\numberwithin{equation}{section}
\theoremstyle{plain}
   \newtheorem{theorem}{Theorem}[section]
   \newtheorem{proposition}[theorem]{Proposition}
   \newtheorem{lemma}[theorem]{Lemma}
   \newtheorem{corollary}[theorem]{Corollary}
\theoremstyle{definition}
   \newtheorem{definition}[theorem]{Definition}
   \newtheorem{example}[theorem]{Example}
\theoremstyle{remark}
   \newtheorem{remark}[theorem]{Remark}
\DeclareMathOperator{\TN}{\mathrm{TN}}
\newcommand{\A}{\mathrm{A}}
\newcommand{\zero}{\hat{0}}
\newcommand{\one}{\hat{1}}
\newcommand{\NN}{\mathbb{N}}
\newcommand{\ZZ}{\mathbb{Z}}
\newcommand{\RR}{\mathbb{R}}
\newcommand{\R}{\mathrm{R}}
\newcommand{\CC}{\mathbb{C}}
\newcommand{\BB}{\mathbb{B}}
\newcommand{\DD}{\mathcal{D}}
\renewcommand{\A}{\mathcal{A}}
\newcommand{\SSS}{\mathcal{S}}
\newcommand{\I}{\mathcal{I}}
\newcommand{\xx}{\mathbf{x}}
\newcommand{\yy}{\mathbf{y}}
\newcommand{\des}{\mathrm{des}}
\newcommand{\col}{\mathrm{col}}
\newcommand{\aug}{\mathrm{aug}}
\title{Chow polynomials of totally nonnegative matrices and posets}
\author{Petter Br\"and\'en}
\author{Lorenzo Vecchi}
\address{Department of Mathematics, KTH Royal Institute of Technology}
\keywords{Totally nonnegative matrix, real-rooted polynomial, P\'olya frequency sequence, Chow polynomial, Chow ring, geometric lattice, binomial poset, Sheffer poset}
\subjclass[2020]{06A07, 15B48, 13D40, 26C10}
\thanks{PB is a Wallenberg Academy Scholar
  supported by the Knut and Alice Wallenberg
  Foundation, and the G\"oran Gustafsson foundation.}
\begin{document}

\begin{abstract}
   Huh-Stevens and Ferroni-Schr\"oter independently conjectured that Hilbert-Poincaré series of Chow rings of  geometric lattices have only real zeros. Ferroni, Matherne and the second author extended this conjecture to Chow polynomials of Cohen-Macaulay poset. In this paper we address the above conjectures by providing new defining relations and properties of Chow functions of posets and matrices. 
    These are used, in conjunction with new techniques on interlacing sequences of polynomials, to prove that Chow polynomials of totally nonnegative matrices have only real zeros, which, in turn, proves the above conjectures for a class of posets  that contains projective and affine geometries, face lattices of cubical polytopes, partition lattices and Dowling lattices, perfect matroid designs, and lattices of flats of paving matroids.  

    We also study Chow polynomials of Toeplitz matrices in greater detail, and show how these are related the combinatorics of binomial and Sheffer posets, as well as to a family of generalized Eulerian polynomials with coefficients in the ring of symmetric polynomials that have been studied  by e.g. Stanley, Brenti, Stembridge and Shareshian-Wachs.
\end{abstract}

\thispagestyle{empty}
\maketitle
\setcounter{tocdepth}{2}
\tableofcontents
\thispagestyle{empty}
\newpage
\section{Introduction}
The study of algebraic invariants of posets and matroids has seen a remarkable development over the past two decades. It has led not only to the solution of long-standing conjectures on positivity in matroid theory, see \cite{eur}, but also opened up new lines of research and led to new positivity questions. For example, the Kazhdan-Lusztig polynomial, the $Z$-polynomial, the Hilbert-Poincaré series of the Chow ring, and the chain polynomial of the lattice of flats of any matroid have been conjectured to have only real zeros \cite{gedeon-proudfoot-young-survey, proudfoot-xu-young, stevens-bachelor, ferroni-schroter, athanasiadis-kalampogia}. To match the complex recursions
satisfied by these polynomials, there is a need for new methods on the geometry of zeros of polynomials that can handle such recursions. Equally important is to reveal new relations and
recursions among these polynomials, ones that match better the type of recursions
that the existing theory on zeros of polynomials can handle. These are the two motivating purposes of the paper.

In their influential work \cite{feichtner-yuzvinsky}, Feichtner and Yuzvinsky introduced the Chow ring of an atomistic lattice $L$, and gave a presentation for it via relations in terms of chains in the lattice. This ring is graded and finite-dimensional, allowing one to study its Hilbert-Poincaré series as an invariant of the lattice. When $L$ is a geometric lattice, then the Chow ring satisfies the K\"ahler package \cite{adiprasito-huh-katz}, which implies that the Hilbert series of the Chow ring of a geometric lattice  is always palindromic and unimodal. Moreover for specific classes of geometric lattices, these Hilbert series coincide with well-known families of polynomials; for example  for boolean algebras one recovers the Eulerian polynomials, and for truncations of boolean algebras one recovers the derangement polynomials \cite{hameister-rao-simpson}. For these reasons, the Hilbert series of Chow rings of geometric lattices (or matroids)  have garnered importance on their own and are now known as the \emph{Chow polynomials} of matroids. A tightly connected construction is the one of augmented Chow ring of $L$ introduced in \cite{semismall}, whose Hilbert series is known as the \emph{augmented Chow polynomial}.

Ferroni-Schr\"oter \cite[Conjecture 8.18]{ferroni-schroter} and Huh-Stevens  \cite[Conjectures 4.1.3 and 4.3.3]{stevens-bachelor} independently   conjectured that Chow polynomials and augmented Chow polynomials of matroids have only real zeros. This was recently verified in the uniform case in  \cite[Theorem 1.1]{branden-vecchi} and  \cite[Theorem 1.10]{ferroni-matherne-stevens-vecchi}, respectively. 
Ferroni, Matherne and the second author extended the definition of Chow polynomials to arbitrary bounded posets using the framework of Kazhdan-Lusztig-Stanley theory \cite{ferroni-matherne-vecchi}. These are palindromic polynomials that are conjectured to have only real zeros for all Cohen-Macaulay posets and Bruhat intervals (with the appropriate $P$-kernels \cite[Conjectures 4.26 and 6.14]{ferroni-matherne-vecchi}). In this larger context, Hoster and Stump \cite[Theorem 1.1]{hosterstump} proved that Chow polynomials of boolean complexes with nonnegative $h$-vectors are real-rooted.

In this paper we develop a systematic approach to the  study of zeros of Chow polynomials for a class of posets called $\TN$-\emph{posets}, which were introduced in \cite{branden-saud-1,branden-saud-2}. This class includes (rank-selected subposets of) affine and projective geometries, perfect matroid designs, dual partition lattices and dual Dowling lattices. To do so, we define and study Chow polynomials of \emph{totally nonnegative  matrices} ($\TN$-matrices), i.e., matrices whose minors are all nonnegative. Our main result is Theorem \ref{mainman2}, which states that the Chow polynomial of any lower triangular $\TN$-matrix with all diagonal entries equal to one is real-rooted. A direct consequence of this is Theorem~\ref{mainposet}, which says that the Chow polynomial of any $\TN$-poset (or dual of any $\TN$-poset) is real-rooted. We also use our methods to prove that the Chow polynomial of any paving matroid is real-rooted, Corollary~\ref{cor:main paving}. \\

\noindent 
{\bf Outline.} The structure of the paper is as follows. 
In Section \ref{section: Chow posets} we review relevant Kazhdan–Lusztig–Stanley theory for posets, and provide new defining relations and properties of Chow functions (Theorem \ref{thm: def H and d}), and augmented Chow functions (Theorem \ref{thm: def G and A}). These relations reveal that a Chow polynomial always comes paired with another polynomial that we call a Chow-derangement polynomial. 
This pairing may be seen as a  generalization of the classical pairing of Eulerian polynomials and derangement polynomials, see e.g. \cite{Brenti-90, branden-solus}. 

 In Section \ref{section chow matrix} we use the machinery developed in Section \ref{section: Chow posets} to define and study (augmented) Chow polynomials of matrices. We relate this construction to (augmented) Chow polynomials of weak-rank uniform posets, a class of highly structured posets. We define and study a notion of $\I_n$-interlacing sequences of polynomials, which refines the notion of interlacing sequences to the case when the zeros of the polynomial in question interlace the zeros of its reciprocal. The new defining relations for Chow functions obtained in Section \ref{section: Chow posets} translate into recursions that preserve $\I_n$-interlacing sequences of polynomials  when combined with resolvability of $\TN$-matrices (Definition \ref{resolv}), thus refining the method using resolvability that was initiated in \cite{branden-saud-1}.  This is used to prove interlacing preserving properties of the so called Chow-deranged map and the Chow-Eulerian transformation associated to a matrix $R$. These are vast generalizations of two results of the first author and Solus \cite{branden-solus}, and Athanasiadis \cite{athanasiadis-eulerian2}, respectively, on real-rootedness preserving properties of the deranged map and the Eulerian transformation. Indeed Theorems \ref{mainman} and \ref{mainmanA} generalize these results to any lower triangular $\TN$-matrix with all diagonal entries equal to one. A direct consequence is Theorem~\ref{mainman2} which states that the Chow polynomials associated to  any such matrix are real-rooted.
 
 In Section \ref{section gamma} we provide an explicit interpretation of the coefficients of the $\gamma$-polynomial of Chow polynomials of matrices as sums of certain minors of the matrix (Corollary \ref{cor: coefficients of gamma as determinants}) and as enumerators of certain non-intersecting paths in a network, following the work by Gessel and Viennot \cite{gessel-viennot}. In Sections~\ref{section real zeros TN posets} and  \ref{pavingsection} we prove Theorem \ref{mainposet} and Theorem \ref{thm:main paving}, which assert that (augmented) Chow polynomials of $\TN$-posets, dual $\TN$-posets and generalized paving posets (a class of posets that contains lattices of flats of paving matroids) are real-rooted. 
 
In Section \ref{section toeplitz}, we study the case of Toeplitz matrices in greater detail, and provide generating function identities for the Chow polynomials of interest in Theorem~\ref{fourform}. In Theorem~\ref{PFSR}, we prove that the Chow polynomials associated to Toeplitz matrices of P\'olya frequency sequences are real-rooted. In  Section \ref{binopo} we provide expressions for the generating series for the various Chow polynomials in the case when  $P$ is a binomial poset or a Sheffer poset. These are classes of  highly regular posets studied by Doubilet, Rota and Stanley \cite{DRS}, and Ehrenborg and Readdy \cite{Ehr-Rea}, respectively. 

 In Section \ref{symsec} we show that the Chow polynomials associated to the Toeplitz matrices  $(e_{i-j}(\xx))_{i,j=0}^\infty$  and $(h_{i-j}(\xx))_{i,j=0}^\infty$ coincide with the generalized Eulerian polynomials, with symmetric function coefficients, that have been studied frequently in the literature, by Stanley, Brenti, Stembridge, Shareshian and Wachs and others, see \cite{Shareshian-Wachs-10} and the references therein. Hence the theory developed in this paper serves as a framework for these polynomials, that, for example, enables us to reprove Schur $\gamma$-positivity results of Gessel and Shareshian-Wachs, as well as real-rootedness for the polynomials obtained when $\xx$ and $\yy$ are chosen to be suitable real and nonnegative vectors.

\section{The incidence algebra and Chow functions}\label{section: Chow posets}
In this section we recall the construction of Chow functions and Chow polynomials of partially ordered sets (posets). For undefined poset terminology, we refer to \cite{stanley-ec1}. Recall that an interval of a poset is a subposet of $P$ of the form $[x,y]= \{z \in P : x\leq z \leq y\}$, where $x,y\in P$. 
All posets $P$ considered in this paper are \emph{locally finite}, i.e., each interval of $P$ is finite. We also say that a poset is \emph{bounded} if it has unique least and largest elements, which we denote by $\zero$ and $\one$, respectively.
Given a poset $P$, a \emph{weak rank function} is a function $\rho: P\times P \to \NN$ such that 
\begin{itemize}
    \item $\rho(x,y) = \rho_{x,y} > 0$ if and only if $x < y$, and
    \item  $\rho_{x,y} = \rho_{x,z} + \rho_{z,y}$, for all  $x\leq z \leq y$ in $P$.
\end{itemize}
A \emph{weakly ranked} poset consists of a pair $(P,\rho)$, where $\rho$ is a weak rank function for $P$. By slight abuse of notation, we say that $P$ is a weakly ranked poset, when this does not create confusion.
If the poset has a least element $\zero$, we write $\rho(x):= \rho_{\zero,x}$ for an element $x \in P$ and call this the \emph{(weak) rank} of $x$ in $P$. If $\rho_{x,y} =1$ whenever $y$ covers $x$, then we say that $P$ is \emph{graded} (or \emph{ranked}). Moreover if $P$ is bounded, then we say that $P$ has (weak) rank $\rho(\one)$. 

 The \emph{incidence algebra}  of a poset $P$ over a ring $R$ is the free $R$-module spanned by the intervals of $P$.
More explicitly, an element $f$ associates to every interval $[x,y]$ of $P$ an element in $R$ which we denote by $f_{x,y}$. The product (\emph{convolution}) is defined by
\[
(fg)_{x,y} = \sum_{x\leq z \leq y}f_{x,z}g_{z,y}.
\]
We denote by $\delta$ the multiplicative identity. In this paper the ring $R$ will be a polynomial ring $R= \R[t]$, where $\R$ is an integral domain\footnote{It is more common to work with the incidence algebra over $\ZZ[t]$, but we need this more general setting in Section \ref{section toeplitz}.}, and we denote by $I(P)=I_{\R[t]}(P)$ the incidence algebra over $\R[t]$. 
For a polynomial $f \in \R[t]$ of degree at most $n$, we define 
\[
\I_n(f) = t^nf(t^{-1})
\]
and say that $f$ is \emph{palindromic with center of symmetry} $n/2$, if ${\I_n(f) = f}$. 
Let $I_\rho(P)$ the subalgebra of $I(P)$ consisting of functions $f$ such that $\deg f_{x,y} \leq \rho_{x,y}$ for all $x,y \in P$, and  define $\I : I_\rho(P) \to I_\rho(P)$ by 
$$
\mathcal I (f)_{x,y} = \mathcal I_{\rho_{x,y}}(f_{x,y}), \quad \mbox{ for all } x\leq y. 
$$
Hence $\I$ is an involution and an automorphism, i.e., $\I^2=\delta$ and  $\I(fg)=\I(f)\I(g)$ for all $f,g \in I_\rho(P)$. 

Given a weakly ranked poset $P$, an element $\kappa$ in  $I_\rho(P)$ is called a \emph{$P$-kernel} if
\begin{itemize}
    \item $\kappa_{x,x} = 1$ for each $x \in P$, and 
    \item $\kappa^{-1} = \mathcal I(\kappa)$.
\end{itemize}

The following theorem gives a characterization of $P$-kernels. 

\begin{theorem}[{\cite[Theorem~6.2]{brenti-kls}}, {\cite[Proposition 2.5]{proudfoot-kls}}]
    Given a $P$-kernel $\kappa$, there exists a unique element $g\in I(P)$ such that
    \begin{equation}\label{condition for kls}
    g_{x,x}=1 \mbox{ for each } x \in P, \ \ \mbox{ and }  \ \ \deg g_{x,y}< {\rho_{x,y}}/{2} \text{ for every $x < y$ in $P$},
    \end{equation}
and $\kappa = g^{-1}\mathcal I (g)$. Conversely, if $g \in I (P)$ satisfies \eqref{condition for kls}, then $\kappa = g^{-1}\I (g)$ is a $P$-kernel.
\end{theorem}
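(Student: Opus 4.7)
The plan is to construct $g$ recursively from $\kappa$ by induction on $n := \rho_{x,y}$. Setting $g_{x,x}=1$ for all $x$, the defining equation $\kappa g = \mathcal{I}(g)$ on the interval $[x,y]$ reads
\[
g_{x,y} + \sum_{x<z\le y}\kappa_{x,z}\,g_{z,y} \;=\; \mathcal{I}_n(g_{x,y}),
\]
which I rewrite as $\mathcal{I}_n(g_{x,y})-g_{x,y}=s_{x,y}$, where $s_{x,y}:=\sum_{x<z\le y}\kappa_{x,z}\,g_{z,y}$ is determined by the inductive data. The first step is the linear-algebraic observation that, given a polynomial $s$ of degree at most $n$, the equation $\mathcal{I}_n(g)-g=s$ admits a (necessarily unique) solution $g$ with $\deg g<n/2$ if and only if $s$ is \emph{antipalindromic}, i.e.\ $\mathcal{I}_n(s)=-s$. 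This reduces the existence and uniqueness of $g$ to verifying that each $s_{x,y}$ is antipalindromic.

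The main obstacle — and heart of the proof — is precisely this antipalindromicity. Using that $\deg\kappa_{x,z}\le\rho_{x,z}$ and (inductively) $\deg g_{z,y}\le\rho_{z,y}$ with $\rho_{x,z}+\rho_{z,y}=n$, one has
\[
\mathcal{I}_n(\kappa_{x,z}\,g_{z,y})=\mathcal{I}_{\rho_{x,z}}(\kappa_{x,z})\,\mathcal{I}_{\rho_{z,y}}(g_{z,y})=\mathcal{I}(\kappa)_{x,z}\,\mathcal{I}(g)_{z,y}=\kappa^{-1}_{x,z}\,\mathcal{I}(g)_{z,y},
\]
where the last equality invokes the $P$-kernel identity $\mathcal{I}(\kappa)=\kappa^{-1}$. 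For $z>x$ we have $\rho_{z,y}<n$, so the inductive hypothesis gives $\mathcal{I}(g)_{z,y}=(\kappa g)_{z,y}=\sum_{z\le w\le y}\kappa_{z,w}\,g_{w,y}$. Substituting, switching the order of summation over $w$, and peeling off the $z=x$ term yields
\[
\mathcal{I}_n(s_{x,y})=\sum_{x<w\le y}g_{w,y}\!\!\sum_{x<z\le w}\kappa^{-1}_{x,z}\kappa_{z,w}=\sum_{x<w\le y}g_{w,y}\bigl(\delta_{x,w}-\kappa_{x,w}\bigr)=-s_{x,y},
\]
where I have used $(\kappa^{-1}\kappa)_{x,w}=\delta_{x,w}=0$ since $x<w$. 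This closes the induction and establishes the forward direction.

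For the converse, given $g$ satisfying \eqref{condition for kls}, define $\kappa:=g^{-1}\mathcal{I}(g)$. A routine induction on $\rho_{x,y}$ shows $g^{-1}\in I_\rho(P)$, and since $\mathcal{I}(g)\in I_\rho(P)$, convolution degrees add to at most $\rho_{x,y}$, so $\kappa\in I_\rho(P)$. The normalization $\kappa_{x,x}=1$ is immediate. Finally, because $\mathcal{I}$ is an involutive automorphism of $I_\rho(P)$, one computes
\[
\mathcal{I}(\kappa)=\mathcal{I}(g^{-1})\,\mathcal{I}(\mathcal{I}(g))=\mathcal{I}(g)^{-1}g=\bigl(g^{-1}\mathcal{I}(g)\bigr)^{-1}=\kappa^{-1},
\]
so $\kappa$ is a $P$-kernel. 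The critical difficulty, as noted, lies entirely in the antipalindromic identity $\mathcal{I}_n(s_{x,y})=-s_{x,y}$; once that is in hand, both directions are essentially formal.
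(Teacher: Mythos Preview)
The paper does not supply its own proof of this theorem; it is quoted from \cite{brenti-kls} and \cite{proudfoot-kls} and stated without argument, so there is nothing in the paper to compare your attempt against.

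Your argument is the standard one and is correct in substance, but in the forward direction you have reversed the order of multiplication. The statement asks for $\kappa = g^{-1}\mathcal{I}(g)$, which is equivalent to $g\kappa = \mathcal{I}(g)$; you instead work from $\kappa g = \mathcal{I}(g)$, i.e.\ $\kappa = \mathcal{I}(g)g^{-1}$. These are not the same in the (noncommutative) incidence algebra, so as written you are constructing the \emph{right} KLS function rather than the left one. The fix is purely mechanical: separate the $z=y$ term from $(g\kappa)_{x,y}=\sum_{x\le z\le y}g_{x,z}\kappa_{z,y}$ to get
\[
\mathcal{I}_n(g_{x,y})-g_{x,y}=s'_{x,y}:=\sum_{x\le z<y}g_{x,z}\,\kappa_{z,y},
\]
and then run your antipalindromicity computation with the roles swapped: apply $\mathcal{I}(\kappa)=\kappa^{-1}$ to the right factor and the inductive identity $\mathcal{I}(g)_{x,z}=(g\kappa)_{x,z}$ (valid since $\rho_{x,z}<n$) to the left factor. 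The double sum then collapses via $\sum_{w\le z<y}\kappa_{w,z}\,\kappa^{-1}_{z,y}=-\kappa_{w,y}$ for $w<y$, exactly mirroring your version. With this correction your forward direction is complete; your converse direction already uses the correct definition $\kappa=g^{-1}\mathcal{I}(g)$ and is fine as written.
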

The function $g$ is called the \emph{left Kazhdan-Lusztig-Stanley (KLS) function} associated to $\kappa$. 

Following \cite[Definition 3.2]{ferroni-matherne-vecchi}, we define the \emph{reduced $P$-kernel} to be the function $\overline{\kappa} \in I_\rho(P)$ defined by
\[
\overline{\kappa} = \frac 1 {t-1} (\kappa-t\delta). 
\]
Notice that $\kappa-t\delta$ evaluated at $t=1$ is equal to $g(1)^{-1}g(1)-\delta=0$, and hence $t-1$ divides $\kappa-t\delta$ and thus $\overline{\kappa}$ is an element of $I_\rho(P)$ as claimed. The \emph{Chow function} associated to $\kappa$ (or $g$) is  defined as the element of $I_\rho(P)$, 
\begin{equation}\label{Chow-def}
    (-\overline{\kappa})^{-1},
\end{equation}
i.e., the negative inverse of the reduced $P$-kernel. 

Define a family of operators $\SSS_n$, $n \in \NN$, on polynomials of degree at most $n$ in $\R[t]$ by 
\[
\SSS_n(f )= \frac{\I_n(f) - f}{t-1}.
\]
The following theorem provides an alternative definition of the Chow function.
\begin{theorem}\label{thm: def H and d}
    Let $g \in I(P)$ be an element that satisfies \eqref{condition for kls}. Then there exist unique functions $H$ and $d$ in $I_\rho(P)$ for which
    \begin{itemize}
        \item[(i)] $d_{x,x} = 1$ for each $x \in P$,
        \item[(ii)] $\I(H) = tH + (1-t)\delta$,
        \item[(iii)] $\I(d) = d$, and 
        \item[(iv)] $H = dg$. 
    \end{itemize}
    Moreover, $H$ is the Chow function \eqref{Chow-def} associated to $g$, and the polynomials $d_{x,y}$ satisfy the recursion 
    \begin{equation}\label{d-recu}
d_{x,y} = t\SSS_{\rho_{x,y}-1}\left(\sum_{x \leq z <y}d_{x,z}g_{z,y}\right),  \quad x<y. 
\end{equation}
\end{theorem}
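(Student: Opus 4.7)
My plan is to construct $d$ via the recursion \eqref{d-recu}, set $H:=dg$, verify properties (i)--(iv) in turn, and identify $H$ with the Chow function by a right-inverse argument; uniqueness then follows by deriving \eqref{d-recu} from (i)--(iv) and running the recursion. The recurring subtlety is degree bookkeeping so that each application of $\I_n$ or $\SSS_n$ sees a polynomial of the appropriate degree, but the other steps are short algebraic manipulations.

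First, set $d_{x,x}:=1$ and for $x<y$ define $d_{x,y}$ by \eqref{d-recu}, inductively on $\rho_{x,y}$. A routine induction combining the inductive bound $\deg d_{x,z}\le\rho_{x,z}$ with the hypothesis $\deg g_{z,y}<\rho_{z,y}/2$ yields $\deg F_{x,y}\le\rho_{x,y}-1$, where $F_{x,y}:=\sum_{x\le z<y}d_{x,z}g_{z,y}$, so $\SSS_{\rho_{x,y}-1}(F_{x,y})$ is well-defined and has degree at most $\rho_{x,y}-2$; in particular $d\in I_\rho(P)$. Property (iii) reduces to the identity $\I_n(t\,\SSS_{n-1}(f))=t\,\SSS_{n-1}(f)$ for $\deg f\le n-1$, which follows by rewriting $t\,\SSS_{n-1}(f)=(\I_n(f)-tf)/(t-1)$ via $\I_n(f)=t\,\I_{n-1}(f)$ and checking palindromicity with center $n/2$ directly. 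Now define $H:=dg$, so (iv) is automatic and $H_{x,x}=1$. For (ii), decompose $H_{x,y}=d_{x,y}+F_{x,y}$ for $x<y$, apply $\I_n$ with $n:=\rho_{x,y}$, and use $\I_n(d_{x,y})=d_{x,y}$ (from (iii)) together with $\I_n(F_{x,y})=t\,\I_{n-1}(F_{x,y})$ (from the degree bound on $F_{x,y}$); substituting the recursion in the rearranged form $t\,\I_{n-1}(F_{x,y})=(t-1)d_{x,y}+tF_{x,y}$ collapses everything to $\I_n(H_{x,y})=tH_{x,y}$, which is (ii).

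To identify $H$ with the Chow function, apply $\I$ to $H=dg$ and use (iii) together with $\I(g)=g\kappa$ (the rearranged form of $\kappa=g^{-1}\I(g)$) to conclude $\I(H)=H\kappa$. Combining with (ii) gives $H\kappa=tH+(1-t)\delta$, equivalently $H\cdot(-\overline\kappa)=\delta$; since $(-\overline\kappa)_{x,x}=1$ makes $-\overline\kappa$ a unit of $I(P)$, the right inverse $H$ coincides with the unique two-sided inverse $(-\overline\kappa)^{-1}$. For uniqueness, the derivation of (ii) just given is reversible: if $(H,d)$ satisfies (i)--(iv), decomposing $H_{x,y}=d_{x,y}+F_{x,y}$ and applying $\I_n$ using (ii) and (iii) forces $(t-1)d_{x,y}=t(\I_{n-1}(F_{x,y})-F_{x,y})$, i.e., \eqref{d-recu}. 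Together with $d_{x,x}=1$ this determines $d$ uniquely by induction on $\rho_{x,y}$, and hence $H=dg$ is unique as well. The principal obstacle is the degree bookkeeping tied to the strict inequality $\deg g_{z,y}<\rho_{z,y}/2$; once that is in place, every identity reduces to a short computation.
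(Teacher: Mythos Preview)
Your proof is correct but proceeds in the opposite direction from the paper. The paper takes $H$ to be the Chow function $(-\overline\kappa)^{-1}$ from the outset, sets $d:=Hg^{-1}$, and then verifies (ii) by a short computation in the incidence algebra over $\R(t)$ using $H=(1-t)(\kappa-t\delta)^{-1}$, and (iii) via the identity $H\kappa=\I(H)$; the recursion \eqref{d-recu} is obtained only afterwards, by applying $\SSS_{\rho_{x,y}-1}$ to $H_{x,y}=d_{x,y}+F_{x,y}$ and using $\SSS_{n-1}(H_{x,y})=0$, $\SSS_{n-1}(d_{x,y})=-d_{x,y}/t$. You instead build $d$ bottom-up from the recursion, define $H:=dg$, check (ii) and (iii) by direct degree calculations, and only then identify $H$ with $(-\overline\kappa)^{-1}$ via $H(-\overline\kappa)=\delta$.

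The paper's route is shorter because it leverages the closed formula for $H$ and avoids any degree bookkeeping. Your route is more self-contained: it never invokes the incidence algebra over the fraction field $\R(t)$ and makes the recursion the primary object, at the price of having to track degrees carefully (which you do correctly). The two arguments meet at the uniqueness step, which both prove the same way.
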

\begin{proof}
For the existence, let $H$ be the Chow function associated to $g$ and let ${d= Hg^{-1}}$, so that (i) and (iv) are trivially satisfied. 
Notice that in the incidence algebra over the field $\R(t)$, 
$$
H = (1-t)(\kappa-t\delta)^{-1}. 
$$
A simple calculation in this incidence algebra  yields
\begin{align*}
\I(H) &= \I((1-t)(\kappa-t\delta)^{-1})= (1-1/t)(\I(\kappa)-t^{-1}\delta )^{-1} \\
&= (1-t)\kappa (\kappa-t\delta)^{-1}= (1-t)\delta+t H, 
\end{align*}
which verifies (ii). Also 
\begin{equation*}
H\kappa = (1-t)(\kappa-t\delta)^{-1} (\kappa-t\delta+t\delta) = (1-t)\delta+tH=\I(H). 
\end{equation*}
Recall $\kappa= g^{-1}\I(g)$. Then 
$$
\I(d)=\I(H)\I(g^{-1})=H\kappa \I(g^{-1})= Hg^{-1}\I(g)\I(g^{-1})= Hg^{-1}=d,  
$$
which verifies (iii) and completes the proof of the existence of $(H,d)$. 

For the uniqueness, we only need to prove that $d$ is unique. The uniqueness of $H$ then follows from $H = dg$. To prove the uniqueness by induction on $\rho_{x,y}$, it remains to prove \eqref{d-recu}. Suppose $\rho_{x,y} > 0$. By (iv), 
\begin{equation}\label{lalal}
H_{x,y} = d_{x,y} + \sum_{x \leq z <y}d_{x,z}g_{z,y}. 
\end{equation}
By (ii) and (iii), $\SSS_{\rho_{x,y}-1}(H_{x,y})=0$ and $\SSS_{\rho_{x,y}-1}(d_{x,y})= -d_{x,y}/t$. Applying $\SSS_{\rho_{x,y}-1}$ to \eqref{lalal} now yields \eqref{d-recu}. 
\end{proof}
We call the unique function $d$ achieved by Theorem \ref{thm: def H and d} the \emph{Chow-derangement function} associated to $g$. If the poset is bounded, we call $H_P = H_{\zero,\one}$ the \emph{Chow polynomial} of $P$ (with respect to $g$) and $d_P= d_{\zero,\one}$ the \emph{Chow-derangement polynomial} of $P$.

Later in the paper, we will need the following slightly weaker version of Theorem~\ref{thm: def H and d}.
\begin{corollary}\label{cor: H and d for initial intervals}
    Let $P$ be a poset with a unique least element $\zero$ and $g \in I(P)$ be a function satisfying \eqref{condition for kls}. Then, there are two  unique  families of polynomials $\{H_x\}_{x\in P}$ and $\{d_x\}_{x\in P}$ in $\R[t]$ that satisfy 
    \begin{itemize}
        \item[(i)] $d_{\zero} = 1$,
        \item[(ii)] $\I_{\rho_x}(H_x) = tH_x$, for each $x>\zero$, 
        \item[(iii)] $\I_{\rho_x}(d_x) = d_x$, for each $x \in P$, and 
        \item[(iv)] $H_y = \sum_{x \leq y}d_xg_{x,y}$ for each $y \in P$.
    \end{itemize}
    Moreover $H_x = H_{\zero,x}$ and $d_x= d_{\zero, x}$ for each $x \in P$, and 
      \begin{equation}\label{d-recu-2}
d_{y} = t\SSS_{\rho(y)-1}\left(\sum_{x<y}d_{x}g_{x,y}\right),  \quad y \neq \zero.
\end{equation}
\end{corollary}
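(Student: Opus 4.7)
The plan is to treat this corollary as a direct specialization of Theorem~\ref{thm: def H and d} to initial intervals $[\zero, x]$, carrying out both existence and uniqueness from the already-proved theorem.

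For existence, I would define $H_x := H_{\zero,x}$ and $d_x := d_{\zero,x}$, where $H$ and $d$ are the Chow and Chow-derangement functions produced by Theorem~\ref{thm: def H and d}. Property (i) is then immediate from part (i) of the theorem. For (ii), note that the theorem's relation $\I(H) = tH + (1-t)\delta$ evaluated at the interval $[\zero,x]$ with $x > \zero$ gives $\I_{\rho_x}(H_{\zero,x}) = tH_{\zero,x}$, since $\delta_{\zero,x} = 0$. Property (iii) is just part (iii) of the theorem evaluated at $[\zero,x]$, and (iv) is $H = dg$ evaluated at $[\zero,y]$, since the convolution sum $\sum_{\zero \le x \le y} d_{\zero,x} g_{x,y}$ is exactly $\sum_{x \le y} d_x g_{x,y}$.

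For uniqueness, I would proceed by induction on $\rho(y)$. The base case $y = \zero$ forces $d_{\zero} = 1$ by (i), and then (iv) gives $H_{\zero} = d_{\zero} g_{\zero,\zero} = 1$. For the inductive step, assume $d_x$ has been uniquely determined for every $x < y$. Rewrite (iv) as
\[
H_y = d_y + \sum_{x < y} d_x g_{x,y}.
\]
Property (ii) implies $H_y$ is palindromic with center of symmetry $(\rho(y)-1)/2$, so $\SSS_{\rho(y)-1}(H_y) = 0$; property (iii), together with the bound $\deg d_y < \rho(y)/2$ inherited through the uniqueness argument (or argued via the leading coefficient of $\I_{\rho_y}(H_y) = tH_y$, which forces $\deg H_y \le \rho(y)-1$, and the corresponding degree bound for $d_y$), gives $\SSS_{\rho(y)-1}(d_y) = -d_y/t$. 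Applying $\SSS_{\rho(y)-1}$ to the displayed equation then yields exactly
\[
d_y = t\,\SSS_{\rho(y)-1}\!\left(\sum_{x<y} d_x g_{x,y}\right),
\]
which is \eqref{d-recu-2} and determines $d_y$ uniquely from the inductive data. Uniqueness of $H_y$ then follows from (iv). Along the way we have also proved the recursion \eqref{d-recu-2}, completing the statement.

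The only mild subtlety, and the place that deserves care in writing, is the bookkeeping between $\I_{\rho(y)}$ and $\SSS_{\rho(y)-1}$: one must check that $H_y$ has degree at most $\rho(y)-1$ (so that $\I_{\rho(y)-1}$ makes sense and agrees appropriately with $\I_{\rho(y)}$), which follows because $\I_{\rho(y)}(H_y) = tH_y$ forces the leading coefficient in degree $\rho(y)$ to vanish. Once this is clear, the computations $\SSS_{\rho(y)-1}(H_y) = 0$ and $\SSS_{\rho(y)-1}(d_y) = -d_y/t$ go through verbatim as in the proof of Theorem~\ref{thm: def H and d}, and the corollary follows with no new idea beyond restricting attention to initial intervals.
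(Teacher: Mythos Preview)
Your proposal is correct and follows essentially the same approach as the paper: existence by restricting the functions from Theorem~\ref{thm: def H and d} to initial intervals, and uniqueness by deriving the recursion \eqref{d-recu-2} from (ii)--(iv) exactly as in the proof of that theorem. One small slip: the bound ``$\deg d_y < \rho(y)/2$'' is not available (you may be thinking of $g$), but your parenthetical alternative --- using $\I_{\rho(y)}(H_y)=tH_y$ to get $\deg H_y\le\rho(y)-1$, hence $\deg d_y\le\rho(y)-1$ from (iv) and induction --- is the correct justification for applying $\SSS_{\rho(y)-1}$ and obtaining $\SSS_{\rho(y)-1}(d_y)=-d_y/t$.
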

\begin{proof}
The existence follows directly from Theorem \ref{thm: def H and d}. As in the proof of the uniqueness in Theorem \ref{thm: def H and d}, (ii)--(iv) yields \eqref{d-recu-2}, which also proves the uniqueness. 
\end{proof}

In this paper we will be mostly concerned with a special class of functions $g$, where $g_{x,y} \in \R$ for all $x\leq y$ and $g_{x,x} = 1$ for each $x \in P$. We call such functions \emph{scalar}. An example of a scalar function is $\zeta$, the \emph{zeta function} of $P$, which is defined by $\zeta_{x,y} =1$ for all $x \leq y$ in $P$. 

\begin{example}
    When $g = \zeta$, the corresponding $P$-kernel is the \emph{characteristic function} $\chi$, which associates to each interval its characteristic polynomial. Hence, the associated Chow function $H$ was given the name of \emph{characteristic Chow function} in \cite{ferroni-matherne-vecchi}. The polynomials $H_P$ are $\gamma$-positive for Cohen-Macaulay posets \cite[Theorem~1.4]{ferroni-matherne-vecchi}, and are conjectured to be real-rooted for every Cohen-Macaulay poset \cite[Conjecture~1.5]{ferroni-matherne-vecchi}.
\end{example}

\begin{example}\label{H-Eul-ex}
    When $P$ is a boolean algebra of rank $r$ and $g = \zeta$, then all intervals of the same rank in $P$ are isomorphic, from which it follows that $H_{x}= H_{\rho(x)}$ and $d_{x}= d_{\rho(x)}$ for some polynomials $\{H_n\}_{n=0}^r$ and $\{d_n\}_{n=0}^r$. Corollary \ref{cor: H and d for initial intervals} then says that these two families of polynomials are the unique ones that satisfy $H_0=d_0=1$, 
    \[
    H_n = \sum_{k=0}^n\binom{n}{k}d_k, \quad 0 \leq n \leq N, 
    \]
   $\I_n(d_n)= d_n$ and $\I_{n}(H_n)=tH_n$ for $1\leq n \leq N$. These properties are known to hold for the \emph{derangement polynomials} 
$$
d_n = \sum_{\sigma \in \mathfrak{D}_n }t^{ \mathrm{exc}(\sigma) }, \quad \quad \mathrm{exc}(\sigma)=|\{i : \sigma(i)>i\}|, 
$$
which count \emph{derangements} (fixed point free permutations) in the symmetric group $\mathfrak{S}_n$ by the number of \emph{excedances}, and the \emph{Eulerian polynomials} 
$$
A_n = \sum_{\sigma \in \mathfrak{S}_n }t^{ \mathrm{exc}(\sigma) }, 
$$
  see \cite[Corollary 1]{Brenti-90}. Hence we deduce that $d_x$ and $H_x$ are the traditional derangement polynomials and Eulerian polynomials, respectively. 
\end{example}

One would like an explicit interpretation of the function $d$ in terms of the $H$ polynomials. We will use the following non-recursive formula for $H$.
\begin{theorem}{\cite[Theorem 3.10]{ferroni-matherne-vecchi}}\label{thm:H from chains}
    Let $P$ be a weakly ranked poset and suppose $g \in I(P)$ satisfies \eqref{condition for kls}. For $x \leq y$ in $P$, 
    \[
    H_{x,y} = \sum_{x=z_0<z_1<\cdots <z_m \leq y} g_{z_m,y}\prod_{i=1}^{m} \left( \frac{\I_{\rho_{z_{i-1},z_i}}(g_{z_{i-1},z_i}) - t g_{z_{i-1},z_i}}{t-1}\right). 
    \]
   or, alternatively,
   \[
   H_{x,y} = \sum_{x=z_0<z_1<\cdots < z_m < y} \frac{\I_{\rho_{z_m,y}} (g_{z_m,y}) - g_{z_m,y}}{t-1}\prod_{i=1}^{m} \left( \frac{\I_{\rho_{z_{i-1},z_i}} (g_{z_{i-1},z_i}) - t g_{z_{i-1},z_i}}{t-1}\right).
   \]
\end{theorem}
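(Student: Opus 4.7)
The plan is to express $H$ as a geometric series in a strictly off-diagonal element of the incidence algebra, and then read off the two chain formulas by expanding the iterated convolutions. The starting point is the identity
\[
H = (1-t)(\kappa - t\delta)^{-1},
\]
which is established in the proof of Theorem~\ref{thm: def H and d} (from $H\kappa = (1-t)\delta + tH$). Substituting $\kappa = g^{-1}\I(g)$ rewrites this as
\[
H = (1-t)\bigl(\I(g) - tg\bigr)^{-1}\, g.
\]

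Set $B := \I(g) - tg$, so that $B_{x,x} = 1 - t$ and $B_{x,y} = \I_{\rho_{x,y}}(g_{x,y}) - tg_{x,y}$ for $x<y$. Factor $B = (1-t)(\delta - \hat B)$ where $\hat B_{x,x} = 0$ and
\[
\hat B_{x,y} = \eta_{x,y} := \frac{\I_{\rho_{x,y}}(g_{x,y}) - tg_{x,y}}{t-1}, \qquad x<y.
\]
Because intervals of $P$ are finite and $\hat B$ vanishes on the diagonal, the geometric series $(\delta - \hat B)^{-1} = \sum_{n\geq 0} \hat B^n$ is a finite sum on each interval, and thus defines an element of $I(P)$; no passage to the fraction field $\R(t)$ is required, since the factors $(1-t)$ and $(1-t)^{-1}$ cancel. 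Hence
\[
H = (\delta - \hat B)^{-1}\, g = \sum_{n \geq 0} \hat B^n g.
\]
Expanding the iterated convolution gives
\[
(\hat B^n g)_{x,y} = \sum_{x=z_0 < z_1 < \cdots < z_n \leq y} g_{z_n,y}\prod_{i=1}^{n} \eta_{z_{i-1}, z_i},
\]
and summing over $n\geq 0$ produces the first formula.

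For the second formula, introduce $C \in I(P)$ defined by $C_{x,x} = 0$ and
\[
C_{x,y} = \frac{\I_{\rho_{x,y}}(g_{x,y}) - g_{x,y}}{t-1}, \qquad x<y.
\]
A direct comparison of entries shows $C = \hat B + g - \delta$, and consequently
\[
(\delta - \hat B)^{-1} C \;=\; (\delta - \hat B)^{-1}\bigl(-(\delta - \hat B) + g\bigr) \;=\; -\delta + H.
\]
For $x<y$ the left-hand side equals $H_{x,y}$, and expanding the convolution as a sum over chains $x=z_0<\cdots<z_m$ with $z_m<y$ and weight $C_{z_m,y}\prod_i \eta_{z_{i-1},z_i}$ yields the second formula. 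The only subtle point is the diagonal correction in $C = \hat B + g - \delta$; once this identity is verified, both formulas drop out mechanically from the two convolutional expansions of $H$ and of $(\delta - \hat B)^{-1} C$.
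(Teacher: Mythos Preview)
The paper does not include its own proof of this theorem; it is quoted with a citation to \cite[Theorem~3.10]{ferroni-matherne-vecchi} and used as input for the subsequent corollary. So there is no in-paper argument to compare against.

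Your proof is correct and self-contained. The key algebraic moves---rewriting $H=(1-t)(\kappa-t\delta)^{-1}$ as $(\delta-\hat B)^{-1}g$ with the strictly off-diagonal $\hat B$, and then the identity $C=\hat B+g-\delta$ yielding $(\delta-\hat B)^{-1}C=-\delta+H$---are all verified by direct entrywise computation, and the cancellation of the $(1-t)$ factors together with the observation that $(t-1)$ divides $\I_{\rho_{x,y}}(g_{x,y})-tg_{x,y}$ keeps everything inside $I_\rho(P)$ over $\R[t]$, as you note. The geometric-series expansion then reads off both chain formulas mechanically. This is exactly the sort of argument one would expect for a result of this type, and it is entirely adequate as a replacement for the cited proof.
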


When $g$ is scalar, this simplifies to the following result. 
\begin{corollary}\label{cor:H from chains}
    Suppose $g \in I(P)$ is scalar and $x\leq y$ in $P$. Then
    \begin{equation}\label{eq:def H up to top}
    H_{x,y} = \sum_{x=z_0<z_1<\cdots <z_m \leq y} g_{z_m,y} t^m\prod_{i=1}^{m} \left(g_{z_{i-1},z_i} \frac{t^{\rho_{z_{i-1},z_i}-1}-1}{t-1}\right)
    \end{equation}
    or, alternatively,
    \begin{equation}\label{eq: def H not up to top}
    H_{x,y} = \sum_{x=z_0<z_1<\cdots <z_m < y} g_{z_m,y} \frac{t^{\rho_{z_m,y}} -1}{t-1}t^m\prod_{i=1}^{m} \left(g_{z_{i-1},z_i}\frac{t^{\rho_{z_{i-1},z_i}-1}-1}{t-1}\right).
    \end{equation}
\end{corollary}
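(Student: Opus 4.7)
The plan is to derive both formulas as direct specializations of Theorem~\ref{thm:H from chains} to the scalar case. The key observation is the action of the involution $\I_n$ on constants: since $\I_n(f) = t^n f(t^{-1})$, for any $c \in \R$ viewed as a polynomial of degree $0$ (hence of degree $\leq n$) we have
$$
\I_n(c) = t^n c (t^{-1})^0 = ct^n.
$$
Applying this to each scalar $g_{z_{i-1},z_i}$ is the only nontrivial input.

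First I would simplify the factor appearing in the product of both formulas of Theorem~\ref{thm:H from chains}. Since $g$ is scalar,
$$
\frac{\I_{\rho_{z_{i-1},z_i}}(g_{z_{i-1},z_i}) - t g_{z_{i-1},z_i}}{t-1} = g_{z_{i-1},z_i}\cdot\frac{t^{\rho_{z_{i-1},z_i}} - t}{t-1} = t\cdot g_{z_{i-1},z_i}\cdot\frac{t^{\rho_{z_{i-1},z_i}-1} - 1}{t-1}.
$$
Taking the product over $i = 1, \ldots, m$ pulls out a factor of $t^m$, which is precisely the $t^m$ appearing in both \eqref{eq:def H up to top} and \eqref{eq: def H not up to top}.

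For the first formula, no further manipulation is needed: the leading factor $g_{z_m,y}$ in Theorem~\ref{thm:H from chains} is already a scalar, and substituting the simplification above into the first expression of Theorem~\ref{thm:H from chains} immediately yields \eqref{eq:def H up to top}. For the second formula, I would additionally simplify the prefactor
$$
\frac{\I_{\rho_{z_m,y}}(g_{z_m,y}) - g_{z_m,y}}{t-1} = g_{z_m,y}\cdot \frac{t^{\rho_{z_m,y}} - 1}{t-1},
$$
again using $\I_{\rho_{z_m,y}}(g_{z_m,y}) = t^{\rho_{z_m,y}} g_{z_m,y}$. Substituting this together with the earlier simplification into the second expression of Theorem~\ref{thm:H from chains} gives \eqref{eq: def H not up to top}.

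There is no serious obstacle here: the corollary is a direct transcription of Theorem~\ref{thm:H from chains} to the scalar setting, and the only computation that needs care is recognizing how $\I_n$ acts on a constant. Everything else is algebra.
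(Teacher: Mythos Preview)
Your proposal is correct and follows exactly the same approach as the paper: the paper's entire proof is the single observation that $\I_{\rho_{x,y}}(g_{x,y}) = g_{x,y}t^{\rho_{x,y}}$ when $g$ is scalar, and you have simply spelled out the resulting algebra in more detail.
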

\begin{proof}
    We just observe that under these hypotheses $\I_{\rho_{x,y}}(g_{x,y}) = g_{x,y}t^{\rho_{x,y}}$.
\end{proof}

We provide an interpretation for $d$ in the case of scalar $g$. For a weakly ranked and bounded poset $P$ of rank $n$, we denote by $\tau(P)$ the \emph{truncation} of $P$, obtained from $P$ by removing all elements of rank $n-1$ and by setting the new rank of $\one$ to be equal to $n-1$. We keep denoting the rank function in the truncation by $\rho$ to not overload the notation.
If $g \in I(P)$ is scalar\footnote{The reason why we restrict to scalar functions $g$ is to avoid having degrees that are too large for intervals of $\tau(P)$. One could extend the same construction to a slightly larger set of functions on $I(P)$ but this goes outside the scope of this paper.}, then the function $g$ restricted to intervals of $\tau(P)$ is still a function satisfying \eqref{condition for kls}. In particular one can define Chow functions with respect to $g$ also in $\tau(P)$.
\begin{theorem}\label{thm: d as truncation}
    Suppose $g \in I(P)$ is scalar. Then
    \[
    d_{x,y} = \begin{cases}
        1, & \mbox{ if }  \rho_{x,y} = 0, \\
        0, & \mbox{ if }  \rho_{x,y} = 1, \mbox{ and } \\
        tH_{\tau[x,y]}, & \mbox{ if } \rho_{x,y} > 1.
    \end{cases}
    \]
    or, equivalently,
    \[
    H_{x,y} = g_{x,y} + t\sum_{\substack{x\leq z \leq y \\ \rho_{x,z} \geq 2}}H_{\tau[x,z]}g_{z,y}, \quad \mbox{ for all } x\leq y.
    \]
\end{theorem}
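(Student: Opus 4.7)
To prove Theorem~\ref{thm: d as truncation}, I would prove the equivalent formulation for $H$ first and deduce the closed form for $d$ from it. The two displayed identities are equivalent assuming $H = dg$ (Theorem~\ref{thm: def H and d}(iv)): expanding $H_{x,y}=\sum_{x\leq z\leq y}d_{x,z}g_{z,y}$ for $x<y$ gives $H_{x,y}=d_{x,y}+g_{x,y}+\sum_{x<z<y}d_{x,z}g_{z,y}$, and then an induction on $\rho_{x,y}$ (with base cases $\rho_{x,y}\in\{0,1\}$ immediate) trades the piecewise formula for $d$ for the displayed identity for $H$, and vice versa. So the task reduces to verifying the second displayed identity.

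To prove that identity, my plan is to apply the chain expansion \eqref{eq:def H up to top} to $H_{x,y}$ and group chains $x=z_0<\cdots<z_m\leq y$ by their top element $z_m=z$. The empty chain ($m=0$) contributes exactly $g_{x,y}$. For each fixed $z$ with $x<z\leq y$, the total contribution of length-$m\geq 1$ chains ending at $z$ is $g_{z,y}$ times a sum that vanishes when $\rho_{x,z}=1$ (the factor $\tfrac{t^0-1}{t-1}=0$ kills it), while for $\rho_{x,z}\geq 2$ I claim it equals $tH_{\tau[x,z]}$. To identify it, I would isolate the factor with $i=m$, relabel $w_i=z_i$ for $i\leq m-1$, and factor $t^m=t\cdot t^{m-1}$; the remaining sum runs over chains $x=w_0<\cdots<w_{m-1}<z$ with $\rho_{w_{m-1},z}\geq 2$ (others contribute $0$), which forces every $w_i$ into $\tau[x,z]$. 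Using the rank-shift $\rho_\tau(w_{m-1},z)=\rho_{w_{m-1},z}-1$ to rewrite the last factor, the remaining sum matches \eqref{eq: def H not up to top} applied inside $\tau[x,z]$, yielding $H_{\tau[x,z]}$.

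The main bookkeeping point---and likely the only subtle part---is verifying that the rank-shift induced by truncation affects only the factor corresponding to the top element of each chain: no intermediate element of a nonvanishing chain can have the deleted rank $\rho_{x,z}-1$, since strict monotonicity forces $\rho(x,w_i)<\rho(x,w_{m-1})\leq\rho_{x,z}-2$ for $i<m-1$. Once this is checked, the chain decomposition lines up precisely with \eqref{eq: def H not up to top} applied to $\tau[x,z]$, completing the proof of the $H$-identity, and hence of the theorem.
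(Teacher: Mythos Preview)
Your proposal is correct and follows essentially the same approach as the paper: both arguments establish the $H$-identity by expanding $H_{x,y}$ via the chain formula \eqref{eq:def H up to top}, grouping the chains by their top element $z_m=z$, and then identifying the resulting inner sum with $H_{\tau[x,z]}$ via \eqref{eq: def H not up to top} using the rank shift in the truncation. Your write-up is in fact slightly more explicit than the paper's about why intermediate chain elements cannot lie in the deleted rank and why the $\rho_{x,z}=1$ terms vanish, but the underlying argument is the same.
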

\begin{proof}
    That the two statements are equivalent follows by Theorem \ref{thm: def H and d}. We will prove the second. We only need to consider  the case when $\rho_{x,y} > 1$.
    By \eqref{eq:def H up to top}, if $m = 0$, the contribution of the sum is $g_{x,y}$. If $m > 0$, then there exists a maximal element $z_m$ of each chain, which we denote by $z$. By choosing an element $z$ and grouping all the terms in the sum that have $z_m = z$, the statement reduces to showing that $H_{\tau[x,z]}$ coincides with
    \[\sum_{x=z_0<z_1<\cdots <z_{m-1} < z} g_{z_{m-1},z} \frac{t^{\rho_{z_{m-1},z}-1}-1}{t-1}\ t^{m-1}\prod_{i=1}^{m-1} \left(g_{z_{i-1},z_i} \frac{t^{\rho_{z_{i-1},z_i}-1}-1}{t-1}\right).
    \]
    This follows from \eqref{eq: def H not up to top} by observing that the rank $\rho_{z_{m-1},z}$ in the truncation $\tau[x,z]$ drops by one.
\end{proof}

\begin{corollary}\label{cor: H alternative truncation formula}
    Let $g \in I(P)$ be scalar. Then
    \[
    H_{x,y} = (1+t)H_{\tau[x,y]} - t H_{\tau^2[x,y]} + t\sum_{\substack{x \leq z\leq y\\ \rho_{z,y}=1}} H_{\tau[x,z]}g_{z,y}.
    \]
\end{corollary}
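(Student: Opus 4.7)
My plan is to apply Theorem \ref{thm: d as truncation}, in its equivalent form
\[
H_{x,y} = g_{x,y} + t \sum_{\substack{x \leq z \leq y \\ \rho_{x,z} \geq 2}} H_{\tau[x,z]}\, g_{z,y},
\]
to the interval $[x,y]$ itself and also to the truncated interval $\tau[x,y]$, and then to combine the two resulting expansions so that the ``intermediate-rank'' contributions cancel. Since $g$ is scalar, its restriction to $\tau[x,y]$ is still scalar and satisfies \eqref{condition for kls}, so the theorem is indeed available in the truncation.

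First I would split the sum above according to the value of $\rho_{z,y}$ into three pieces: the term $z = y$ contributes $tH_{\tau[x,y]}$; the pieces with $\rho_{z,y} = 1$ are exactly the terms displayed in the statement (for $\rho_{x,y} \geq 3$ the extra constraint $\rho_{x,z} \geq 2$ is automatic, since $\rho_{x,z} = \rho_{x,y} - 1$); and the remaining pieces satisfy $\rho_{z,y} \geq 2$ and $\rho_{x,z} \geq 2$. I would then repeat the same expansion inside $\tau[x,y]$, using two observations: for $z \in \tau[x,y]$ with $z \neq y$ the whole interval $[x,z]$ sits inside $\tau[x,y]$ (no removed element has rank $\leq \rho_{x,z}$), so its truncation there coincides with $\tau[x,z]$, and the condition $z \in \tau[x,y],\, z \neq y$ is precisely $\rho_{z,y} \geq 2$; for $z = y$ the inner truncation produces $\tau^2[x,y]$. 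This delivers
\[
H_{\tau[x,y]} = g_{x,y} + t \sum_{\substack{\rho_{x,z} \geq 2 \\ \rho_{z,y} \geq 2}} H_{\tau[x,z]}\, g_{z,y} + tH_{\tau^2[x,y]},
\]
which I would rearrange to express the intermediate-rank sum as $H_{\tau[x,y]} - g_{x,y} - tH_{\tau^2[x,y]}$.

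Substituting this expression back into the first expansion of $H_{x,y}$ makes the two copies of $g_{x,y}$ cancel and combines the two contributions of $H_{\tau[x,y]}$ into $(1+t)H_{\tau[x,y]}$, yielding exactly the identity in the statement. The only delicate point is the boundary behaviour in very small rank, where $\tau^2[x,y]$ degenerates; these cases can be treated by a separate direct check or by adopting the convention $H_\emptyset = 0$, so that the identity holds uniformly. The rest is a short bookkeeping exercise once the two applications of Theorem \ref{thm: d as truncation} are lined up.
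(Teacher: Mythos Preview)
Your approach is correct and is essentially the same as the paper's own proof, which simply says ``The statement follows by using Theorem~\ref{thm: d as truncation} on $H_{x,y}$ and $H_{\tau[x,y]}$ and taking their difference.'' You have merely spelled out the bookkeeping that the paper leaves implicit, and you were more careful than the paper in flagging the small-rank boundary cases.
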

\begin{proof}
    The statement follows by using Theorem \ref{thm: d as truncation} on $H_{x,y}$ and $H_{\tau[x,y]}$ and taking their difference. 
\end{proof}

\begin{remark}
    When $g = \zeta$, Theorem \ref{thm: d as truncation} specializes to \cite[Proposition~4.12]{ferroni-matherne-vecchi}, which reads
    \begin{equation}\label{HR-def-trunc}
        H_{x,y}= 1 + t\sum_{\substack{x\leq  z\leq y\\ \rho_{x,z} \geq 2}}H_{\tau([x,z])}.
    \end{equation}

\end{remark}
\subsection{Augmented Chow functions}
In \cite[Definition 3.13]{ferroni-matherne-vecchi} the \emph{augmented Chow function} was defined as the element in $I_\rho(P)$, 
\begin{equation}\label{Aug-Chow-def}
 \I(g) H.
\end{equation}
The next theorem offers an alternative equivalent definition.
\begin{theorem}\label{thm: def G and A}
    Let $g \in I(P)$ be a function satisfying \eqref{condition for kls}. Then there exist unique functions $G$ and $A$ in $I_\rho(P)$ for which 
    \begin{itemize}
        \item[(i)] $A_{x,x} = 1$ for each $x \in P$,
        \item[(ii)] $\I(G) = G$,
        \item[(iii)] $t\I(A) = A + (t-1)\delta$,
        \item[(iv)] $G = Ag$.
    \end{itemize}
    Moreover $G$ is the augmented Chow function \eqref{Aug-Chow-def} associated to $g$, and the polynomials $A_{x,y}$ satisfy the recursion
    \begin{equation}\label{A-recu}
    A_{x,y} = t\SSS_{\rho_{x,y}}\left(\sum_{x\leq z < y}A_{x,z}g_{z,y} \right), \quad x<y.
    \end{equation}
\end{theorem}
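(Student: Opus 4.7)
The plan is to mirror the proof of Theorem \ref{thm: def H and d} closely. For the existence, I set $G := \I(g)H$ (which, by \eqref{Aug-Chow-def}, is exactly the augmented Chow function associated to $g$) and $A := Gg^{-1}$. Since Theorem \ref{thm: def H and d} gives $d = Hg^{-1}$, this means $A = \I(g)d$, and (i) and (iv) are immediate: $A_{x,x} = \I(g)_{x,x}\,d_{x,x} = 1$, and $Ag = Gg^{-1}g = G$ by construction.

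The verification of (ii) rests on the auxiliary identity $\kappa H = \I(H)$, which I would obtain by noting that $\kappa$ commutes with $(\kappa - t\delta)^{-1}$ (being polynomials in $\kappa$); expanding $\kappa = (\kappa - t\delta) + t\delta$ in $H = (1-t)(\kappa - t\delta)^{-1}$ then gives $\kappa H = (1-t)\delta + tH = \I(H)$. Combined with $g\kappa = \I(g)$ (from $\kappa = g^{-1}\I(g)$), this yields
\[
\I(G) = \I(\I(g))\,\I(H) = g\,\I(H) = g\kappa H = \I(g)H = G.
\]
For (iii), I compute $\I(A) = \I(\I(g))\,\I(d) = gd$ (using $\I^2 = \delta$ and $\I(d) = d$), and then extract the scalar identity $(tg - \I(g))d = (t-1)\delta$ as follows: from the already-established $\I(G) = G$, the relation $\I(g)H = g\I(H) = tgH + (1-t)g$ rearranges to $(tg - \I(g))H = (t-1)g$; substituting $H = dg$ and cancelling $g$ on the right produces the desired identity. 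Then $t\I(A) - A = tgd - \I(g)d = (tg - \I(g))d = (t-1)\delta$, as required.

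For uniqueness and the recursion \eqref{A-recu}, I proceed by induction on $\rho_{x,y}$. Condition (i) fixes $A_{x,x} = 1$. For $x < y$, I apply $\SSS_{\rho_{x,y}}$ to both sides of (iv) at $[x,y]$: the left-hand side vanishes since (ii) implies $\I_{\rho_{x,y}}(G_{x,y}) = G_{x,y}$, while (iii) gives $\I_{\rho_{x,y}}(A_{x,y}) = A_{x,y}/t$ for $x<y$ (equivalently, $A_{x,y}$ is palindromic with center of symmetry $(\rho_{x,y}+1)/2$), so $\SSS_{\rho_{x,y}}(A_{x,y}) = -A_{x,y}/t$. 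Rearranging yields exactly \eqref{A-recu}, which recursively determines $A_{x,y}$ from the previously computed $A_{x,z}$, $z<y$. No serious obstacle arises: the main care needed is bookkeeping non-commuting products in the incidence algebra, but all manipulations reduce to the two identities $\kappa H = \I(H)$ and $(tg-\I(g))d = (t-1)\delta$ just established.
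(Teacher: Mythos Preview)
Your proposal is correct and follows essentially the same route as the paper: you set $G=\I(g)H$, $A=Gg^{-1}=\I(g)d$, verify (ii) via the identity $\kappa H=\I(H)$, and derive the recursion \eqref{A-recu} by applying $\SSS_{\rho_{x,y}}$ to (iv). The only cosmetic difference is in (iii): the paper computes $t\I(A)=tgHg^{-1}=g(\I(H)-(1-t)\delta)g^{-1}$ and then shows $g\I(H)g^{-1}=A$ directly, whereas you first isolate the clean intermediate identity $(tg-\I(g))d=(t-1)\delta$; both are the same algebra repackaged.
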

\begin{proof}
    Let $G = \I(g)H$ and $A = Gg^{-1} = \I(g) H g^{-1}= \I(g)d$. These satisfy (i) and (iv). To prove (ii) we first observe that 
    $$
    \kappa H = (\kappa-t\delta+t\delta) (1-t)(\kappa-t\delta)^{-1} = (1-t)\delta+tH=\I(H), 
    $$
    where again we work in the incidence algebra over $\RR(t)$, 
    and then write
    \[
    \I(G) = \I(\I(g)H) = g\I(H) = g\kappa H = \I(g)H = G.
    \]
    Also 
    \begin{align*}
    t\I(A) &= t gd = tgHg^{-1} = g(\I(H) - (1-t)\delta)g^{-1} = g \I(H) g^{-1} + (t-1)\delta
    \end{align*}
    and 
    \[
    g \I(H) g^{-1} = g\kappa H g^{-1} = g g^{-1}\I(g)Hg^{-1}= \I(g)Hg^{-1}= A, 
    \]
    which proves (iii). 
    
    For the uniqueness, we only need to prove that $A$ is unique. The uniqueness of $G$ then follows from $G = Ag$. To prove uniqueness of $A$ it remains to prove \eqref{A-recu}, the proof of which is almost identical to that of \eqref{d-recu-2}. 
\end{proof}

We call the unique function $A$ the \emph{Chow-Eulerian function} associated to $g$. When $P$ is bounded, $G_P = G_{\zero,\one}$ is called the \emph{augmented Chow polynomial} of $P$ (with respect to $g$) and $A_P = A_{\zero,\one}$ is called the \emph{Chow-Eulerian polynomial} of $P$.

The proof of the next corollary is the same as that of Corollary \ref{cor: H and d for initial intervals}. 
\begin{corollary}\label{aug-rec-cor}
    Let $P$ be a poset with a unique least element $\zero$ and $g \in I(P)$ be a function satisfying \eqref{condition for kls}. Then there are two unique families of polynomials $\{G_x\}_{x\in P}$ and $\{A_x\}_{x\in P}$ in $\R[t]$ that satisfy 
    \begin{itemize}
        \item[(i)] $A_{\zero} = 1$,
        \item[(ii)] $\I_{\rho_x}(G_x) = G_x$ for each $x \in P$,
        \item[(iii)] $\I_{\rho_x}(A_x) = A_x/t$ for each $x>\zero$, \mbox{ and }
        \item[(iv)] $G_y = \sum_{x \leq y}A_xg_{x,y}$  for each $y \in P$.
    \end{itemize}
    Moreover $G_x = G_{\zero,x}$ and $A_x= A_{\zero, x}$ for each $x \in P$, and 
      \begin{equation}\label{A-recu2}
    A_{y} = t\SSS_{\rho(y)}\left(\sum_{x< y}A_{x}g_{x,y} \right), \quad \zero<y.
    \end{equation}
\end{corollary}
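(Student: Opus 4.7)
The plan is to mimic the proof of Corollary~\ref{cor: H and d for initial intervals} almost verbatim, substituting Theorem~\ref{thm: def G and A} for Theorem~\ref{thm: def H and d}. Existence reduces to specializing the Chow-Eulerian data $(G, A)$ supplied by Theorem~\ref{thm: def G and A} to the initial intervals $[\zero, x]$, while uniqueness is an induction on $\rho(y)$ powered by the recursion \eqref{A-recu2}.

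For existence, I would set $G_x := G_{\zero,x}$ and $A_x := A_{\zero,x}$. Property (i) is immediate from (i) of the theorem, and property (iv) is just $G = Ag$ evaluated on intervals based at $\zero$. Property (ii) is obtained by reading off $\I(G) = G$ at $[\zero, x]$. For (iii), evaluating $t\I(A) = A + (t-1)\delta$ at $(\zero,x)$ with $x > \zero$ makes the $\delta$ term vanish, yielding $\I_{\rho_x}(A_x) = A_x/t$; the case $x = \zero$ is taken care of separately by (i).

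For uniqueness, I would induct on $\rho(y)$. The base case $y = \zero$ is forced: (i) gives $A_\zero = 1$, and then (iv) gives $G_\zero = 1$. For $y > \zero$, assuming $A_x$ is determined for all $x < y$, rewrite (iv) as
\[
G_y = A_y + \sum_{x < y} A_x g_{x,y},
\]
and apply $\SSS_{\rho(y)}$ to both sides. By (ii), $\SSS_{\rho(y)}(G_y) = 0$, and by (iii), $\SSS_{\rho(y)}(A_y) = (A_y/t - A_y)/(t-1) = -A_y/t$. Rearranging yields \eqref{A-recu2}, which determines $A_y$ uniquely, hence $G_y$ uniquely via (iv). The identifications $G_x = G_{\zero,x}$ and $A_x = A_{\zero,x}$ then follow from uniqueness. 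No real obstacle is expected; the only mildly subtle point is that property (iii) of the corollary is stated only for $x > \zero$, precisely because the $(t-1)\delta$ correction in Theorem~\ref{thm: def G and A}(iii) accounts for the case $x = \zero$ on its own.
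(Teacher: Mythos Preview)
Your proposal is correct and matches the paper's approach exactly: the paper simply states that the proof is the same as that of Corollary~\ref{cor: H and d for initial intervals}, which is precisely the translation you have written out, with Theorem~\ref{thm: def G and A} in place of Theorem~\ref{thm: def H and d} and $\SSS_{\rho(y)}$ in place of $\SSS_{\rho(y)-1}$.
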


\begin{example}
Suppose $P$ is again a boolean algebra of rank $r$, and $g= \zeta$. As in Example \ref{H-Eul-ex} it follows that there are polynomials $\{g_n\}_{n=0}^r$ and $\{f_n\}_{n=0}^r$ for which $G_x = g_{\rho(x)}$ and $A_x = f_{\rho(x)}$ for each $x \in P$. The identity $\I(A) = g\I(d) = gd$ then implies 
    \[
    \I(f_n) = \sum_{k=0}^n \binom{n}{k}d_{n-k} = \sum_{k=0}^n \binom{n}{k}d_{k},
    \]
    so $\I_n(f_n)=A_n$, the $n$th Eulerian polynomial by Example \ref{H-Eul-ex}. Hence $f_0=1$, and $f_n= tA_n$ for $n \geq 1$. The defining identity $G=Ag$ then implies 
    \[
    g_n= 1 + t\sum_{k=1}^n \binom n k A_{k}, \quad n \geq 0, 
    \]
    and by the uniqueness of $\{G_x\}_x$ we conclude that the polynomials $G_x$, $x \in P$, coincide with the \emph{binomial Eulerian polynomials} $\widetilde{A}_{\rho(x)}$ which are palindromic and satisfy the same defining identities (see the discussion in \cite[Section~10.4]{postnikov-reiner-williams}).
\end{example}

We shall now see that augmented Chow polynomials are themselves Chow polynomials. This extends \cite[Corollary~4.6]{ferroni-matherne-vecchi} where the case when $g =\zeta$ was proved. 
Let $P$ be a poset with a unique least element $\zero_P$. Denote by $(\aug(P),\rho_\aug)$ the \emph{augmentation} of $P$, i.e., the poset obtained from $P$ by adding a new least element $\zero_\aug$. The rank function $\rho_\aug$ is such that
\[
(\rho_\aug)_{x,y} = \begin{cases}
    \rho_{x,y}, &\mbox{ if } x\neq \zero_\aug, \mbox{ and }\\
    \rho_{\zero_P,y} + 1, &\mbox{ if } x = \zero_\aug \mbox{ and } \ y \neq \zero_\aug.
\end{cases}
\]
Define the element $\overline{g} \in I(\aug(P))$ by
\[
\overline{g}_{x,y} = \begin{cases}
    g_{x,y}, &\mbox{ if } x \neq \zero_\aug, \mbox{ and}\\
    g_{\zero_P,y}, &\mbox{ if } x= \zero_\aug \mbox{ and } \ y \neq \zero_\aug,\\
    1, &\mbox{ if }x= y = \zero_\aug.
\end{cases}
\]
\begin{theorem}\label{thm: G is H, A is d}
    Let $P$ be a poset with a unique least element $\zero_P$ and $g \in I(P)$ be a function satisfying \eqref{condition for kls} and let $\{G_x\}_{x \in P}$ and $\{A_x\}_{x \in P}$ be the augmented Chow polynomials and Chow-Eulerian polynomials associated to $P$, respectively. 
    
    If $\overline{g} \in I(\aug(P))$ is defined as above, then
    \begin{align*}
    G_x &= H_{\aug([\zero_P,x])}, \quad  \mbox{ for each $x \in P$,  and  } \\
     A_x &= d_{\aug([\zero_P,x])}, \quad  \mbox{ for each $x > \zero_P$ in $P$.   }
    \end{align*}
\end{theorem}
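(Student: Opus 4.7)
The plan is to invoke the uniqueness clause in Corollary \ref{cor: H and d for initial intervals} applied to the poset $\aug(P)$ equipped with $\overline{g}$. More precisely, I would define candidate families $\{H'_x\}_{x\in\aug(P)}$ and $\{d'_x\}_{x\in\aug(P)}$ by
$$H'_{\zero_\aug}:=1,\quad H'_x:=G_x \text{ for }x\in P,\qquad d'_{\zero_\aug}:=1,\quad d'_{\zero_P}:=0,\quad d'_x:=A_x \text{ for }x>\zero_P,$$
and then verify that these satisfy the four defining conditions (i)--(iv) of Corollary~\ref{cor: H and d for initial intervals} with respect to the kernel datum $(\aug(P),\overline{g})$. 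Since those conditions determine $H$ and $d$ uniquely, the conclusion $G_x=H_{\aug([\zero_P,x])}$ and $A_x=d_{\aug([\zero_P,x])}$ will follow.

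The verification splits naturally by cases on whether the relevant element is $\zero_\aug$, $\zero_P$, or strictly above $\zero_P$. Condition (i) is immediate. For (ii) and (iii), the key observation is the identity $\I_{n+1}(f)=t\,\I_n(f)$ for any polynomial $f$ of degree at most $n$; combined with $(\rho_\aug)_x=\rho_x+1$ for $x\in P$, this converts the palindromic conditions from Corollary~\ref{aug-rec-cor} (\emph{viz.} $\I_{\rho_x}(G_x)=G_x$ and $\I_{\rho_x}(A_x)=A_x/t$) into the Chow/derangement conditions $\I_{\rho_x+1}(G_x)=tG_x$ and $\I_{\rho_x+1}(A_x)=A_x$. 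The boundary values $H'_{\zero_P}=1$ and $d'_{\zero_P}=0$ are easily checked against $(\rho_\aug)_{\zero_P}=1$.

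For (iv), the identity $G_y=\sum_{x\leq y}A_x g_{x,y}$ in $P$ (Corollary \ref{aug-rec-cor}(iv)) must be rewritten as a sum over $\aug(P)$. Splitting off the contribution of $\zero_\aug$, and using $\overline{g}_{\zero_\aug,y}=g_{\zero_P,y}=A_{\zero_P}\,g_{\zero_P,y}$ together with $d'_{\zero_P}=0$, one finds
$$\sum_{x\in\aug(P),\,x\leq y}d'_x\,\overline{g}_{x,y}=\overline{g}_{\zero_\aug,y}+\!\!\!\sum_{\zero_P<x\leq y}\!\!\!A_x\,g_{x,y}=\!\sum_{\zero_P\leq x\leq y}\!A_x\,g_{x,y}=G_y=H'_y,$$
as required; the cases $y=\zero_\aug$ and $y=\zero_P$ are checked directly.

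The entire argument is essentially bookkeeping, so there is no serious obstacle; the only point requiring minor care is the boundary case $y=\zero_P$, where one must observe that forcing $\I_1(d_{\zero_P})=d_{\zero_P}$ together with the recursion \eqref{d-recu-2} (noting $\SSS_0(1)=0$) pins down $d'_{\zero_P}=0$, which is precisely the value needed to reconcile the two sums in condition (iv). Once this is handled, uniqueness from Corollary \ref{cor: H and d for initial intervals} closes the proof.
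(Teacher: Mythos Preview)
Your proof is correct and follows essentially the same idea as the paper's: both arguments exploit the rank shift $(\rho_\aug)_x=\rho_x+1$ to translate between the palindromic conditions in Corollaries~\ref{cor: H and d for initial intervals} and~\ref{aug-rec-cor}, and then invoke uniqueness. The only difference is the direction of verification: the paper starts with the Chow data $(H'_x,d'_x)$ of $\aug(P)$, shows that (after replacing $d'_{\zero_P}=0$ by $d''_{\zero_P}=1$) they satisfy the defining conditions of Corollary~\ref{aug-rec-cor} for $P$, and appeals to uniqueness there; you instead start with $(G_x,A_x)$ on $P$, build the candidate pair on $\aug(P)$, and appeal to uniqueness in Corollary~\ref{cor: H and d for initial intervals}. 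The two arguments are mirror images of one another and neither offers any real advantage over the other.
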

\begin{proof}
Let $\{H_x'\}_{x \in \aug(P)}$ and $\{d_x'\}_{x \in \aug(P)}$ be the Chow polynomials and Chow-derangement polynomials associated to $\overline{g}$ in $\aug(P)$, respectively. Then $\I_{\rho(x)}(H_x') = H_x'$ for each $x \in P$, and  $\I_{\rho(x)}(d_x') = d_x'/t$ for $x > \zero_P$ in $P$, since $\rho(x) = \rho_{\aug}(x)-1$. By Corollary \ref{cor: H and d for initial intervals}, for each $y \in P$, 
\begin{align*}
H_y' &= d_{\zero_{\aug}}'\overline{g}_{\zero_{\aug},y}+ \sum_{\zero_P \leq x \leq y} d_x'g_{x,y} \\
&= g_{\zero_P,y}+ \sum_{\zero_P < x \leq y} d_x'g_{x,y} \quad \mbox{ (since $d_{\zero_P}'=0$) } \\
&= \sum_{\zero_P \leq x \leq y} d_x''g_{x,y},
\end{align*}
where $d_x'':= d_x'$ unless $x= \zero_P$, and $d_{\zero_P}'' :=1$. From Corollary \ref{aug-rec-cor} it now follows that $H'_x = G_x$ and $d_x''=A_x$ for each $x \in P$, which proves theorem. 
\end{proof}

\begin{corollary}\label{cor: G formula with truncation}
    Let $g\in I(P)$ be scalar. Then
    \[
    G_y = g_{\zero,y} + t\sum_{\zero < x \leq y}G_{\tau[\zero,x]}g_{x,y}, \quad \mbox{ for each } y \in P.
    \]
\end{corollary}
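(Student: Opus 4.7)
The plan is to derive the identity from Theorem~\ref{thm: d as truncation} by pulling the statement back through the augmentation correspondence of Theorem~\ref{thm: G is H, A is d}. First, I would write $G_y = H_{\aug([\zero_P,y])}$, where $H$ denotes the Chow polynomial in the augmented poset $\aug(P)$ with respect to $\overline{g}$. Since $\overline{g}$ takes values either in $\R$ or equals the constant $1$, it is scalar whenever $g$ is, so Theorem~\ref{thm: d as truncation} is applicable to $\overline{g}$ in $\aug(P)$.

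Applied to the interval $[\zero_{\aug},y]$, that theorem gives
$$
G_y = \overline{g}_{\zero_{\aug},y} + t\sum_{\substack{\zero_{\aug}\le z\le y \\ (\rho_{\aug})_{\zero_{\aug},z}\ge 2}} H_{\tau[\zero_{\aug},z]}\,\overline{g}_{z,y}.
$$
The initial term equals $g_{\zero_P,y}$ by the very definition of $\overline{g}$. The rank condition $(\rho_{\aug})_{\zero_{\aug},z}\ge 2$ rules out $z=\zero_{\aug}$ and, for $z\in P$, uses $\rho_{\aug}(z)=\rho(z)+1$ to become $\rho(z)\ge 1$, i.e., $z>\zero_P$; moreover $\overline{g}_{z,y}=g_{z,y}$ for such $z$. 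Thus the problem reduces to identifying $H_{\tau[\zero_{\aug},z]}$ with $G_{\tau[\zero_P,z]}$.

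I expect this last identification to be the main (though modest) obstacle. The key observation is that $\tau[\zero_{\aug},z] = \aug(\tau[\zero_P,z])$ as weakly ranked posets: truncating the augmented interval removes its coatoms, which are exactly the coatoms of $[\zero_P,z]$ in $P$, since $\rho_{\aug}$ agrees with $\rho$ up to a shift of one above $\zero_P$. Moreover, the restriction of $\overline{g}$ from $\aug(P)$ to this truncated augmented interval coincides with the $\overline{g}$-type function obtained by first restricting $g$ to $\tau[\zero_P,z]$ and then applying the augmentation construction. A second invocation of Theorem~\ref{thm: G is H, A is d}, now for $\tau[\zero_P,z]$ with the restricted $g$, then gives $H_{\tau[\zero_{\aug},z]} = G_{\tau[\zero_P,z]}$. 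A routine sanity check along the way is that the restriction of $g$ to $\tau[\zero_P,z]$ still satisfies \eqref{condition for kls}, which is immediate because scalar functions have degree $0\le \rho_{x,y}/2$ for every $x<y$. Substituting these identifications into the displayed expansion yields the claimed identity.
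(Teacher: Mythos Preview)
Your proposal is correct and follows the same route as the paper, which simply records that the identity is a direct consequence of Theorem~\ref{thm: G is H, A is d} and Theorem~\ref{thm: d as truncation}. You have fleshed out exactly this: apply the second form of Theorem~\ref{thm: d as truncation} in $\aug(P)$ with the scalar function $\overline{g}$, and then use the identification $\tau[\zero_{\aug},z]=\aug(\tau[\zero_P,z])$ together with Theorem~\ref{thm: G is H, A is d} to rewrite $H_{\tau[\zero_{\aug},z]}$ as $G_{\tau[\zero_P,z]}$; the only cosmetic slip is that the degree bound in \eqref{condition for kls} is strict, but since $\rho_{x,y}\ge 1$ for $x<y$ this makes no difference.
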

\begin{proof}
    This is a direct consequence of Theorem \ref{thm: G is H, A is d} and Theorem \ref{thm: d as truncation}.
\end{proof}

\subsection{Duality for Chow functions}
If $P$ is a finite weakly ranked poset, then so is its dual $P^*$, i.e., the poset on the same ground set as $P$ for which $x \leq_{P^*} y$ if and only if $y \leq_{P} x$. If $f  \in I(P)$, let $f^T \in I(P^*)$ be defined by $f^T_{y,x} = f_{x,y}$ for all $x \leq y$ in $P$. If $g$ is a left KLS function for $P$, then 
$g^T$ is a left KLS function for $P^*$. Denote by  $H^*, d^*, G^*$ and $A^*$, the corresponding Chow functions for $P^*$
\begin{theorem}\label{dualfunc}
Suppose $P$ is a finite weakly ranked poset with a left KLS function $g$. Then 
$$
A^*=\I(H^T), \ \ H^*=\I(A^T) \ \ \mbox{ and } \ \ G^*= G^T. 
$$
\end{theorem}
\begin{proof}
By Theorem \ref{thm: def G and A} for $P$, 
$$
G= \I(g)H=Ag, \ \  \I(G)=G \ \ \mbox{ and } \ \ t\I(A)= A+(t-1)\delta. 
$$
Transpose these identities and apply $\I$ to deduce 
$$
G^T= \I(H^T)g^T = \I(g^T)\I(A^T), \ \ \I(G^T)=G^T \ \  \mbox{ and } \ \ t\I(\I(H)^T)=\I(H)^T+(t-1)\delta, 
$$
where we have used Theorem \ref{thm: def H and d} for the last equation. The theorem now follows from the uniqueness in Theorem \ref{thm: def G and A} for $P^*$. 
\end{proof}

\section{Chow polynomial of matrices and weak-rank uniform posets}\label{section chow matrix}
Let $R=(r_{n,k})_{n,k=0}^N$, where $N \in \NN \cup \{\infty\}$, be a lower triangular matrix with all diagonal entries equal to one. Consider the chain $[0,N]=\{0<1<\cdots<N\}$ graded by the rank function $\rho(n) = n$. Then the matrix $R$ corresponds to an element $g = g[R]$ in the incidence algebra of $[0,N]$ defined by
\[
g_{k,n} = r_{n,k}, \quad \mbox{ for all } 0\leq k \leq n \leq N.
\]
Clearly $g$ satisfies \eqref{condition for kls} and therefore we can define the corresponding Chow, Chow-derangement, augmented Chow, and Chow-Eulerian polynomials. Corollaries \ref{cor: H and d for initial intervals} and \ref{aug-rec-cor} then translate as the following corollaries which provide the defining relations for these polynomials. 

\begin{corollary}\label{cor: H and d for matrices}
 Let $R=(r_{n,k})_{n,k=0}^N$, $N \in \NN \cup \{\infty\}$, be a lower triangular matrix with all diagonal entries equal to one. There are unique polynomials $\{d_n\}_{n=0}^N$ and $\{H_n\}_{n=0}^N$ for which 
\begin{itemize}
    \item[(i)] $d_0=1$, 
    \item[(ii)]$\I_n(H_n)=tH_n$, for each $1 \leq n \leq N$, 
    \item[(iii)] $\I_n(d_n) =d_n$, for each $0 \leq n \leq N$, and 
    \item[(iv)] $H_n=\sum_{k=0}^n r_{n,k} d_k$, for each $0 \leq n \leq N$. 
\end{itemize} 
Moreover $d_n=d_n[R]$ and $H_n=H_n[R]$, $n \leq N$,  are the Chow-derangement polynomials and the Chow polynomials associated to $g[R]$, respectively, and 
\begin{equation}\label{dn-recu}
d_n= t\SSS_{n-1}\left(\sum_{k=0}^{n-1} r_{n,k} d_k \right), \quad 1 \leq n \leq N. 
\end{equation}
\end{corollary}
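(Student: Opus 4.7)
The plan is to derive this corollary as an immediate specialization of Corollary \ref{cor: H and d for initial intervals} to the chain poset $P=[0,N]$ equipped with the rank function $\rho(n)=n$ and the scalar incidence-algebra element $g=g[R]$ defined in the preceding paragraph.

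First I would verify that $g$ satisfies the standing hypothesis \eqref{condition for kls}. The assumption that every diagonal entry of $R$ equals one gives $g_{n,n}=r_{n,n}=1$, and since each $g_{k,n}=r_{n,k}$ is an element of $\R$ it has degree zero as a polynomial in $t$, so the inequality $\deg g_{k,n}<\rho_{k,n}/2$ holds trivially whenever $k<n$. Thus $g$ is of the type to which Corollary \ref{cor: H and d for initial intervals} applies.

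Next I would transfer the output of that corollary to the chain. Setting $H_n:=H_{0,n}$ and $d_n:=d_{0,n}$ and using $\rho(n)=n$, conditions (i)--(iv) of the present statement become precisely conditions (i)--(iv) of Corollary \ref{cor: H and d for initial intervals}: the normalization $d_0=1$; the palindromy-type relations $\I_n(H_n)=tH_n$ (for $n\geq 1$) and $\I_n(d_n)=d_n$; and the defining identity $H_n=\sum_{k=0}^n r_{n,k}d_k$ obtained from $H_y=\sum_{x\leq y}d_x g_{x,y}$ by reading off $g_{k,n}=r_{n,k}$. Uniqueness transfers directly from the poset statement, as does the recursion \eqref{dn-recu} from \eqref{d-recu-2}. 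Under these identifications, $H_n=H_n[R]$ and $d_n=d_n[R]$ coincide with the Chow and Chow-derangement polynomials attached to $g[R]$, as asserted.

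I do not foresee any real obstacle: the only point that warrants a moment's thought is the case $N=\infty$, but the chain $[0,\infty)$ is locally finite and the relevant results in Section \ref{section: Chow posets} make no cardinality assumption on $P$, so nothing additional is needed. Everything else is pure transcription.
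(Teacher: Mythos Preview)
Your proposal is correct and follows exactly the approach in the paper: the corollary is stated there as an immediate translation of Corollary~\ref{cor: H and d for initial intervals} applied to the chain $[0,N]$ with rank function $\rho(n)=n$ and $g=g[R]$. The paper does not even write out a formal proof, and your verification of hypothesis~\eqref{condition for kls} and the identification of (i)--(iv) with the poset versions are precisely what is implicit in the paper's remark that the corollaries ``translate'' directly.
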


\begin{corollary}\label{cor: G and A for matrices}
 Let $R=(r_{n,k})_{n,k=0}^N$, $N \in \NN \cup \{\infty\}$, be a lower triangular matrix with all diagonal entries equal to one. There are unique polynomials $\{A_n\}_{n=0}^N$ and $\{G_n\}_{n=0}^N$ for which 
\begin{itemize}
    \item[(i)] $A_0=1$, 
    \item[(ii)]$\I_n(G_n)=G_n$, for each $0 \leq n \leq N$, 
    \item[(iii)] $\I_n(A_n) =A_n/t$, for each $1 \leq n \leq N$, and 
    \item[(iv)] $G_n=\sum_{k=0}^n r_{n,k} A_k$, for each $0 \leq n \leq N$. 
\end{itemize} 
Moreover $A_n=A_n[R]$ and $G_n=G_n[R]$, $n \leq N$, are the Chow-Eulerian polynomials and the augmented Chow polynomials associated to $g[R]$, respectively, and 
$$
A_n= t\SSS_{n}\left(\sum_{k=0}^{n-1} r_{n,k} A_k \right), \quad 1 \leq n \leq N. 
$$
\end{corollary}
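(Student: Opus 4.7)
The plan is to derive Corollary \ref{cor: G and A for matrices} as a direct specialization of Corollary \ref{aug-rec-cor} to the chain poset $P = [0, N]$ equipped with the rank function $\rho(n) = n$ and the scalar element $g = g[R]$ of the incidence algebra defined by $g_{k,n} = r_{n,k}$. First I would verify that $g[R]$ satisfies condition \eqref{condition for kls}: since each $g_{k,n} = r_{n,k}$ is an element of $\R$ viewed as a constant polynomial, we have $g_{n,n} = r_{n,n} = 1$, and for $k < n$ trivially $\deg g_{k,n} = 0 < \rho_{k,n}/2 = (n-k)/2$. Hence the hypotheses of Corollary \ref{aug-rec-cor} are met and we obtain unique families $\{G_x\}_{x \in P}$ and $\{A_x\}_{x \in P}$, together with the recursion \eqref{A-recu2}.

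Next I would identify each $x \in P$ with its integer label $n \in \{0, 1, \dots, N\}$, set $G_n := G_x$ and $A_n := A_x$, and observe that under this identification $\rho_x = n$ and $g_{k,n} = r_{n,k}$. Properties (i)--(iv) of the corollary to be proved then read off from the corresponding items in Corollary \ref{aug-rec-cor}: (i) is $A_0 = 1$; (ii) and (iii) become the palindromy conditions on $G_n$ and $A_n$ with the appropriate center of symmetry; and (iv) becomes $G_n = \sum_{k=0}^n r_{n,k} A_k$ since the sum $\sum_{x \leq y} A_x g_{x,y}$ in the chain poset is exactly this. The stated recursion for $A_n$ is precisely \eqref{A-recu2} under this translation. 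That $G_n$ and $A_n$ coincide with the augmented Chow and Chow-Eulerian polynomials of $g[R]$ follows from the identifications $G_x = G_{\zero, x}$ and $A_x = A_{\zero, x}$ recorded in Corollary \ref{aug-rec-cor}.

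The argument is entirely parallel to that of Corollary \ref{cor: H and d for matrices}, which specializes Corollary \ref{cor: H and d for initial intervals} in the same way. There is no technical obstacle here; the content is a notational translation from the incidence-algebra framework to matrix notation, exploiting the fact that in the chain poset the data of $g$ is exactly encoded by the strictly lower triangular part of $R$, together with the diagonal ones.
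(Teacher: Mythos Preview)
Your proof is correct and is exactly the approach the paper takes: the paper states just before the two corollaries that ``Corollaries \ref{cor: H and d for initial intervals} and \ref{aug-rec-cor} then translate as the following corollaries,'' and your argument spells out precisely this translation for the chain poset $[0,N]$ with $g = g[R]$.
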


In Section \ref{section toeplitz} we will need the following result which tells us how the Chow polynomials behave under conjugation by a diagonal matrix. 

\begin{proposition}\label{scaling}
Let $R=(r_{n,k})_{n,k=0}^N$, $N \in \NN \cup \{\infty\}$, be a lower triangular matrix with entries in a field $K$, and all diagonal entries equal to one. Let further $\{c_n\}_{n=0}^N$ be a sequence of nonzero elements in $K$ with $c_0=1$. Consider the matrix $R'= (r_{n,k} c_nc_k^{-1})_{n,k=0}^N$. 

Then 
\begin{alignat*}{2}
d_n[R']&= c_n d_n[R], \quad &H_n[R']= c_n H_n[R], \\
A_n[R']&= c_n A_n[R],\; \mbox{ and }\; &G_n[R'] = c_n G_n[R]. 
\end{alignat*}
\end{proposition}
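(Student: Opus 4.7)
The plan is to invoke the uniqueness clauses in Corollaries \ref{cor: H and d for matrices} and \ref{cor: G and A for matrices}: it suffices to exhibit polynomials that satisfy properties (i)--(iv) for the matrix $R'$ and are equal to the claimed scaled versions of the polynomials for $R$. Because the scalars $c_n \in K$ do not involve $t$, they commute with every operator $\I_n$, and the scalings in property (iv) telescope through the product $c_n c_k^{-1} \cdot c_k = c_n$. This is really a verification more than a proof, so the main ``obstacle'' is just being careful that all four properties transform correctly.

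First I would handle the Chow and Chow-derangement polynomials. Define
\[
\widetilde{d}_n := c_n\, d_n[R], \qquad \widetilde{H}_n := c_n\, H_n[R], \qquad 0 \leq n \leq N.
\]
Property (i) is immediate since $c_0 = 1$ and $d_0[R] = 1$. For (ii) and (iii), since $c_n$ is a scalar in $K$, it commutes with $\I_n$, giving $\I_n(\widetilde{H}_n) = c_n \I_n(H_n[R]) = c_n \cdot t H_n[R] = t \widetilde{H}_n$ and $\I_n(\widetilde{d}_n) = c_n \I_n(d_n[R]) = c_n d_n[R] = \widetilde{d}_n$. For (iv) with matrix $R' = (r_{n,k} c_n c_k^{-1})$:
\[
\sum_{k=0}^n (r_{n,k} c_n c_k^{-1})\, \widetilde{d}_k = \sum_{k=0}^n r_{n,k} c_n c_k^{-1} \cdot c_k d_k[R] = c_n \sum_{k=0}^n r_{n,k} d_k[R] = c_n H_n[R] = \widetilde{H}_n.
\]
By the uniqueness in Corollary \ref{cor: H and d for matrices} applied to $R'$, one concludes $d_n[R'] = \widetilde{d}_n = c_n d_n[R]$ and $H_n[R'] = \widetilde{H}_n = c_n H_n[R]$.

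The argument for $A_n$ and $G_n$ is identical in structure, with properties (i)--(iv) from Corollary \ref{cor: G and A for matrices}: setting $\widetilde{A}_n := c_n A_n[R]$ and $\widetilde{G}_n := c_n G_n[R]$, the palindromicity $\I_n(\widetilde{G}_n) = \widetilde{G}_n$ and the relation $\I_n(\widetilde{A}_n) = \widetilde{A}_n/t$ are inherited from the corresponding properties for $R$, and the convolution identity (iv) again telescopes through $c_n c_k^{-1} \cdot c_k = c_n$. The uniqueness part of Corollary \ref{cor: G and A for matrices} then finishes the proof. The hypothesis that $K$ is a field (and each $c_n$ nonzero) is only needed to ensure that the entries $c_n c_k^{-1}$ of $R'$ make sense; it plays no role in the verification itself.
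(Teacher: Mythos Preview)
Your proof is correct and follows essentially the same approach as the paper's: both verify that a suitably rescaled family satisfies the defining properties (i)--(iv) and invoke the uniqueness in Corollaries~\ref{cor: H and d for matrices} and~\ref{cor: G and A for matrices}. The only cosmetic difference is direction---the paper divides $H_n[R']$ and $d_n[R']$ by $c_n$ and checks that the result satisfies the defining relations for $R$, while you multiply $H_n[R]$ and $d_n[R]$ by $c_n$ and check the relations for $R'$.
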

\begin{proof}
By Corollary \ref{cor: H and d for matrices},
$$
H_n[R']=\sum_{k=0}^n r_{n,k}c_n c_k^{-1} d_k[R'], \quad n \leq N. 
$$  
Hence 
$$
H_n[R']c_n^{-1}=\sum_{k=0}^n r_{n,k} d_k[R']c_k^{-1}, \quad n \leq N. 
$$  
The uniqueness in Corollary \ref{cor: H and d for matrices} now implies that, for each $n$, $d_n[R]= d_n[R']c_n^{-1}$ and $H_n[R]= H_n[R']c_n^{-1}$. The proof for $G_n$ and $A_n$ follows similarly.  
\end{proof}

Next we will see how  Corollary \ref{cor:H from chains} gives a non-recursive formula to compute $H_n$.
For $S = \{ s_1 < s_2 < \cdots < s_m\} \subseteq [n-1]$ let
\[
\alpha_{[0,n]}(S) = \prod_{i=1}^{m+1} r_{s_i,s_{i-1}},
\]
where $s_0 = 0$ and $s_{m+1} = n$.
\begin{proposition}\label{def:HR canonical decomposition}
    Let $R=(r_{n,k})_{n,k=0}^N$ be a lower triangular matrix with all diagonal entries equal to one. Then $H_0 = 1$ and, for $n \geq 1$,
     \[
   H_n=    \sum_{m\geq 0} \! \! \sum_{\substack{S\subseteq [n] \\ S = \{s_1 < \cdots < s_m\} }} \! \! \! \! \! \! \!  t^m\alpha_{[0,n]}(S)\prod_{i=1}^m\left(\frac{t^{s_i - s_{i-1} - 1} - 1}{t-1} \right)
    \]
    or, equivalently,
    \[
   H_n=    \sum_{m\geq 0} \! \! \sum_{\substack{S\subseteq [n-1] \\ S = \{s_1 < \cdots < s_m\} }} \! \! \! \! \! \! t^m\alpha_{[0,n]}(S)\frac{t^{n-s_m}-1}{t-1}\prod_{i=1}^m\left(\frac{t^{s_i - s_{i-1} - 1} - 1}{t-1} \right),
    \]
where  $s_0 = 0$.
\end{proposition}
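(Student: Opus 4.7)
The plan is to apply Corollary \ref{cor:H from chains} directly to the chain poset $[0,N]$ equipped with the scalar element $g = g[R]$ in its incidence algebra. By construction $g$ is indeed scalar and all ranks are given by $\rho_{k,n} = n-k$, so both formulas of Corollary \ref{cor:H from chains} specialize to something about subsets of $[n]$.

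For the first identity, I would apply the first formula of Corollary \ref{cor:H from chains} to the interval $[0,n]$, yielding
\[
H_n = \sum_{0 = z_0 < z_1 < \cdots < z_m \leq n} g_{z_m,n}\, t^m \prod_{i=1}^m g_{z_{i-1},z_i}\,\frac{t^{z_i - z_{i-1} - 1} - 1}{t-1}.
\]
Now I would reindex each such chain by the subset $S = \{s_1 < \cdots < s_m\} \subseteq [n]$ with $s_i := z_i$, together with the conventions $s_0 = 0$ and $s_{m+1} = n$. Since $g$ is scalar with $g_{k,n} = r_{n,k}$, the product of $g$-factors becomes
\[
g_{s_m,n}\prod_{i=1}^{m} g_{s_{i-1},s_i} = r_{n,s_m}\prod_{i=1}^{m} r_{s_i, s_{i-1}},
\]
and a small case check shows this equals $\alpha_{[0,n]}(S)$ regardless of whether $s_m = n$ or $s_m < n$: if $s_m = n$ the extra factor $r_{s_{m+1},s_m} = r_{n,n} = 1$ is absorbed, while if $s_m < n$ the factor $r_{n,s_m}$ is precisely the missing $r_{s_{m+1},s_m}$. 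Substituting this and $z_i - z_{i-1} = s_i - s_{i-1}$ into the sum yields the first formula.

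For the second identity, I would argue analogously starting from the second formula of Corollary \ref{cor:H from chains}, whose indexing imposes $z_m < y = n$, so the associated subset lies in $[n-1]$. The extra factor $(t^{\rho_{z_m,n}} - 1)/(t-1)$ becomes $(t^{n - s_m} - 1)/(t-1)$, while the rest of the manipulation is identical. The case $n = 0$ is handled separately: the empty chain gives $H_0 = g_{0,0} = 1$.

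The only real thing to get right is the bookkeeping at the top of the chain, namely the treatment of $s_{m+1} = n$ in $\alpha_{[0,n]}(S)$ and the correct correspondence between the two conventions (``$z_m \leq n$'' and ``$z_m < n$'') in the two versions of Corollary \ref{cor:H from chains}. I do not anticipate any analytic or combinatorial obstacle beyond this translation of notation.
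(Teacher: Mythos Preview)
Your proposal is correct and is precisely the approach the paper takes: the paper does not spell out a proof but introduces the proposition with the sentence ``Next we will see how Corollary~\ref{cor:H from chains} gives a non-recursive formula to compute $H_n$,'' so the intended argument is exactly the specialization to the chain $[0,N]$ with the scalar $g=g[R]$ that you carry out. Your handling of the bookkeeping at the top of the chain (the case split $s_m=n$ versus $s_m<n$ in verifying that $g_{s_m,n}\prod_i g_{s_{i-1},s_i}=\alpha_{[0,n]}(S)$) is the only nontrivial point, and you have it right.
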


Let $\partial_n R$ denote the matrix obtained from $R$ by deleting the row and column indexed by $n$. Theorem \ref{thm: d as truncation} implies that the Chow-derangement polynomials asssociated to $R$ may be expressed as 
\begin{equation}\label{dntruncR}
    d_n = \begin{cases}
        1, &\mbox{if } n=0,\\
        0, &\mbox{if } n=1,\\
        tH_{n-1}[\partial_{n-1}R], &\mbox{if } n>1.
    \end{cases}
\end{equation}
    In particular,
    \[
    H_n = r_{n,0} + t \sum_{k=2}^n r_{n,k}H_{k-1}[\partial_{k-1}R], \quad n\geq 0.
    \]

Let also $\overline{R}=(\overline{r}_{n,k})_{n,k=0}^{N+1}$ be the matrix obtained form $R$ by adding a new initial row 
$[1,0,0,\ldots]$ and a new initial column $[1,r_{0,0}, r_{1,0}, r_{2,0}, \ldots]$. Then, by Theorem \ref{thm: G is H, A is d}, the augmented Chow polynomials $G_n$ of $R$ satisfy
\begin{equation}\label{aug-matrx}
G_n= H_{n+1}[\overline{R}], \quad n \geq 0.
\end{equation}

Similarly, the Chow-Eulerian polynomials $A_n$ satisfy
$A_0=1$, and 
\begin{equation}\label{DR-def}
A_n = t  H_n[\partial_n \overline{R}], \quad n \geq 1.
\end{equation}

We shall now see that the Chow polynomials of certain matrices are actually characteristic Chow polynomials of posets, by following the construction in \cite[Section 5]{branden-saud-1}. We say that a weakly ranked poset $P$ with a least element $\zero$ is \emph{weak-rank uniform} if for any $x$ and $y$ in $P$ with $\rho(x) = \rho(y)$, 
\begin{equation}\label{wru}
|\{z \leq x : \rho(z) = k\}| = |\{z \leq y : \rho(z) = k\}|, \quad \text{ for each }0\leq k \leq \rho(x).
\end{equation}
If $\rho(x) = n$, we define $r_{n,k}=r_{n,k}(P) = |\{z \leq x : \rho(z) = k \}|$, and write $R = R(P) = (r_{n,k})_{n,k\geq 0}$. Notice that $r_{n,0} = r_{n,n} = 1$ for each $n$. If $P$ is graded and satisfies \eqref{wru}, then we say that $P$ is \emph{rank-uniform}. 
\begin{proposition}\label{matrix-poset}
    Let $P$ be a weak-rank uniform poset, and let $R = (r_{n,k}(P))_{n,k=0}^N$ be the corresponding matrix, where $N\in \NN \cup \{\infty\}$. If $n \leq N$, then $H_n[R]$ is equal to the characteristic Chow polynomial $H_{x}$, where $x$ is any element of $P$ of rank $n$.
\end{proposition}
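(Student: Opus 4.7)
The plan is to expand both $H_x$ and $H_n[R]$ via the chain formula of Corollary~\ref{cor:H from chains} and then match the two sums term by term, grouping chains in the interval $[\zero,x]$ of $P$ by their sequence of ranks. The reason to expect this to work is that Corollary~\ref{cor:H from chains} expresses $H$ in a purely combinatorial manner whenever $g$ is scalar, and both $g=\zeta$ on $P$ and $g[R]$ on $[0,n]$ are scalar, so the difference between the two expressions is only a matter of how many chains have a given rank pattern.

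More concretely, since $g=\zeta$ is scalar on the interval $[\zero,x]\subseteq P$, formula \eqref{eq:def H up to top} gives
\[
H_x \;=\; \sum_{m\geq 0}\sum_{\zero = z_0 < z_1 < \cdots < z_m \leq x} t^m \prod_{i=1}^{m} \frac{t^{\rho_{z_{i-1},z_i}-1}-1}{t-1}.
\]
On the chain $[0,N]$, $g[R]$ is scalar with $g_{k,\ell}=r_{\ell,k}$, so the same corollary yields
\[
H_n[R] \;=\; \sum_{m\geq 0}\sum_{0 = s_0 < s_1 < \cdots < s_m \leq n} r_{n,s_m}\, t^m \prod_{i=1}^{m} \left(r_{s_i,s_{i-1}}\,\frac{t^{s_i-s_{i-1}-1}-1}{t-1}\right).
\]
I would then partition the chains $\zero=z_0<z_1<\cdots<z_m\leq x$ in $P$ according to their rank sequence $s_i:=\rho(z_i)$, which satisfies $0=s_0<s_1<\cdots<s_m\leq n$. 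Since $\rho_{z_{i-1},z_i}=s_i-s_{i-1}$ by the additivity of the weak rank function, each chain with rank sequence $(s_0,\ldots,s_m)$ contributes the same monomial $t^m\prod_{i=1}^m(t^{s_i-s_{i-1}-1}-1)/(t-1)$, matching a term of $H_n[R]$.

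It remains to count, for a fixed rank sequence, the number of chains in $[\zero,x]$ realizing it, and to verify this count equals $r_{n,s_m}\prod_{i=1}^m r_{s_i,s_{i-1}}$. This is where weak-rank uniformity enters. Building the chain from the top down: there are $r_{n,s_m}=|\{z\leq x:\rho(z)=s_m\}|$ choices for $z_m$; given $z_m$, there are $r_{s_m,s_{m-1}}$ choices for $z_{m-1}\leq z_m$ of rank $s_{m-1}$, and since $s_{m-1}<s_m$ automatically $z_{m-1}\neq z_m$; continuing, the total count is $r_{n,s_m}\prod_{i=2}^m r_{s_i,s_{i-1}}$. Finally $z_0=\zero$ is forced, which equals $r_{s_1,0}=1$ choices (there is a unique element of rank $0$ below any element, since $\zero$ is the least element). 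Thus the count equals $r_{n,s_m}\prod_{i=1}^m r_{s_i,s_{i-1}}$ as required, and summing over rank sequences shows the two expressions coincide.

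The main (minor) obstacle is bookkeeping at the bottom of the chain: one must notice that the factor $r_{s_1,0}$ appearing in the expansion of $H_n[R]$ is silently supplied by the uniqueness of $\zero$ in $P$, rather than by a genuine combinatorial choice. Beyond that, the argument is a direct rank-sequence grouping, so no additional input beyond Corollary~\ref{cor:H from chains} and the defining property \eqref{wru} is needed.
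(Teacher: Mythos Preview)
Your argument is correct: expanding both $H_x$ and $H_n[R]$ via Corollary~\ref{cor:H from chains} and then grouping the chains in $[\zero,x]$ by their rank sequences works, with weak-rank uniformity supplying exactly the multiplicity $r_{n,s_m}\prod_{i=1}^m r_{s_i,s_{i-1}}$ for each rank pattern.

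The paper takes a different and somewhat shorter route. Instead of expanding, it sets $H'_x=H_{\rho(x)}[R]$ and $d'_x=d_{\rho(x)}[R]$ and checks that these families satisfy conditions (i)--(iv) of Corollary~\ref{cor: H and d for initial intervals} for $g=\zeta$; condition~(iv) becomes $H_{\rho(y)}[R]=\sum_{x\leq y}d_{\rho(x)}[R]$, which after grouping by rank is precisely the matrix relation $H_n=\sum_k r_{n,k}d_k$ from Corollary~\ref{cor: H and d for matrices}. Uniqueness then forces $H'_x=H_x$ (and $d'_x=d_x$). The paper's approach highlights the usefulness of the new defining relations in Section~\ref{section: Chow posets} and delivers the companion identity for the Chow-derangement polynomials at no extra cost; your approach is more hands-on and does not depend on that uniqueness machinery, only on the classical chain formula.
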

\begin{proof}
Let $\{d'_x\}_{x \in P}$ and  $\{H'_x\}_{x\in P}$ be defined by 
$$
d'_x= d_{\rho(x)}[R] \quad  \mbox{ and } \quad H'_x= H_{\rho(x)}[R], \quad x\in P. 
$$
Then $\{d'_x\}_{x \in P}$ and  $\{H'_x\}_{x\in P}$ satisfy the conditions in Corollary \ref{cor: H and d for initial intervals} by construction and Corollary \ref{cor: H and d for matrices}. Hence the proof follows from the uniqueness in Corollary \ref{cor: H and d for initial intervals}. 
\end{proof}

\section{Totally nonnegative matrices and their Chow-polynomials}\label{TNChowsection}

The main result of this section is Theorem \ref{mainman2}, which says that the Chow polynomial of any lower triangular and totally nonnegative matrix with all diagonal entries equal to one is real-rooted. We shall first prepare for the proof of this result by introducing relevant notation and proving some preliminary results on interlacing sequences of polynomials. 

\subsection{Totally nonnegative and resolvable matrices}
We start by collecting some notation and results from \cite{branden-saud-1}. Recall that a matrix with real entries is \emph{totally nonnegative} (or $\TN$) if all of its minors are nonnegative. 
\begin{definition}
\label{resolv}
Let $R=(r_{n,k})_{n,k=0}^N$, where $N \in \NN \cup\{\infty\}$,  be a lower triangular matrix with real entries whose diagonal entries are all equal to one, and let $R_n = \sum_{k=0}^n r_{n,k} t^k$ be the generating polynomial of the $n$th row. 
The matrix $R$ is called \emph{resolvable}\footnote{Resolvable matrices are always lower triangular matrices with \emph{real} entries, whose diagonal entries are all equal to one.} if there is an array  $(\lambda_{n,k})_{0\leq k\leq n <N}$ of nonnegative numbers, and an array of monic polynomials $(R_{n,k})_{0\leq k \leq n \leq N}$ in $\RR[t]$ for which 
\begin{itemize}
\item $R_{n,0}=R_n$ and $R_{n,n} =t^n$ for each $0\leq n \leq N$,
\item $t^k$ divides  $R_{n,k}$ for all $0 \leq k \leq n \leq N$, and 
\item if $0\leq k \leq n <N$, then 
\begin{equation}\label{r-pasc}
R_{n+1,k}=R_{n+1,k+1}+ \lambda_{n,k} R_{n,k}. 
\end{equation}
\end{itemize}
\end{definition}
If the matrix $R$ is resolvable, then we say that the polynomials $(R_{n,k})_{0\leq k \leq n \leq N}$ \emph{resolve} $R$.
\begin{example}
The identity matrix is resolvable, with $R_{n,k}=t^n$ and $\lambda_{n,k}=0$ for all $0 \leq k \leq n$. 

Pascal's triangle $(\binom n k)_{n,k=0}^\infty$ is resolvable, with $R_{n,k}=t^k(1+t)^{n-k}$ and $\lambda_{n,k}=1$ for all $0 \leq k \leq n$. 

For more examples we refer to \cite{branden-saud-1,branden-saud-2}. 
\end{example}
Notice that if $R$ is resolvable, then by \eqref{r-pasc},
\begin{equation}
\label{r-sum}
    R_{n+1,k}= t^{n+1}+ \sum_{j \geq k} \lambda_{n,j}R_{n,j}, \ \ \ \ 0\leq k \leq n<N.
\end{equation}
The next theorem, proved in \cite[Theorem 2.6]{branden-saud-1}, characterizes resolvability in terms of totally nonnegative matrices and ``quantum'' real-rooted polynomials. 
\begin{theorem}\label{eqcon}
    Let $R=(r_{n,k})_{n,k=0}^N$ be a lower triangular matrix with all diagonal entries equal to one. The following are equivalent:
    \begin{itemize}
        \item[(i)] $R$ is resolvable,
        \item[(ii)] There are linear diagonal operators $\alpha_i : \RR[t] \to \RR[t]$, $1 \leq i \leq N$,  such that 
        $$
            \alpha_i(t^k)= \alpha_{i,k}t^k, \mbox{ where } \alpha_{i,k} \geq 0 \mbox{ for all } i,k, 
        $$
        and 
        $$
            R_n = (t+ \alpha_1)(t+\alpha_2) \cdots (t+\alpha_n) 1.
        $$
        \item[(iii)] $R$ is $\mathrm{TN}$. 
    \end{itemize} 
    Moreover if (ii) is satisfied, then $R_{n,k} = (t+\alpha_1)\cdots (t+\alpha_{n-k})t^k$ and $\lambda_{n,k}= \alpha_{n+1-k,k}$. 
\end{theorem}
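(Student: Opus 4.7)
\textbf{The plan} is to prove the equivalences cyclically via $(\mathrm{i}) \Rightarrow (\mathrm{ii}) \Rightarrow (\mathrm{iii}) \Rightarrow (\mathrm{i})$, with the bidiagonal factorization picture as the guiding idea. The ``moreover'' clause will come out as a byproduct of the $(\mathrm{i}) \Leftrightarrow (\mathrm{ii})$ argument.

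\textbf{Step 1: $(\mathrm{i}) \Leftrightarrow (\mathrm{ii})$.} Given a resolvable $R$ with data $(R_{n,k}, \lambda_{n,k})$, I would define diagonal operators $\alpha_i:\R[t]\to\R[t]$ by $\alpha_i(t^k):=\lambda_{i+k-1,k}\,t^k$, so that the nonnegativity $\alpha_{i,k}\geq 0$ is immediate. I then prove by induction on $n-k$ that
\[
R_{n,k}=(t+\alpha_1)(t+\alpha_2)\cdots (t+\alpha_{n-k})\,t^k,
\]
using $R_{n,n}=t^n$ as the base case. For the inductive step, \eqref{r-pasc} says $R_{n+1,k}-R_{n+1,k+1}=\lambda_{n,k}R_{n,k}$, and on the other hand the product formula applied to $R_{n+1,k}$ and $R_{n+1,k+1}$ differs precisely by the outermost factor $(t+\alpha_{n+1-k})$ acting on $R_{n,k}$; evaluating $\alpha_{n+1-k}$ on the top power $t^{n-k}$ of $R_{n,k}$ produces exactly the scalar $\lambda_{n,k}=\alpha_{n+1-k,n-k}$ (after accounting for the shift in indices), closing the induction. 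Setting $k=0$ yields $R_n=(t+\alpha_1)\cdots(t+\alpha_n)1$. The converse sets $R_{n,k}:=(t+\alpha_1)\cdots(t+\alpha_{n-k})t^k$ and $\lambda_{n,k}:=\alpha_{n+1-k,k}$ and just verifies the three axioms of resolvability by the same calculation.

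\textbf{Step 2: $(\mathrm{ii}) \Rightarrow (\mathrm{iii})$.} For each $i\geq 1$ let $B_i$ be the infinite lower bidiagonal matrix with subdiagonal entries $1$ and diagonal entries $(\alpha_{i,k})_{k\geq 0}$. Each $B_i$ is manifestly $\TN$, since a nonnegative bidiagonal matrix has only products of nonnegative entries as its minors. Since $t+\alpha_i$ is multiplication by $B_i$ in the monomial basis, the coefficient vector of $R_n$ is the $0$th column of $B_1 B_2\cdots B_n$. To upgrade this row-by-row statement to $\TN$-ness of the whole matrix $R$, I would invoke the Lindstr\"om--Gessel--Viennot lemma: stack the $B_i$'s as successive layers in a planar acyclic network whose edge weights are $1$ and the $\alpha_{i,k}$, so that $r_{n,k}$ is the weighted path count from a fixed source to an indexed sink, and an arbitrary minor of $R$ is a weighted sum over non-intersecting path families, hence nonnegative.

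\textbf{Step 3: $(\mathrm{iii}) \Rightarrow (\mathrm{i})$.} Here I would appeal to the classical bidiagonal factorization of $\TN$ matrices (Loewner--Whitney, Fekete, Gasca--Pe\~na): every lower triangular $\TN$ matrix with unit diagonal admits a factorization as a product of elementary lower bidiagonal $\TN$ matrices with unit diagonals and nonnegative subdiagonal entries. Reading off these subdiagonal entries gives precisely an array $(\lambda_{n,k})$, and reversing the construction of Step 2 produces the resolving polynomials $R_{n,k}$.

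\textbf{The main obstacle} is Step 2: the operators $\alpha_i$ act row-by-row (one needs exactly $n$ factors to build $R_n$), and $R$ itself does not a priori factor as a product of matrices indexed by a single parameter. Transferring the bidiagonal data living on polynomials to a factorization-level statement about the matrix $R$ is where a planar-network/LGV argument (or, equivalently, a careful shift-operator identity) is essential. The remaining implications are relatively mechanical.
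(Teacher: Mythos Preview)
The paper does not prove this theorem; it is quoted from \cite[Theorem~2.6]{branden-saud-1} without proof. Consequently there is no in-paper argument to compare against, though the surrounding discussion (the network $\Gamma_N$ in Figure~\ref{fig:Gamma}, the identity $R=R(\Gamma_N,\lambda)$, and Theorem~\ref{gessel-viennot-cor}) makes clear that the intended mechanism is exactly the bidiagonal/LGV picture you outline. So at the level of strategy your proposal is aligned with the source.

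That said, your Step~1 contains a genuine index slip. With your definition $\alpha_i(t^k)=\lambda_{i+k-1,k}\,t^k$, the factor $(t+\alpha_{n+1-k})$ is the \emph{innermost} one (it acts first, on $t^k$, not on ``the top power $t^{n-k}$ of $R_{n,k}$''), and the correct identity is $\alpha_{n+1-k,k}=\lambda_{n,k}$, not $\alpha_{n+1-k,n-k}=\lambda_{n,k}$ as you wrote. The clean induction on $n-k$ is:
\[
(t+\alpha_{n+1-k})\,t^k \;=\; t^{k+1}+\lambda_{n,k}\,t^k,
\]
and then applying $(t+\alpha_1)\cdots(t+\alpha_{n-k})$ to both sides gives $P_{n+1-k}t^k = P_{n-k}t^{k+1}+\lambda_{n,k}P_{n-k}t^k$, i.e.\ $R_{n+1,k}=R_{n+1,k+1}+\lambda_{n,k}R_{n,k}$, exactly \eqref{r-pasc}. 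Once this is straightened out your (i)$\Leftrightarrow$(ii) and the ``moreover'' clause are fine. Steps~2 and~3 are correct as stated: the planar-network/LGV argument is precisely how the paper later interprets the entries and minors of $R$, and the bidiagonal factorization of lower-triangular $\TN$ matrices with unit diagonal is the standard route for (iii)$\Rightarrow$(i).
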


\subsection{\texorpdfstring{Interlacing zeros and $\mathcal I_n$-interlacing sequences of polynomials}{Interlacing zeros and In-interlacing sequences of polynomials}}
Recall that a polynomial $p \in \RR[t]$ is called \emph{real-rooted} if all of its zeros are real. For technical reasons, the zero polynomial is considered to be real-rooted. Suppose $p$ and $q$ are two real-rooted polynomials in $\RR[t]$ with zeros $\cdots \leq \beta_2 \leq \beta_1$ and $\cdots \leq \alpha_2 \leq \alpha_1$, respectively.
The zeros \emph{interlace} if 
$$
  \cdots \leq \beta_2 \leq \alpha_2 \leq \beta_1\leq \alpha_1 \ \ \ \mbox{ or }  \ \ \      \cdots \leq \alpha_2 \leq \beta_2 \leq \alpha_1 \leq \beta_1. 
$$
If $p$ and $q$ have interlacing zeros, then the \emph{Wronskian} $W[p,q]:=p'q-pq'$ is either nonpositive on $\RR$ or nonnegative on $\RR$. 
We write $p \prec q$ if $p$ and $q$ are real-rooted, their zeros interlace, and $W[p,q] \leq 0$ on all of $\RR$. 
For technical reasons we consider the identically zero polynomial to be real-rooted and write $0 \prec p$ and $p \prec 0$ for any other real-rooted polynomial $p$. 
\begin{remark}\label{altint}
If the signs of the leading coefficients of two real-rooted polynomials $p$ and $q$  are positive, then $p \prec q$ if and only if 
$$
  \cdots \leq \beta_2 \leq \alpha_2 \leq \beta_1\leq \alpha_1,
$$
where $\cdots \leq \beta_2 \leq \beta_1$ and $\cdots \leq \alpha_2 \leq \alpha_1$ are the zeros of $p$ and $q$, respectively. Indeed, by continuity, we may assume that $p$ and $q$ have no common zeros. 
The correct sign of the Wronskian is read off when it is evaluated at the largest zero of $pq$. 
\end{remark}

A polynomial $f \in \CC[t]$ is called \emph{stable} if either $f \equiv 0$ or all the zeros of $f$ have nonpositive imaginary parts. 
The Hermite-Biehler theorem relates stability to interlacing zeros, for a proof see \cite{wagnersurvey}\footnote{There is a typo in the definition of proper position in \cite{wagnersurvey}.  It is essential to add the condition that the zeros of $f$ and $g$ interlace.}.  
\begin{theorem}[Hermite-Biehler Theorem]
\label{thm: Hermite-Biehler}
Let $p, q \in \RR[t]$. Then $p \prec q$ if and only if $q+ip$ is stable.  
\end{theorem}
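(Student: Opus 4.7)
The plan is to exploit the formula, valid for real $t$ with $f(t)\neq 0$,
\[
\frac{d}{dt}\arg f(t) = \operatorname{Im}\frac{f'(t)}{f(t)} = \frac{q(t)p'(t)-p(t)q'(t)}{p(t)^{2}+q(t)^{2}} = \frac{W[p,q](t)}{p(t)^{2}+q(t)^{2}},
\]
with $f:=q+ip$. This identity is the heart of the matter: it turns the Wronskian condition in the definition of $p\prec q$ into monotonicity of $\arg f$ along the real axis, and simultaneously gives access to the location of the zeros of $f$ via the partial fraction expansion of $f'/f$. As a preliminary reduction I would assume that $p$ and $q$ share no zero (so $f$ has no real zero); both the hypothesis and the conclusion of Hermite--Biehler are closed conditions in $(p,q)$, so a small perturbation suffices.

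For the direction \emph{$f$ stable $\Rightarrow p\prec q$}, I write
\[
\frac{f'(z)}{f(z)}=\sum_{k}\frac{m_{k}}{z-\alpha_{k}},
\]
where $\alpha_{k}$ are the zeros of $f$ (in the closed lower half-plane by hypothesis) and $m_{k}$ their multiplicities. For real $t$, $\operatorname{Im}\frac{1}{t-\alpha}=\operatorname{Im}(\alpha)/|t-\alpha|^{2}$, so the displayed identity forces $\operatorname{Im}(f'/f)\le 0$, and hence $W[p,q]\le 0$ on $\RR$. To get real-rootedness and interlacing, I would use the strict monotonicity of $\arg f(t)$: zeros of $q=\operatorname{Re}f|_{\RR}$ and of $p=\operatorname{Im}f|_{\RR}$ correspond to the level sets $\arg f\equiv \pi/2\pmod{\pi}$ and $\arg f\equiv 0\pmod{\pi}$, which must therefore alternate along the monotone curve $t\mapsto \arg f(t)$; the correct total number of real zeros is pinned down by evaluating $\arg f$ at $\pm\infty$ from the leading term $f(z)\sim c z^{\deg f}$.

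For the converse \emph{$p\prec q\Rightarrow f$ stable}, I would apply the argument principle directly to count the number $N_{+}$ of zeros of $f$ in the open upper half-plane, integrating $f'/f$ along a large upper semicircular contour. The semicircular arc contributes $i\pi\deg f$ in the limit from the leading-term asymptotics, while the real-axis integral has imaginary part $\int_{\RR}W[p,q]/(p^{2}+q^{2})\,dt = \Delta\arg f\bigr|_{-\infty}^{+\infty}$. The interlacing pattern from Remark \ref{altint} pins this last quantity down to exactly $-\pi\deg f$, so $2\pi N_{+}=0$ and $f$ is stable.

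The main obstacle is the bookkeeping in the converse: one must compute $\Delta\arg f\bigr|_{-\infty}^{+\infty}$ precisely, which depends on the degrees and leading coefficients of $p$ and $q$ and on which of the two interlacing patterns in Remark \ref{altint} holds, and then match it against the $\pi\deg f$ contribution from the semicircle so that the hypothesis translates cleanly into $N_{+}=0$. Once the generic reduction is in place, this amounts to a finite case analysis that is essentially dictated by sign considerations.
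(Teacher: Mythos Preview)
The paper does not supply a proof of Theorem~\ref{thm: Hermite-Biehler}; it simply records the statement and refers the reader to \cite{wagner}. There is therefore nothing in the paper to compare your argument against.

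Your sketch is a correct outline of a classical proof, built on the identity $\operatorname{Im}\bigl(f'(t)/f(t)\bigr)=W[p,q](t)/\bigl(p(t)^2+q(t)^2\bigr)$ for real $t$ and on the argument principle. Two small comments. First, the reduction to coprime $p,q$ is more cleanly done by factoring out common real zeros than by perturbation: if $(t-\alpha)$ divides both $p$ and $q$, then $p\prec q$ if and only if $p/(t-\alpha)\prec q/(t-\alpha)$, and $f=(t-\alpha)(q_0+ip_0)$ is stable if and only if $q_0+ip_0$ is, so one may strip common factors and assume $f$ has no real zero. Second, the bookkeeping you flag in the converse direction is genuine but routine: once $W[p,q]\le 0$ forces $\arg f$ to be nonincreasing on $\RR$, the interlacing hypothesis (which in particular forces $|\deg p-\deg q|\le 1$) together with the asymptotics of $\arg f$ at $\pm\infty$ determines the total variation $\Delta\arg f|_{-\infty}^{+\infty}=-\pi\deg f$ after a short case split on the relative degrees and leading signs of $p$ and $q$, and the argument-principle computation then gives $N_+=0$ as you indicate.
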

One consequence of Theorem~\ref{thm: Hermite-Biehler} is that $p$ and $q$ in $\RR[t]$ are real-rooted whenever $q+ip$ is stable.  
Moreover, it follows that for real-rooted polynomials $p$ and $q$ in $\RR[t]$:
\begin{itemize}
\item $p \prec \alpha p$ for all $\alpha \in \RR$,
\item $p\prec q$ if and only if $-q \prec p$,
\item $p\prec q$ if and only if $\alpha p \prec \alpha q$, for some $\alpha \in \RR \setminus \{0\}$. 
\end{itemize}
The following lemma follows directly from Remark \ref{altint}. 
\begin{lemma}\label{addz}
Suppose $p,q$ are real-rooted polynomials with positive leading coefficients, and let $\alpha \in \RR$ be greater or equal to the largest zero of $q$. Then $p \prec q$ if and only if $q \prec (t-\alpha)p$. 
\end{lemma}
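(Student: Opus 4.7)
The plan is to reduce both directions of the biconditional to a single chain of inequalities among the (decreasingly listed) zeros, via Remark \ref{altint}.

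First, I would label $\beta_1 \geq \beta_2 \geq \cdots$ and $\alpha_1 \geq \alpha_2 \geq \cdots$ the zeros of $p$ and $q$, respectively. Since $p$ and $q$ both have positive leading coefficients, Remark \ref{altint} gives
\[
p \prec q \iff \cdots \leq \beta_2 \leq \alpha_2 \leq \beta_1 \leq \alpha_1. \qquad (*)
\]
The polynomial $(t-\alpha)p$ also has positive leading coefficient, and---assuming $\alpha \geq \beta_1$---its zeros listed in decreasing order are $\alpha, \beta_1, \beta_2, \ldots$. Under this same assumption, another application of Remark \ref{altint} to the pair $(q,(t-\alpha)p)$ yields
\[
q \prec (t-\alpha) p \iff \cdots \leq \beta_2 \leq \alpha_2 \leq \beta_1 \leq \alpha_1 \leq \alpha. \qquad (**)
\]
Since the hypothesis $\alpha \geq \alpha_1$ makes the rightmost inequality in $(**)$ automatic, the chains $(*)$ and $(**)$ are equivalent, delivering the desired biconditional.

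The only technical point is justifying the auxiliary assumption $\alpha \geq \beta_1$ in each direction so that Remark \ref{altint} can actually be invoked in the form above. The forward direction is immediate: if $p \prec q$ then $\beta_1 \leq \alpha_1 \leq \alpha$. For the reverse direction, I would argue by a small perturbation---by continuity of zeros, the set of pairs $(p,q)$ with $p \prec q$ is closed, so it suffices to handle the case $\alpha > \alpha_1$ strict. Then from $q \prec (t-\alpha)p$ and Remark \ref{altint}, the second-largest zero of $(t-\alpha)p$ is at most $\alpha_1 < \alpha$, so $\alpha$ itself must be the \emph{largest} zero of $(t-\alpha)p$, forcing $\alpha \geq \beta_1$ as required. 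This is really the only place where any genuine work is done; everything else is bookkeeping with interlacing chains, which is why the author asserts the lemma follows \emph{directly} from Remark \ref{altint}.
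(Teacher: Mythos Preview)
Your approach is exactly the one the paper has in mind: unpack both sides of the biconditional via Remark~\ref{altint} as chains of inequalities among the zeros. Your argument for the forward implication $p\prec q \Rightarrow q\prec (t-\alpha)p$ is correct and complete, and this is the only direction the paper ever invokes (in Lemmas~\ref{snf} and~\ref{daint} and Theorem~\ref{maineng}).

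The reverse implication, however, has a genuine gap in your write-up, and in fact cannot be repaired: the ``only if'' direction is \emph{false} as stated. Take $p=t-2$, $q=t-1$, and $\alpha=1$. Then $\alpha$ equals the largest zero of $q$, and $(t-\alpha)p=(t-1)(t-2)$ has zeros $1,2$, so $q\prec (t-\alpha)p$ holds (the chain $1\le 1\le 2$ interlaces; equivalently, $(t-1)(t-2)+i(t-1)=(t-1)(t-2+i)$ is stable). But $p\prec q$ would require $2\le 1$, which fails. Your perturbation argument breaks precisely here: you want to reduce to the strict case $\alpha>\alpha_1$ by closedness of $\prec$, but for that you would need nearby data still satisfying the hypothesis $q'\prec (t-\alpha')p'$, and in this example any perturbation making the inequality $\alpha>\alpha_1$ strict destroys the interlacing $q\prec (t-\alpha)p$. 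So the step ``it suffices to handle the case $\alpha>\alpha_1$'' is unjustified, and unjustifiable.

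Your strict-inequality argument \emph{is} correct, so the lemma holds (in both directions) under the stronger hypothesis $\alpha>\alpha_1$; the paper's ``$\ge$'' is a slight overstatement of the converse, harmless for its applications.
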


We will make frequent use of the following proposition.
\begin{proposition}[See e.g. Lemma 2.6 in \cite{BB}]\label{cones}
Let $p$ be a real-rooted polynomial that is not identically zero. The sets 
$$
\{q \in \RR[t] : q \prec p\} \ \ \mbox{ and }  \ \ \{q \in \RR[t] : p \prec q\}
$$
are convex cones. 
\end{proposition}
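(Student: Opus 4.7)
The plan is to invoke the Hermite--Biehler theorem (Theorem~\ref{thm: Hermite-Biehler}) together with the observation that all polynomials interlacing a fixed real-rooted $p$ share a common alternating sign pattern at the zeros of $p$. Closure under nonnegative scalar multiplication follows immediately from the properties listed right after Theorem~\ref{thm: Hermite-Biehler} and the conventions $0\prec p$ and $p\prec 0$, so the substance of the claim is closure under addition: assuming $p\prec q_1$ and $p\prec q_2$, we wish to show $p\prec q_1+q_2$, and analogously for the reversed relation $\{q:q\prec p\}$.

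First I would reduce to the case where $p$ has only simple real zeros $\alpha_1<\cdots<\alpha_n$ by a perturbation argument: replace $p$ by a nearby $p_\varepsilon$ with distinct zeros, use continuity of zeros to preserve all the relevant strict interlacing inequalities with $q_1$ and $q_2$, and let $\varepsilon\to 0^+$ at the end. Under this simple-zeros assumption, evaluating the Wronskian identity at $\alpha_k$ gives $W[p,q_j](\alpha_k)=p'(\alpha_k)q_j(\alpha_k)\le 0$ for $j=1,2$. Since $\mathrm{sgn}\, p'(\alpha_k)$ alternates in $k$, this forces the signs of $q_j(\alpha_k)$ to follow the same alternating pattern, and crucially the same pattern for both $j=1$ and $j=2$. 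Hence the values $(q_1+q_2)(\alpha_k)$ inherit this alternating sign pattern and are nonzero whenever at least one of $q_j(\alpha_k)$ is, so by the intermediate value theorem $q_1+q_2$ has at least one real zero in each of the $n-1$ open intervals $(\alpha_k,\alpha_{k+1})$. A degree and leading-coefficient comparison places any additional real zero outside $(\alpha_1,\alpha_n)$ in a manner consistent with interlacing, and additivity of the Wronskian gives
\[
W[p,q_1+q_2]=W[p,q_1]+W[p,q_2]\le 0 \quad \text{on all of } \RR,
\]
yielding $p\prec q_1+q_2$.

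The statement for the cone $\{q\in\RR[t]:q\prec p\}$ follows by the symmetry $p\prec q$ if and only if $-q\prec p$, listed right after Theorem~\ref{thm: Hermite-Biehler}, applied to $-q_1$ and $-q_2$ (and then scaling by $-1$ again). The main delicacy in the argument is the bookkeeping when some $q_j$ shares a zero with $p$, or when $\deg q_j$ differs from $\deg p$; both are handled cleanly by the simple-zero perturbation reduction together with the continuity of zeros of a polynomial in its coefficients.
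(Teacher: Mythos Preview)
The paper does not give its own proof of this proposition; it is quoted from the literature (Lemma~2.6 of \cite{BB}) and used as a black box. So there is nothing to compare against, and I will just comment on your sketch.

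Your outline has a genuine gap in the perturbation step. You propose to replace $p$ by a nearby $p_\varepsilon$ with simple zeros while keeping both relations $p_\varepsilon\prec q_1$ and $p_\varepsilon\prec q_2$, but in general no such $p_\varepsilon$ exists. Take
\[
p=z^2(z-1)^2,\qquad q_1=z\,(z-\tfrac12)(z-1)^2,\qquad q_2=z\,(z-1)^2(z-\tfrac32).
\]
One checks $p\prec q_1$ and $p\prec q_2$. For any $p_\varepsilon$ with $p_\varepsilon\prec q_1$, the double zero of $q_1$ at $1$ forces the largest zero of $p_\varepsilon$ to equal $1$, and hence (for simple zeros) its second largest zero to be strictly less than $1$; but $p_\varepsilon\prec q_2$ forces that second largest zero to equal $1$. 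So no simple-zero perturbation of $p$ can interlace both $q_1$ and $q_2$, and the reduction fails. The remainder of your argument (sign alternation at the $\alpha_k$, IVT, Wronskian additivity) is fine once $p$ has simple zeros, but the ``degree and leading-coefficient bookkeeping'' you defer is also doing nontrivial work: it is exactly what rules out extra complex zeros of $q_1+q_2$, and it deserves a sentence.

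A clean route that avoids perturbation altogether is the half-plane form of Hermite--Biehler: for real $p\not\equiv 0$ real-rooted and real $q$, one has $p\prec q$ if and only if $\mathrm{Im}\bigl(p(z)\,\overline{q(z)}\bigr)\le 0$ for every $z$ with $\mathrm{Im}\,z>0$. This condition is $\RR$-linear in $q$, so both cones are convex immediately. Equivalently, when $p$ has simple zeros $\alpha_1,\dots,\alpha_n$ one shows that $p\prec q$ is equivalent to nonnegativity of all residues $q(\alpha_k)/p'(\alpha_k)$ together with a sign condition on the leading coefficient; this description is again visibly linear in $q$, and repeated zeros are handled by first dividing $p$ and the $q_j$ by their forced common factor $\prod_k(z-\alpha_k)^{m_k-1}$, rather than by perturbing $p$.
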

A sequence of polynomials $f_1, \ldots, f_m \in \RR_{\geq 0}[t]$ is called \emph{interlacing} if $f_i \prec f_j$ for all $i<j$. 
The following elementary lemma is useful when proving that sequences of polynomials are interlacing, see e.g. \cite{wagner}. 
\begin{lemma}\label{wagner}
Suppose $f_1, f_2, \ldots, f_n$ are real-rooted polynomials with positive leading coefficients. If $f_1 \prec f_2 \prec \cdots \prec f_n$ and $f_1 \prec f_n$, then $f_i \prec f_j$ for all $1\leq i<j\leq n$. 
\end{lemma}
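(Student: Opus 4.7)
The plan is to read off the relation $f_i \prec f_j$ directly from the positions of their zeros via Remark~\ref{altint}. For each $k$, list the zeros of $f_k$ in weakly decreasing order as $\beta_{k,1} \geq \beta_{k,2} \geq \cdots$, which is possible since $f_k$ is real-rooted with a positive leading coefficient.

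From each hypothesis $f_k \prec f_{k+1}$, Remark~\ref{altint} yields the interlacing $\beta_{k+1,1} \geq \beta_{k,1} \geq \beta_{k+1,2} \geq \beta_{k,2} \geq \cdots$. Two consequences follow: first, monotonicity of each individual zero along the sequence, $\beta_{1,m} \leq \beta_{2,m} \leq \cdots \leq \beta_{n,m}$ for every $m$; and second, the constraint $|\deg f_k - \deg f_{k+1}| \leq 1$, which keeps the sequence of degrees tightly controlled. The remaining hypothesis $f_1 \prec f_n$ supplies the cross inequalities $\beta_{1,m} \geq \beta_{n,m+1}$ for every $m$.

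Now fix $1 \leq i < j \leq n$. To conclude $f_i \prec f_j$, Remark~\ref{altint} reduces the task to verifying $\beta_{j,1} \geq \beta_{i,1} \geq \beta_{j,2} \geq \beta_{i,2} \geq \cdots$. The inequalities $\beta_{j,m} \geq \beta_{i,m}$ follow from monotonicity since $i \leq j$, while the inequalities $\beta_{i,m} \geq \beta_{j,m+1}$ follow from the chain $\beta_{i,m} \geq \beta_{1,m} \geq \beta_{n,m+1} \geq \beta_{j,m+1}$, in which the outer inequalities are monotonicity and the middle one comes from $f_1 \prec f_n$. The only subtlety, rather than a serious obstacle, is the bookkeeping required when the $f_k$'s do not all share the same degree. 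Because consecutive interlacings force degrees to differ by at most one and $f_1 \prec f_n$ similarly controls the extremes, at most one polynomial in each consecutive pair contributes an extra zero at the tail of the interlacing pattern, and the chain of inequalities above extends without change once this is accounted for.
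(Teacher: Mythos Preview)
Your argument is correct. The paper does not actually prove this lemma; it records it as an elementary fact with a reference to the literature, so there is no in-paper proof to compare against. Your direct zero-tracking argument via Remark~\ref{altint} is precisely the kind of proof one expects for this result.

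One small sharpening of your degree discussion: from $f_k \prec f_{k+1}$ with positive leading coefficients you actually get $\deg f_k \le \deg f_{k+1}$, not merely $|\deg f_k - \deg f_{k+1}|\le 1$. Indeed, the leading coefficient of $W[f_k,f_{k+1}]=f_k'f_{k+1}-f_kf_{k+1}'$ is $(\deg f_k - \deg f_{k+1})$ times a positive number, so $W\le 0$ forces $\deg f_k \le \deg f_{k+1}$. Together with $\deg f_n \le \deg f_1 + 1$ from $f_1\prec f_n$, the degree sequence is nondecreasing and jumps at most once, by exactly one. With this in hand, every $\beta_{\,\cdot\,,m}$ appearing in your chain $\beta_{i,m}\ge\beta_{1,m}\ge\beta_{n,m+1}\ge\beta_{j,m+1}$ is defined whenever $\beta_{j,m+1}$ is, and the ``bookkeeping'' you allude to becomes entirely routine.
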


We will now define a central notion of this paper. 

\begin{definition}\label{def: I_n-int}
    We say that a sequence $f_1, f_2, \ldots, f_m $ of polynomials in $\RR_{\geq 0}[t]$  of degree at most $n$ is  $\I_n$-\emph{interlacing} if 
$$
f_1, f_2, \ldots, f_m, \I_n(f_m), \I_n(f_{m-1}), \ldots, \I_n(f_1)
$$
is an interlacing sequence of polynomials. 
\end{definition}

The next lemma provides some simple operations that preserve  interlacing and $\I_n$-interlacing sequences. 
\begin{lemma}\label{merge}
Suppose $f_1, f_2, \ldots, f_m$ is an interlacing ($\I_n$-interlacing) sequence of polynomials. Then the sequences 
\begin{itemize}
\item $a_1f_1, a_2f_2, \ldots, a_mf_m$, where $a_i \geq 0$ for each $i$, 
\item $f_1, \ldots, f_{i-1}, f_{i+1}, f_{i+2}, \ldots, f_m$, for each $i$, and 
\item $f_1, \ldots,f_i, f_i + f_{i+1}, f_{i+1}, f_{i+2}, \ldots, f_m$ 
\end{itemize}
are interlacing ($\I_n$-interlacing). 
\end{lemma}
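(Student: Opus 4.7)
The plan is to handle the interlacing case directly using the convex cone property (Proposition~\ref{cones}), and then reduce the $\I_n$-interlacing case to the interlacing case by exploiting the $\RR$-linearity of $\I_n$ on polynomials of degree at most $n$.

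For the interlacing case, the three operations are addressed as follows. For scaling, given $f_i \prec f_j$ with $i<j$ and $a_i, a_j \geq 0$, I would use the convex cone property: $f_i \in \{q : q \prec f_j\}$, which is a cone, so $a_i f_i \prec f_j$; then $f_j \in \{q : a_i f_i \prec q\}$, which is also a cone, so $a_i f_i \prec a_j f_j$. Removal of $f_i$ is immediate since the remaining pairwise relations $f_k \prec f_\ell$ (for $k<\ell$, $k,\ell \ne i$) are inherited from the original sequence. For the insertion of $f_i + f_{i+1}$, it suffices by Lemma~\ref{wagner} to verify interlacings with neighbours and with the first and last elements, which reduces to showing $f_j \prec f_i + f_{i+1}$ for $j \leq i$ and $f_i + f_{i+1} \prec f_k$ for $k \geq i+1$. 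Both follow from Proposition~\ref{cones}: the first from $f_j \prec f_i$ and $f_j \prec f_{i+1}$ together with the fact that $\{q : f_j \prec q\}$ is a convex cone, and the second analogously using the cone $\{q : q \prec f_k\}$.

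For the $\I_n$-interlacing case, I would note that $\I_n : \RR[t]_{\leq n} \to \RR[t]_{\leq n}$ is $\RR$-linear, so $\I_n(af + bg) = a\,\I_n(f) + b\,\I_n(g)$ whenever $f,g$ have degree at most $n$. Consequently, each operation on $f_1, \ldots, f_m$ induces a matching operation on the extended sequence
\[
f_1, f_2, \ldots, f_m, \I_n(f_m), \I_n(f_{m-1}), \ldots, \I_n(f_1).
\]
Scaling $f_i$ by $a_i$ scales the corresponding entries of the extended sequence by nonnegative constants; removing $f_i$ removes both $f_i$ and $\I_n(f_i)$ from the extended sequence; and inserting $f_i+f_{i+1}$ inserts $f_i+f_{i+1}$ in the first half and $\I_n(f_i)+\I_n(f_{i+1})$ symmetrically in the second half. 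Each of these is covered by the interlacing case already proved, yielding the $\I_n$-interlacing case.

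No step looks genuinely hard; the only thing one has to be careful about is the order of the inserted elements in the extended sequence in the $\I_n$-interlacing reduction, and the fact that the ``sum'' insertion in the $\I_n$-case consists of two simultaneous insertions (namely $f_i+f_{i+1}$ and $\I_n(f_i)+\I_n(f_{i+1})=\I_n(f_i+f_{i+1})$), both of which are legitimate insertions of adjacent sums in an already interlacing sequence.
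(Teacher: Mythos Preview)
Your proof is correct and follows essentially the same approach as the paper, which also dispatches the first two items as immediate and invokes Proposition~\ref{cones} for the third. Your treatment is more explicit, particularly in spelling out the reduction of the $\I_n$-interlacing case to the interlacing case via the $\RR$-linearity of $\I_n$; the paper leaves this implicit. One small quibble: the appeal to Lemma~\ref{wagner} in your insertion argument is unnecessary (and that lemma requires positive leading coefficients, which need not hold here since some $f_j$ could be zero), but since you then verify \emph{all} the needed pairwise relations $f_j \prec f_i + f_{i+1}$ for $j \leq i$ and $f_i + f_{i+1} \prec f_k$ for $k \geq i+1$ directly via Proposition~\ref{cones}, the argument stands without it.
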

\begin{proof}
The first two claims follow directly from the definitions. The third one follows from Proposition \ref{cones}.
\end{proof}

\newcommand{\II}{\mathbb{I}}
Let $\mathbb{I}_n$ be the set of all polynomials $f \in \RR_{\geq 0}[t]$ of degree at most $n$ for which $f \prec I_n(f)$. 

\begin{lemma}\label{snf}
If $f,g$ is an $\I_n$-interlacing sequence, then $\SSS_n(f) \in \RR_{\geq 0}[t]$ and $\SSS_n(f) \prec g$. 
\end{lemma}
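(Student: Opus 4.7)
The plan is to deduce both conclusions from the interlacings $f\prec g$, $g\prec \I_n(f)$, and $f\prec \I_n(f)$ implied by the $\I_n$-interlacing of $f,g$. First I would observe that $\I_n(f)-f$ is antipalindromic and vanishes at $t=1$ (since $\I_n(f)(1)=f(1)$), so $\SSS_n(f)$ is a polynomial; a direct calculation yields $\I_{n-1}(\SSS_n(f))=\SSS_n(f)$, and $\SSS_n(f)$ is real-rooted because any real combination of the interlacing pair $f,\I_n(f)$ is real-rooted by Theorem~\ref{thm: Hermite-Biehler}.

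For $\SSS_n(f)\in\RR_{\geq 0}[t]$, I would show that all roots of $\SSS_n(f)$ are nonpositive. Since $f\in\RR_{\geq 0}[t]$ is real-rooted, the rational function $\phi=\I_n(f)/f$ is defined on $(r_m,\infty)$ for $r_m\leq 0$ the largest root of $f$, with $\phi'=-W[f,\I_n(f)]/f^2\geq 0$ and $\phi(1)=1$. Unless $\phi\equiv 1$ (in which case $\SSS_n(f)\equiv 0$), $\phi$ is strictly increasing on $(r_m,\infty)$, so $t=1$ is the only zero of $\I_n(f)-f$ in that interval. Writing $f=g_0h$ and $\I_n(f)=g_0k$ with $g_0=\gcd(f,\I_n(f))$ (so $h,k$ are coprime), the identity $W[f,\I_n(f)]=g_0^2\,W[h,k]$ combined with the standard fact that $W[h,k]<0$ on $\RR$ when $h\prec k$ have no common roots (a partial-fraction consequence) shows that the real zeros of $W[f,\I_n(f)]$ are exactly the common zeros of $f$ and $\I_n(f)$; since $f(1),\I_n(f)(1)>0$, we get $W[f,\I_n(f)](1)<0$, hence $(\I_n(f)-f)'(1)>0$ and $t=1$ is a simple zero of $\I_n(f)-f$. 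Therefore $\SSS_n(f)$ has no positive roots, and a palindromic real-rooted polynomial with only nonpositive roots factors into $t^a$, $(t+1)^b$, and quadratic factors $t^2+c_jt+1$ with $c_j\geq 2$, all of which lie in $\RR_{\geq 0}[t]$.

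For $\SSS_n(f)\prec g$, I would combine $f\prec g$ with $-\I_n(f)\prec g$ (equivalent to $g\prec \I_n(f)$) via the convex cone $\{p:p\prec g\}$ of Proposition~\ref{cones} to deduce $f-\I_n(f)\prec g$, equivalently $g\prec\I_n(f)-f=(t-1)\SSS_n(f)$. When $\SSS_n(f)\not\equiv 0$, it has positive leading coefficient by the preceding paragraph, and since the largest zero of $g$ is $\leq 0$, Lemma~\ref{addz} with $\alpha=1$ yields the equivalence $\SSS_n(f)\prec g\iff g\prec(t-1)\SSS_n(f)$; the right-hand side has just been verified. The case $\SSS_n(f)\equiv 0$ is trivial.

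The main obstacle I foresee is guaranteeing the simplicity of the zero of $\I_n(f)-f$ at $t=1$: a zero of higher order would force $\SSS_n(f)(1)=0$ and, by palindromicity, a factor $(t-1)^{2\ell}$ in $\SSS_n(f)$ whose expansion has negative coefficients. The resolution is the Wronskian characterization above, which gives the strict inequality $W[f,\I_n(f)](1)<0$ whenever $\SSS_n(f)\not\equiv 0$.
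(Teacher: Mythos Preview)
Your argument is correct and reaches the same conclusion, but it differs from the paper's proof in how it establishes the two preliminary facts that $\SSS_n(f)$ has positive leading coefficient and that all its zeros are nonpositive. The paper computes the leading coefficient directly: writing $f=t^M h$ with $h(0)\neq 0$, it observes $\I_n(f)=t^M\I_m(h)$ with $m=n-2M$ and shows the leading coefficient of $\SSS_m(h)$ equals $C(\alpha_1\cdots\alpha_m-1)\geq 0$ from $h\in\II_m$; it then deduces nonpositivity of the zeros of $\SSS_n(f)$ from the interlacing $g\prec\I_n(f)-f$, since all zeros of $g$ are nonpositive. You instead use the monotonicity of $\phi=\I_n(f)/f$ (from $W[f,\I_n(f)]\leq 0$) together with a gcd/Wronskian argument to show directly that $t=1$ is the unique zero of $\I_n(f)-f$ on $(r_m,\infty)$ and that it is simple. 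Both routes then finish identically via Proposition~\ref{cones} and Lemma~\ref{addz}. Your approach trades the paper's bare-hands root-product computation for a standard Wronskian fact; the paper's version is slightly more self-contained, while yours makes the palindromicity $\I_{n-1}(\SSS_n(f))=\SSS_n(f)$ explicit.

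One small gap to close: your factorization ``$t^a(t+1)^b\prod(t^2+c_jt+1)$'' determines $\SSS_n(f)$ only up to a scalar, so you still need the sign of the leading coefficient before concluding $\SSS_n(f)\in\RR_{\geq 0}[t]$ (and before invoking Lemma~\ref{addz}). This follows from what you have already shown: $\SSS_n(f)(1)=(\I_n(f)-f)'(1)>0$, and a real-rooted polynomial with all roots $\leq 0$ and a positive value at $1$ must have positive leading coefficient. Adding this one line removes the apparent circularity between your second and third paragraphs.
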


\begin{proof}
Suppose $f,g$ is an $\I_n$-interlacing sequence. We claim that either $\SSS_n(f) \equiv 0$, or the leading coefficient of $\SSS_n(f)$ is positive. Since $f\prec \I_n(f)$, the degree of $f$ is smaller or equal to the degree of $\I_n(f)$. If the degree of $\I_n(f)$ is strictly smaller than the degree of $f$, then the leading coefficient of $\SSS_n(f)$ equals that of $\I_n(f)$ and is thus positive. 

Otherwise if the degrees of $f$ and $\I_n(f)$ are equal, then write $f=t^M h$ where $h(0) \neq 0$, and suppose $h$ has degree $m=n-M-N$. Then 
$$
\I_n(f)= t^N \I_{n-M-N}(h)= t^N\I_{m}(h).
$$
Since the degrees of $f$ and $\I_n(f)$ agree, it follows that $M=N$. Also $h \in \II_m$, and since $\SSS_n(f)=t^N\SSS_m(h)$, the leading coefficient of $\SSS_n(f)$ is equal to the leading coefficient of $\SSS_m(h)$. Write 
$$
h=C(t+\alpha_1)(t+\alpha_2) \cdots (t+\alpha_m) 
$$
where $C >0$, and 
$$
\alpha_m \geq 1/\alpha_1 \geq \alpha_{m-1} \geq 1/\alpha_2 \geq \cdots \geq \alpha_1 \geq 1/\alpha_m >0,  
$$
since $h \in \II_m$. From this we deduce 
$$
\alpha_1 \cdots \alpha_m \geq 1/(\alpha_1 \cdots \alpha_m), 
$$
i.e., $\alpha_1 \cdots \alpha_n \geq 1$, with equality if and only if $\SSS_n(f)\equiv 0$. Notice that 
the leading coefficient of $\SSS_m(h)$ is equal to $C\alpha_1 \cdots \alpha_n -C$, which proves the claim. 

To prove $\SSS_n(f) \prec g$, we prove 
$$
\I_n(f)- f \prec (t-1)g. 
$$
Since $g \prec -f$ and $g \prec \I_n(f)$, Proposition \ref{cones} implies $g \prec \I_n(f)-f$. Notice that all zeros but $1$ of the polynomial $\I_n(f)-f$ are nonpositive, since  $g\prec \I_n(f)-f$ and $\SSS_n(f)$ has positive leading coefficient. It follows from Lemma \ref{addz} that $\I_n(f)- f \prec (t-1)g$. Also $\SSS_n(f)$ has nonnegative coefficients since its leading coefficient is nonnegative and $\SSS_n(f) \prec g$, where all the zeros of $g$ are nonpositive. 
\end{proof}

\begin{lemma}\label{daint}
Suppose $f,g$ is $\I_n$-interlacing. Then the sequence 
$$
\SSS_n(f), f, g,  t\SSS_n(g)+g 
$$
is $\I_n$-interlacing, that is, the sequence 
$$
\SSS_n(f), f, g,  t\SSS_n(g)+g, \I_n(g), \I_n(f), t\SSS_n(f)
$$ 
is interlacing.
\end{lemma}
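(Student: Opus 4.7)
The plan is to apply Lemma~\ref{wagner} to the displayed $7$-term sequence: I need each consecutive pair and the endpoint pair to interlace. A direct computation gives $\I_n(\SSS_n(f))=t\SSS_n(f)$, and from $(t-1)\SSS_n(g)=\I_n(g)-g$ we obtain $t\SSS_n(g)+g=\I_n(g)+\SSS_n(g)$, which is $\I_n$-palindromic. Hence applying $\I_n$ term-by-term reverses the sequence, so the last three consecutive interlacings will follow from the first three via the sign-flip rules recorded after Theorem~\ref{thm: Hermite-Biehler}.

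The endpoint interlacing $\SSS_n(f)\prec t\SSS_n(f)$ is immediate since Lemma~\ref{snf} applied to the $\I_n$-interlacing pair $(f,f)$ gives $\SSS_n(f)\in\RR_{\geq 0}[t]$, and multiplying by $t$ adjoins a new maximal root at $0$. The same application of Lemma~\ref{snf} provides $\SSS_n(f)\prec f$, and $f\prec g$ is part of the hypothesis.

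The main obstacle is the middle interlacing $g\prec t\SSS_n(g)+g=\I_n(g)+\SSS_n(g)$. I plan to use Hermite-Biehler to reduce this to the stability of
\[
F_1=\I_n(g)+\SSS_n(g)+ig,
\]
and then deform through the one-parameter family $F_\mu=\I_n(g)+\mu\SSS_n(g)+ig$, $\mu\in[0,1]$. The leading coefficient of $F_\mu$ is independent of $\mu$, so the degree is constant and the roots depend continuously on $\mu$; at $\mu=0$, $F_0=\I_n(g)+ig$ is stable by Hermite-Biehler applied to $g\prec\I_n(g)$. Any real root $t_0$ of $F_\mu$ forces $g(t_0)=0$, and substituting $\SSS_n(g)(t_0)=\I_n(g)(t_0)/(t_0-1)$ together with the nonpositivity of the roots of $g$ shows that for $\mu\in[0,1)$ the only real roots of $F_\mu$ are \emph{permanent} shared roots of $g$ and $\I_n(g)$. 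The technical crux is to check that a moving root trajectory cannot merge with such a permanent root $z_0$ and emerge into the open upper half-plane: a local expansion reveals that such a merger would force $F_{\mu_1}'(z_0)=0$, whose imaginary part is $g'(z_0)=0$, so $z_0$ would have to be a multiple root of $g$. Under a density reduction to the case of simple roots of $g$ (valid because $\I_n$-interlacing is a closed condition, allowing us to perturb $(f,g)$ to $(f_\varepsilon,g_\varepsilon)$ with simple roots still $\I_n$-interlacing), this never happens. Hurwitz's theorem then gives stability of $F_\mu$ for all $\mu\in[0,1]$, and taking limits delivers stability of $F_1$ and the desired interlacing.
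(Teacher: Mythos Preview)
Your overall strategy matches the paper's: verify the consecutive interlacings plus the endpoint interlacing $\SSS_n(f)\prec t\SSS_n(f)$, then invoke Lemma~\ref{wagner}; use the palindromicity of $t\SSS_n(g)+g$ and the fact that $\I_n$ reverses $\prec$ to reduce the last three consecutive relations to the first three. One small imprecision: the reversal $f\prec g \iff \I_n(g)\prec \I_n(f)$ is not one of the ``sign-flip rules'' listed after Theorem~\ref{thm: Hermite-Biehler}; it is the separate observation (used in the proof of Theorem~\ref{maineng}) which follows directly from Hermite--Biehler.

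The substantial divergence is in the middle step $g\prec t\SSS_n(g)+g$. Your deformation argument through $F_\mu=\I_n(g)+\mu\SSS_n(g)+ig$ is essentially correct, but it carries real overhead: you need the density reduction to simple roots of $g$ (and implicitly $g(0)\neq 0$ for the degree-constancy claim), and the exclusion of root-merging at permanent zeros. The paper dispatches this step in one line with tools you already have in hand. Apply Lemma~\ref{snf} to the $\I_n$-interlacing pair $(g,g)$ to get $\SSS_n(g)\prec g$; since all zeros of $g$ are nonpositive, Lemma~\ref{addz} gives $g\prec t\SSS_n(g)$; then $g\prec g$ and Proposition~\ref{cones} yield $g\prec t\SSS_n(g)+g$. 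This avoids Hurwitz, perturbation, and the case analysis entirely, and is worth internalizing: once $\SSS_n(g)\prec g$ is known, the rest is a cone argument.
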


\begin{proof}
By Lemma \ref{snf}, Lemma \ref{addz} and Proposition \ref{cones}, 
$$
\SSS_n(f) \prec f \prec g \prec t\SSS_n(g)+g = \I_n(t\SSS_n(g)+g) \prec \I_n(g) \prec \I_n(f) \prec \I_n(\SSS_n(f))= t \SSS_n(f), 
$$
from which the lemma follows using Lemma \ref{wagner}. 
\end{proof}

\begin{theorem}\label{maineng}
Suppose $\{f_k\}_{k=0}^m$ is $\I_n$-interlacing. Then the sequence $\{g_k\}_{k=0}^{m+1}$ defined by 
$$
g_k = t \SSS_n\left(\sum_{j=0}^m f_j\right) + \sum_{j \geq k}f_j
$$
is $\I_{n+1}$-interlacing.
\end{theorem}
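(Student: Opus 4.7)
The plan is to establish all adjacent interlacings in the chain
\[
g_0 \prec g_1 \prec \cdots \prec g_{m+1} \prec \I_{n+1}(g_m) \prec \cdots \prec \I_{n+1}(g_0),
\]
together with the endpoint condition $g_0 \prec \I_{n+1}(g_0)$, and then invoke Lemma \ref{wagner} to upgrade these to the pairwise interlacings required by $\I_{n+1}$-interlacing. Write $F = \sum_{j=0}^m f_j$ and $F_k = \sum_{j \geq k} f_j$. A direct computation applying $\I_n$ to the defining identity $(t-1)\SSS_n(F) = \I_n(F) - F$ yields $\I_{n-1}(\SSS_n(F)) = \SSS_n(F)$, from which one reads off $\I_{n+1}(g_{m+1}) = g_{m+1}$ (so $g_{m+1}$ is palindromic when viewed as a polynomial of degree $\leq n+1$) and $\I_n(g_0) = g_0$, hence $\I_{n+1}(g_0) = tg_0$. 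Real-rootedness of $g_0$ comes from Lemma \ref{daint} applied to the trivially $\I_n$-interlacing pair $(F,F)$, whose output contains $g_0 = t\SSS_n(F) + F$; together with the nonnegativity of coefficients this yields the endpoint condition $g_0 \prec tg_0 = \I_{n+1}(g_0)$ at once.

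The main step is to verify $g_k \prec g_{k+1}$ for each $k=0,\ldots,m$. Since $g_k = g_{k+1} + f_k$, convexity of the cone $\{q : f_k \prec q\}$ in Proposition \ref{cones} reduces this to showing $f_k \prec g_{k+1}$. The key is the decomposition
\[
g_{k+1} \;=\; \sum_{j<k} t\SSS_n(f_j) \;+\; t\SSS_n(f_k) \;+\; \bigl(t\SSS_n(F_{k+1}) + F_{k+1}\bigr),
\]
coming from linearity of $\SSS_n$ together with the identity $\SSS_{n+1}(F_{k+1}) = t\SSS_n(F_{k+1}) + F_{k+1}$. I would prove $f_k \prec$ each of the three summands via a separate application of Lemma \ref{daint}. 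For each $j<k$ the pair $(f_j, f_k)$ is $\I_n$-interlacing, and the interlacing chain produced by Lemma \ref{daint} places $f_k$ in the third position and terminates with $t\SSS_n(f_j)$, giving $f_k \prec t\SSS_n(f_j)$. Applying the lemma to $(f_k, f_k)$ yields $f_k \prec t\SSS_n(f_k)$. Finally, after checking via standard cone manipulations (using $f_k \prec f_j$ and $f_k \prec \I_n(f_j)$ for all $j>k$, which follow from $\{f_l\}$ being $\I_n$-interlacing) that $(f_k, F_{k+1})$ is itself an $\I_n$-interlacing pair, Lemma \ref{daint} applied to it furnishes $f_k \prec t\SSS_n(F_{k+1}) + F_{k+1}$. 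Summing these three $\prec$-relations by Proposition \ref{cones} delivers $f_k \prec g_{k+1}$, and hence $g_k \prec g_{k+1}$. Applying $\I_{n+1}$, which reverses interlacing for polynomials with nonnegative coefficients, produces the reflected chain $\I_{n+1}(g_{m+1}) \prec \cdots \prec \I_{n+1}(g_0)$, which glues to the original chain at $g_{m+1} = \I_{n+1}(g_{m+1})$; Lemma \ref{wagner} combined with the endpoint condition then completes the proof.

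The hard part will be identifying the correct decomposition of $g_{k+1}$. A naive attempt to apply Lemma \ref{daint} directly to $(f_k, F)$ does not work: for $k>0$ one has $f_j \prec f_k$ (not $f_k \prec f_j$) for $j<k$, so in general $f_k \not\prec F$, and the pair $(f_k, F)$ fails to be $\I_n$-interlacing. The decomposition above sidesteps this by packaging the ``future'' tail $F_{k+1}$ together with $t\SSS_n(F_{k+1})$ into the single palindromic summand $\SSS_{n+1}(F_{k+1})$ --- which is exactly the fourth entry of the Lemma \ref{daint} quadruple for the well-behaved pair $(f_k, F_{k+1})$ --- and splitting off the ``past'' contributions $t\SSS_n(f_j)$ with $j \leq k$, which are handled individually via the pairs $(f_j, f_k)$ where the favourable direction $f_j \prec f_k$ makes them accessible to Lemma \ref{daint}.
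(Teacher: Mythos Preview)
Your proof is correct and follows essentially the same strategy as the paper's: both reduce $g_k \prec g_{k+1}$ (the paper treats the general pair $g_k \prec g_\ell$ directly, aggregating $\sum_{j<k}f_j$ into a single block rather than handling each $f_j$ individually) via Proposition~\ref{cones} to interlacings of the form $f_k \prec t\SSS_n(\text{past})$, $f_k \prec t\SSS_n(f_k)$, and $f_k \prec t\SSS_n(\text{future})+\text{future}$, establish each piece through Lemma~\ref{daint}, and finish the doubled chain using $\I_{n+1}(g_{m+1})=g_{m+1}$, Lemma~\ref{wagner}, and the endpoint $g_0 \prec \I_{n+1}(g_0)=tg_0$. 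One small slip: the reduction from $g_k = g_{k+1}+f_k \prec g_{k+1}$ to $f_k \prec g_{k+1}$ uses the cone $\{q : q \prec g_{k+1}\}$ (which contains both $f_k$ and $g_{k+1}$, hence their sum), not the cone $\{q : f_k \prec q\}$ you cite---the latter is what you use in the subsequent step to assemble $g_{k+1}$ from its summands.
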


\begin{proof}
We first prove that $g_0, \ldots, g_{m+1}$ is interlacing. Let $0\leq k < \ell \leq m+1$, and let 
$$
h_0= \sum_{j<k}f_j, \ \ h_1= \sum_{j=k}^{\ell-1} f_j \ \ \mbox{ and } \ \ h_2= \sum_{j\geq \ell} f_j.
$$
Then 
$$
g_k= t \SSS_n(h_0+h_1+h_2) +h_1+h_2 \ \ \mbox{ and } \ \ g_\ell= t \SSS_n(h_0+h_1+h_2) +h_2. 
$$
By Lemma \ref{merge} the sequence $h_0, h_1, h_2$ is $\I_n$-interlacing. 
We should prove 
$$
 t \SSS_n(h_0+h_1+h_2) +h_1+h_2  \prec t \SSS_n(h_0+h_1+h_2) +h_2. 
$$
By Proposition \ref{cones},
this reduces to proving 
$$
h_1 \prec  t \SSS_n(h_0+h_1+h_2) +h_2, 
$$
 which, by Proposition \ref{cones} again, reduces to proving 
$$
h_1 \prec  t \SSS_n(h_0),  h_1 \prec  t \SSS_n(h_1) \mbox{ and } h_1 \prec  t \SSS_n(h_2) +h_2, 
$$
which follows from Lemmas \ref{addz} and  \ref{daint}. Hence  $g_0, \ldots, g_{m+1}$ is interlacing. 

Notice that for  polynomials of degree at most $d$, $f \prec g$ if and only if $\I_d(g) \prec \I_d(f)$. Since $\I_{n+1}(g_{m+1})= g_{m+1}$, 
$$
g_0\prec g_1\prec  \cdots \prec g_{m+1} \prec \I_{n+1}(g_{m+1}) \prec \cdots \prec \I_{n+1}(g_0). 
$$
By Lemma \ref{wagner}, it remains to prove $g_0 \prec \I_{n+1}(g_0)$. However $g_0= t\SSS_n(f) + f$, where 
$f= \sum_{j=0}^m f_j$. Hence $\I_{n+1}(g_0) = t \I_{n}(g_0)= tg_0$, which concludes the proof. 
\end{proof}

\subsection{Chow-deranged maps and Chow-Eulerian transformations}\label{maps}
Let  $R=(r_{n,k} )_{n,k=0}^N$, where $N \in \NN\cup\{\infty\}$, be a lower triangular matrix, with entries in $\R$, and  with all diagonal entries equal to one. Let further $\R_N[t]$ be the $\R$-module of all polynomials in $\R[t]$ of degree at most $N$. We define two $\R$-linear maps $\DD =\DD_R : \R_N[t] \to \R_N[t]$ and 
$\A =\A_R: \R_N[t] \to \R_N[t]$
$$
\DD(t^n)= d_n  \quad \mbox{ and } \quad \A(t^n) = A_n, \quad n \leq N, 
$$
where $d_n$ and $A_n$ are the Chow-derangement polynomials and Chow-Eulerian polynomials associated to $R$, respectively. These maps are called the \emph{Chow-deranged map} and the \emph{Chow-Eulerian map}, respectively. Notice that (iv) in Corollaries \ref{cor: H and d for matrices} and \ref{cor: G and A for matrices}   translate to
\[
H_n = \DD(R_n) \quad  \mbox{ and } \quad G_n= \A(R_n).
\]
\begin{remark}
    For $R_0=(\binom n k )_{n,k=0}^\infty$, the deranged map was first studied by the first author and Solus \cite{branden-solus}, who proved Theorem~\ref{mainman} below for the case when $R=R_0$. The Chow-Eulerian map for the case when $R=R_0$ was first considered by Brenti \cite{Brenti89}, who conjectured that $\A_{R_0}(f)$ is real-rooted whenever all zeros of $f \in \RR[t]$ are real and nonpositive. This conjectured was disproved in \cite{branden-jochemko}, where it was conjectured that $\A_{R_0}(f)$ is real-rooted whenever $f$ has a nonnegative expansion in $\{t^k(1+t)^{n-k}\}_{k=0}^n$. This conjecture was proved by Athanasiadis in \cite{athanasiadis-eulerian2}. 
\end{remark}

Theorem~\ref{mainmanA} below generalizes Athanasiadis result to any resolvable matrix.
Suppose $R$ is resolvable, and define 
$$
d_{n,k} = \DD(R_{n,k}) \quad \mbox{ and } A_{n,k} = \A(R_{n,k}), \quad 0\leq k \leq n \leq N. 
$$
\begin{lemma}\label{dnkrec}
Let $R$ be a resolvable matrix. For $0\leq k \leq n+1 \leq N$, 
\begin{align*}
    d_{n+1,k} &= t\SSS_n\left(\sum_{j \geq 0}\lambda_{n,j}d_{n,j}\right) + \sum_{j \geq k}\lambda_{n,j}d_{n,j}, \quad \mbox{ and }\\
    A_{n+1,k} &= t\SSS_{n+1}\left(\sum_{j \geq 0}\lambda_{n,j}A_{n,j}\right) + \sum_{j \geq k}\lambda_{n,j}A_{n,j}. 
\end{align*}
\end{lemma}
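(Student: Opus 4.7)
The plan is to combine the linearity of the Chow-deranged map $\DD$ and the Chow-Eulerian map $\A$ with the Pascal-like recursion \eqref{r-pasc} that defines resolvability, and then invoke the recursion \eqref{dn-recu} from Corollary \ref{cor: H and d for matrices} (and its analogue in Corollary \ref{cor: G and A for matrices}).

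First I would apply $\DD$ directly to the identity \eqref{r-sum}, namely
$$
R_{n+1,k}=t^{n+1}+\sum_{j\geq k}\lambda_{n,j}R_{n,j},
$$
to obtain by linearity
$$
d_{n+1,k}=d_{n+1}+\sum_{j\geq k}\lambda_{n,j}d_{n,j}.
$$
It then remains to identify $d_{n+1}$ with $t\SSS_n\bigl(\sum_{j\geq 0}\lambda_{n,j}d_{n,j}\bigr)$. Setting $k=0$ in \eqref{r-sum} and using $R_{n+1,0}=R_{n+1}=\sum_{k=0}^{n+1}r_{n+1,k}t^k$ with $r_{n+1,n+1}=1$ gives
$$
\sum_{k=0}^{n} r_{n+1,k}\,t^{k}\;=\;R_{n+1}-t^{n+1}\;=\;\sum_{j\geq 0}\lambda_{n,j}R_{n,j}.
$$
Applying $\DD$ to both sides and then substituting into \eqref{dn-recu} yields
$$
d_{n+1}\;=\;t\SSS_n\!\Big(\sum_{k=0}^{n} r_{n+1,k}\,d_k\Big)\;=\;t\SSS_n\!\Big(\sum_{j\geq 0}\lambda_{n,j}\,d_{n,j}\Big),
$$
which combined with the earlier formula for $d_{n+1,k}$ gives the first identity.

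The argument for $A_{n+1,k}$ is structurally identical: applying $\A$ to \eqref{r-sum} gives
$$A_{n+1,k}=A_{n+1}+\sum_{j\geq k}\lambda_{n,j}A_{n,j},$$
and the expression $A_{n+1}=t\SSS_{n+1}\bigl(\sum_{j\geq 0}\lambda_{n,j}A_{n,j}\bigr)$ follows from the recursion displayed in Corollary \ref{cor: G and A for matrices} together with the same resolvability computation. The only bookkeeping change is that the relevant operator is $\SSS_{n+1}$ rather than $\SSS_n$, since $A_{n+1}$ has degree at most $n+1$ whereas $d_{n+1}$ has degree at most $n$. There is no serious obstacle here; the proof is a short linearity computation that uses resolvability to re-express the input of the $\SSS$ operator in terms of the resolving polynomials $R_{n,j}$.
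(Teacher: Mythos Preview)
Your proof is correct and follows essentially the same approach as the paper: apply $\DD$ to \eqref{r-sum} to split $d_{n+1,k}=d_{n+1}+\sum_{j\geq k}\lambda_{n,j}d_{n,j}$, then rewrite $d_{n+1}$ via \eqref{dn-recu} and the $k=0$ case of \eqref{r-sum}; the paper compresses this into a single chain of equalities but the logic is identical.
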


\begin{proof}
  By \eqref{r-sum} and Corollary \ref{cor: H and d for matrices}, 
\begin{align*}
d_{n+1,k} &= \DD(t^{n+1} + R_{n+1,k} -t^{n+1}) \\
                &= t\SSS_n\DD(R_{n+1,0} -t^{n+1})+ \DD(R_{n+1,k} -t^{n+1}) \\
                &= t\SSS_n\left(\sum_{j \geq 0}\lambda_{n,j}d_{n,j}\right) + \sum_{j \geq k}\lambda_{n,j}d_{n,j}. 
\end{align*}
The proof of the second recursion is almost identical. 
\end{proof}

\begin{theorem}\label{mainman} Let $R$ be a resolvable matrix, and $0\leq n \leq N$. Then 
$$
d_{n,0}, d_{n,1}, \ldots, d_{n,n}
$$
is an $\I_n$-interlacing sequence. 

Moreover if $f= \sum_{k=0}^n h_k R_{n,k}$, where $h_k \geq 0$ for each $k$, then $\DD(f)$ is real-rooted and $d_{n,0} \prec \DD(f) \prec d_{n,n}$. 
\end{theorem}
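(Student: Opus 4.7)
The plan is to prove both assertions by induction on $n$, using Theorem~\ref{maineng} as the inductive engine and Lemma~\ref{dnkrec} to rewrite the recursion in precisely the form that Theorem~\ref{maineng} produces. The base case $n=0$ is trivial: $R_{0,0}=1$, so $d_{0,0}=\DD(1)=1$, and the singleton sequence is vacuously $\I_0$-interlacing.

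For the inductive step, suppose $\{d_{n,k}\}_{k=0}^n$ is $\I_n$-interlacing. Since $\lambda_{n,k}\geq 0$ by the definition of resolvability, Lemma~\ref{merge} yields that $\{\lambda_{n,k}\,d_{n,k}\}_{k=0}^n$ is also $\I_n$-interlacing. I would then apply Theorem~\ref{maineng} to this sequence (with $m=n$), producing an $\I_{n+1}$-interlacing sequence $\{g_k\}_{k=0}^{n+1}$ given by
\[
g_k \;=\; t\SSS_n\!\left(\sum_{j=0}^{n}\lambda_{n,j}\,d_{n,j}\right) \;+\; \sum_{j\geq k}\lambda_{n,j}\,d_{n,j}.
\]
Comparing this with the first recursion in Lemma~\ref{dnkrec} identifies $g_k$ with $d_{n+1,k}$ for every $0\leq k\leq n+1$, completing the induction.

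For the ``moreover'' part, linearity of $\DD$ gives $\DD(f)=\sum_{k=0}^n h_k\,d_{n,k}$. The $\I_n$-interlacing of $\{d_{n,k}\}_{k=0}^n$ implies in particular that $d_{n,0}\prec d_{n,k}\prec d_{n,n}$ for every $0\leq k\leq n$. Proposition~\ref{cones} then says that both $\{q\in\RR[t]:d_{n,0}\prec q\}$ and $\{q\in\RR[t]:q\prec d_{n,n}\}$ are convex cones, so the nonnegative combination $\DD(f)$ lies in their intersection. This yields $d_{n,0}\prec \DD(f)\prec d_{n,n}$, and in particular $\DD(f)$ is real-rooted.

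The main point that requires care is the bookkeeping in the inductive step: one needs to check that the range $0\leq k\leq n+1$ on the output side of Theorem~\ref{maineng} really captures $d_{n+1,k}$ at the boundary index $k=n+1$, where the tail sum $\sum_{j\geq k}\lambda_{n,j}d_{n,j}$ is empty and $d_{n+1,n+1}=\DD(t^{n+1})=d_{n+1}$ must coincide with $t\SSS_n$ of the full sum. But this is precisely the content of the defining relation for $d_{n+1}$ in Corollary~\ref{cor: H and d for matrices} combined with the resolvability identity $R_{n+1,n+1}=t^{n+1}$, so no separate argument is needed. All the analytic work is already concentrated in Theorem~\ref{maineng}, and the present proof is essentially a matter of aligning its hypotheses with the recursion satisfied by the Chow-deranged images of the resolving polynomials.
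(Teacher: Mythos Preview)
Your proof is correct and follows essentially the same approach as the paper: induction on $n$ with Theorem~\ref{maineng} applied to the $\I_n$-interlacing sequence $\{\lambda_{n,j}d_{n,j}\}_{j=0}^n$, identified via Lemma~\ref{dnkrec} with $\{d_{n+1,k}\}_{k=0}^{n+1}$, and the ``moreover'' clause via Proposition~\ref{cones}. Your write-up is in fact more detailed than the paper's, which compresses the argument to a couple of sentences; your explicit invocation of Lemma~\ref{merge} and the check at the boundary index $k=n+1$ are fine additions.
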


\begin{proof}
The proof of the first statement is by induction over $n$, the case when $n=0$ being clear. 
By induction the sequence $\{\lambda_{n,j}d_{n,j}\}_{j=0}^n$ is $\I_n$-interlacing. The proof of the first statement now follows by 
Lemma \ref{dnkrec}, Theorem \ref{maineng} and induction. 

The proof of the second statement from the first in conjunction with Proposition~\ref{cones}. 
\end{proof}
The proof of the next theorem is identical to that of the previous. 

\begin{theorem}\label{mainmanA} Let $R$ be a resolvable matrix, and $0\leq n \leq N$. Then 
$$
A_{n,0}, A_{n,1}, \ldots, A_{n,n}
$$
is an $\I_{n+1}$-interlacing sequence. 

Moreover if $f= \sum_{k=0}^n h_k R_{n,k}$, where $h_k \geq 0$ for each $k$, then $\A(f)$ is real-rooted and $A_{n,0} \prec \A(f) \prec A_{n,n}$. 
\end{theorem}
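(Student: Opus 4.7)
The plan is to mirror the inductive argument that was sketched for Theorem~\ref{mainman}, swapping in the second recursion from Lemma~\ref{dnkrec}. Proceed by induction on $n$. The base case $n=0$ reduces to checking that the length-one sequence $A_{0,0}=1$ is $\I_1$-interlacing, i.e.\ that $1 \prec t$, which follows from the Hermite-Biehler criterion (Theorem~\ref{thm: Hermite-Biehler}) applied to $t+i$. For the inductive step, assume $\{A_{n,k}\}_{k=0}^n$ is $\I_{n+1}$-interlacing. Since the scalars $\lambda_{n,k}$ are nonnegative, Lemma~\ref{merge} yields that $\{\lambda_{n,k}A_{n,k}\}_{k=0}^{n}$ is still $\I_{n+1}$-interlacing.

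Now invoke Lemma~\ref{dnkrec}, which expresses
\[
A_{n+1,k} = t\SSS_{n+1}\!\left(\sum_{j=0}^{n}\lambda_{n,j}A_{n,j}\right) + \sum_{j \geq k}\lambda_{n,j}A_{n,j}, \qquad 0 \leq k \leq n+1.
\]
This is exactly the pattern treated by Theorem~\ref{maineng}, applied to the $\I_{n+1}$-interlacing input sequence $\{\lambda_{n,j}A_{n,j}\}_{j=0}^{n}$ (with the $n$ of Theorem~\ref{maineng} taken to be $n+1$). The conclusion of Theorem~\ref{maineng} is that $\{A_{n+1,k}\}_{k=0}^{n+1}$ is $\I_{n+2}$-interlacing, which closes the induction. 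This is the complete analogue of the inductive step in the proof of Theorem~\ref{mainman}; the only change is that $\SSS_n$ is replaced by $\SSS_{n+1}$, which is precisely what shifts the target from $\I_{n+1}$-interlacing to $\I_{n+2}$-interlacing.

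For the second statement, use $\R$-linearity of the Chow-Eulerian map to expand $\A(f)=\sum_{k=0}^{n}h_k A_{n,k}$. The first statement gives in particular $A_{n,0} \prec A_{n,k}$ and $A_{n,k} \prec A_{n,n}$ for every $0 \leq k \leq n$. By Proposition~\ref{cones}, both $\{q : A_{n,0} \prec q\}$ and $\{q : q \prec A_{n,n}\}$ are convex cones, and hence contain the nonnegative combination $\A(f)$. This gives $A_{n,0} \prec \A(f) \prec A_{n,n}$, and real-rootedness of $\A(f)$ is a direct consequence. I do not anticipate any genuine obstacle: the content is packaged entirely in Theorem~\ref{maineng} and Lemma~\ref{dnkrec}, and the only point requiring care is tracking that each application of $\SSS_{n+1}$ (rather than $\SSS_n$ as in the derangement case) raises the interlacing index by exactly one, so that the output sequence sits in $\II_{n+2}$ as claimed.
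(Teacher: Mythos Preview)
Your proof is correct and follows the same approach as the paper, which simply notes that the argument is identical to that of Theorem~\ref{mainman} with $\SSS_{n+1}$ in place of $\SSS_n$. You have spelled out the induction and the application of Theorem~\ref{maineng} and Proposition~\ref{cones} in exactly the way the paper intends.
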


The first important consequence of Theorem \ref{mainman} establishes the real-rootedness of the Chow polynomials and Chow-derangement polynomials of any lower triangular $\TN$-matrix with all diagonal entries equal to one. 
\begin{theorem}\label{mainman2}
Let $R = (r_{n,k})_{n,k=0}^N$ be a lower triangular $\TN$-matrix with all diagonal entries equal to one. Then the Chow polynomials $\{H_n\}_{n=0}^N$ and the Chow-derangement polynomials $\{d_n\}_{n=0}^N$ are real rooted. 

Moreover $H_n \prec d_n$, $H_n \prec H_{n+1}$ and  $d_n \prec d_{n+1}$ for each $n$. 
\end{theorem}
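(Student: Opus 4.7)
The plan is to deduce all three claims from Theorem~\ref{mainman} together with the interlacing machinery of Lemmas~\ref{snf}, \ref{addz}, and \ref{daint}. By Theorem~\ref{eqcon}, $R$ being $\TN$ is equivalent to being resolvable, so I fix a resolving array $(R_{n,k})$ with coefficients $(\lambda_{n,k})$. Since $H_n = \DD(R_n) = d_{n,0}$ and $d_n = \DD(t^n) = d_{n,n}$, Theorem~\ref{mainman} immediately yields the real-rootedness of every $H_n$ and $d_n$, as well as the relation $H_n \prec d_n$, obtained by reading off the endpoints of the $\I_n$-interlacing sequence $d_{n,0}, \ldots, d_{n,n}$.

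For the cross-row interlacings I introduce the auxiliary polynomial
\[
F = \sum_{j=0}^n \lambda_{n,j}\, d_{n,j},
\]
so that Lemma~\ref{dnkrec} gives
\[
H_{n+1} = d_{n+1,0} = t\SSS_n(F) + F \qquad \text{and} \qquad d_{n+1} = d_{n+1,n+1} = t\SSS_n(F).
\]
The key intermediate step is to verify that both $(d_{n,0}, F)$ and $(F, d_{n,n})$ are $\I_n$-interlacing pairs. Starting from the interlacing sequence $d_{n,0}, \ldots, d_{n,n}, \I_n(d_{n,n}), \ldots, \I_n(d_{n,0})$ supplied by Theorem~\ref{mainman}, Proposition~\ref{cones} (convex-cone property of $\prec$) gives $d_{n,0} \prec F$, $F \prec d_{n,n}$, and $F \prec \I_n(F)$ by nonnegatively summing the appropriate interlacings of the $d_{n,j}$ and $\I_n(d_{n,k})$. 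Combined with $d_{n,0} \prec \I_n(d_{n,0})$, which is present in the original interlacing sequence, Lemma~\ref{wagner} promotes these to full interlacing of the four-term sequences required by Definition~\ref{def: I_n-int}.

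To conclude $H_n \prec H_{n+1}$ I apply Lemma~\ref{daint} with $f = d_{n,0}$ and $g = F$; its conclusion includes $d_{n,0} \prec t\SSS_n(F) + F = H_{n+1}$. To conclude $d_n \prec d_{n+1}$ I apply Lemma~\ref{snf} to the $\I_n$-interlacing pair $(F, d_{n,n})$, obtaining $\SSS_n(F) \prec d_{n,n}$. Since every $d_{n,k}$ belongs to $\RR_{\geq 0}[t]$ (by Definition~\ref{def: I_n-int}), the largest zero of $d_{n,n}$ is at most $0$, so Lemma~\ref{addz} applied with $\alpha = 0$ inverts $\SSS_n(F) \prec d_{n,n}$ into $d_{n,n} \prec t\SSS_n(F) = d_{n+1}$.

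The main conceptual obstacle is spotting that a single auxiliary polynomial $F$ simultaneously controls both $H_{n+1}$ and $d_{n+1}$, and that the two cross-row interlacings are then obtained by feeding the complementary pairs $(d_{n,0}, F)$ and $(F, d_{n,n})$ into Lemma~\ref{daint} and into the combination of Lemmas~\ref{snf} and \ref{addz}, respectively; everything else is convex-cone bookkeeping inside the sequence guaranteed by Theorem~\ref{mainman}. Degenerate cases such as $F \equiv 0$ force $H_{n+1} = d_{n+1} = 0$ and are handled trivially by the conventions $p \prec 0$ and $0 \prec p$.
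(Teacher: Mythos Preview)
Your proof is correct and follows essentially the same route as the paper: both hinge on the auxiliary polynomial $F=\sum_j \lambda_{n,j}d_{n,j}$, use Theorem~\ref{mainman} to place $d_{n,0}$, $F$, $d_{n,n}$ into $\I_n$-interlacing position via the convex-cone/merge lemmas, and then extract the cross-row interlacings from Lemma~\ref{daint}. The only tactical differences are that you apply Lemma~\ref{daint} to the pair $(d_{n,0},F)$ to get $H_n\prec H_{n+1}$ directly (the paper instead applies it to $(F,F)$ and finishes with a chain and Lemma~\ref{wagner}), while for $d_n\prec d_{n+1}$ you unroll Lemma~\ref{daint} into Lemma~\ref{snf} plus Lemma~\ref{addz} where the paper quotes Lemma~\ref{daint} outright.
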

\begin{proof}
Since $d_{n,0}=H_n$ and $d_{n,n}=d_n$, the proof of all statement except $d_n \prec d_{n+1}$ and $H_n\prec H_{n+1}$ follow from Theorem~\ref{mainman}. Let $f= \sum_{j \geq 0} \lambda_{n,j} d_{n,j}$. Then, by Theorem~\ref{mainman}, Lemma \ref{cones} and Lemma \ref{daint}
$$
 H_n=d_{n,0} \prec f \prec t\SSS_n(f)+f = H_{n+1} = \I_n(H_{n+1}) \prec \I_n(f) \prec I_n(d_{n,0})= tH_n. 
$$ 
Since $H_n \prec t H_n$, Lemma \ref{wagner} implies $H_n \prec H_{n+1}$. 

Let  $g = d_n=d_{n,n}$. Then $f,g$ is an $\I_n$-interlacing sequence by Theorem \ref{mainman} and Lemma \ref{merge}. By Lemma \ref{daint}, 
$$
d_n=g \prec t\SSS_n(f)= d_{n+1}.
$$

\end{proof}
Again, the proof of the augmented version of Theorem \ref{mainman2} is identical. 
\begin{theorem}\label{mainman2A}
Let $R = (r_{n,k})_{n,k=0}^N$ be a lower triangular $\TN$-matrix with all diagonal entries equal to one. Then the augmented Chow polynomials $\{G_n\}_{n=0}^N$ and the Chow-Eulerian polynomials $\{A_n\}_{n=0}^N$ are real rooted. 

Moreover $G_n \prec A_n$, $G_n \prec G_{n+1}$ and  $A_n \prec A_{n+1}$ for each $n$. 
\end{theorem}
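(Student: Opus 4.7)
The plan is to mirror the proof of Theorem~\ref{mainman2} line-by-line, replacing Theorem~\ref{mainman} with its augmented counterpart Theorem~\ref{mainmanA} and adjusting every symmetry level from $n$ to $n+1$. Since $R_{n,0}=R_n$ and $R_{n,n}=t^n$, Corollary~\ref{cor: G and A for matrices} gives $A_{n,0}=G_n$ and $A_{n,n}=A_n$; Theorem~\ref{mainmanA} then immediately delivers real-rootedness of every $G_n$ and $A_n$, together with $G_n\prec A_n$ as the extremes of an $\I_{n+1}$-interlacing sequence.

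For $G_n \prec G_{n+1}$, set $f = \sum_{j \geq 0}\lambda_{n,j}A_{n,j}$. By Lemma~\ref{dnkrec}, $G_{n+1}=A_{n+1,0}=t\SSS_{n+1}(f)+f$, and by Proposition~\ref{cones} combined with Theorem~\ref{mainmanA}, the pair $(f,f)$ is $\I_{n+1}$-interlacing (because $f$ is a nonnegative combination of polynomials that all $\I_{n+1}$-interlace each other). Lemma~\ref{daint} applied at level $n+1$ then produces the chain
\[
G_n = A_{n,0} \prec f \prec t\SSS_{n+1}(f)+f = G_{n+1} = \I_{n+1}(G_{n+1}) \prec \I_{n+1}(f) \prec \I_{n+1}(A_{n,0}) = tG_n,
\]
where the final equality uses $\I_{n+1}(G_n)=tG_n$, which holds since $\I_n(G_n)=G_n$ and $\deg G_n \leq n$. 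As $G_n \prec tG_n$ is trivial, Lemma~\ref{wagner} yields $G_n \prec G_{n+1}$.

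For $A_n \prec A_{n+1}$, put $g=A_n=A_{n,n}$. Theorem~\ref{mainmanA} together with Lemma~\ref{merge} shows that $(f,g)$ is $\I_{n+1}$-interlacing, so Lemma~\ref{daint} at level $n+1$ yields $g \prec t\SSS_{n+1}(f)$. Taking $k=n+1$ (the empty sum) in the second recursion of Lemma~\ref{dnkrec} gives $t\SSS_{n+1}(f)=A_{n+1,n+1}=A_{n+1}$, so $A_n \prec A_{n+1}$.

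The only obstacle is bookkeeping of symmetry levels: the Chow-Eulerian polynomials are palindromic ``one level up'' relative to their $\SSS$-operator, reflected in the identity $\I_n(A_n)=A_n/t$, while $\SSS_{n+1}$ rather than $\SSS_n$ appears in the recursion for $A_{n+1,k}$. Ensuring that each application of $\I$ and $\SSS$ uses the correct index $n+1$ throughout, and verifying the elementary identity $\I_{n+1}(G_n)=tG_n$, are the sole departures from the proof of Theorem~\ref{mainman2}; every structural ingredient (Lemmas~\ref{daint}, \ref{wagner}, \ref{merge}, \ref{dnkrec}, and Proposition~\ref{cones}) applies verbatim.
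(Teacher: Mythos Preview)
Your proof is correct and follows precisely the approach the paper intends: the paper states only that ``the proof of the augmented version of Theorem~\ref{mainman2} is identical,'' and you have written out that identical proof with the necessary index shifts from $n$ to $n+1$ and the replacements $d_{n,k}\to A_{n,k}$, $H_n\to G_n$, $d_n\to A_n$. The verifications $A_{n,0}=G_n$, $A_{n,n}=A_n$, $\I_{n+1}(G_n)=tG_n$, and the use of the second recursion in Lemma~\ref{dnkrec} are all accurate, and your applications of Lemmas~\ref{daint}, \ref{wagner}, \ref{merge}, and Proposition~\ref{cones} at level $n+1$ are valid.
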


\section{\texorpdfstring{$\gamma$-Chow polynomials of matrices}{gamma-Chow polynomials of matrices}}\label{section gamma}
The linear space of all palindromic polynomials with center of symmetry $n/2$ has a basis $\{t^k(1+t)^{n-2k}\}_{k=0}^{\lfloor n/2\rfloor}$. Hence, if $\I_n(f)=f$, we  may define its \emph{$\gamma$-polynomial}, $\gamma(f)$, to be the unique polynomial of degree at most $\lfloor n/2\rfloor$ for which 
\[
f = (1+t)^n\gamma(f)\left(\frac{t}{(1+t)^2} \right),
\]
see \cite{athanasiadis-gamma-positivity}. 
These polynomials satisfy the following properties that we will use freely. 
\begin{itemize}
    \item $\gamma(fg) = \gamma(f)\gamma(g)$,
    \item in particular, $\gamma((1+t)f) = \gamma(f)$ and $\gamma(tf) = t\gamma(f)$,
    \item if $f$ and $g$ have the same center of symmetry, $\gamma(f+g) = \gamma(f) + \gamma(g)$.
\end{itemize}

\subsection{\texorpdfstring{Coefficients of $\gamma$-Chow polynomials of matrices}{Coefficients of gamma-Chow polynomials of matrices}}
Given a lower triangular matrix $R=(r_{n,k})_{n,k=0}^N$ with all diagonal entries equal to one, we know that the Chow polynomials  $H_n=H_n[R]$  are palindromic with center of symmetry  $(n-1)/2$.
We write $\gamma_n[R] = \gamma(H_n[R])$ for the $\gamma$-polynomial of $H_n[R]$, and we call it a \emph{$\gamma$-Chow polynomials} of the matrix $R$. The main goal of this section is to give an interpretation to the coefficients of these polynomials.
For a set ${S =\{s_1< \cdots < s_k\} \subseteq [n-1]}$ define 
\[
\beta_{[0,n]}(S) =  \sum_{T \subseteq S} (-1)^{|S\setminus T|} \alpha_{[0,n]}(T).
\]
A set $S$ of integers is called \emph{stable} if it does not contain any two consecutive integers.

\begin{theorem}\label{gamma-beta}
    Let $R=(r_{n,k})_{n,k=0}^N$ be a lower triangular matrix with all diagonal entries equal to one. Then
    \[
    \gamma_n[R] = \sum_{\substack{S\subseteq [n-1] \\ S\cup\{0\} \text{ stable}}} \beta_{[0,n]}(S)t^{|S|}, \quad 0\leq n \leq N.
    \]
\end{theorem}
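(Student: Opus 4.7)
\emph{Proof plan.} The strategy is to apply Möbius inversion on the Boolean lattice to the non-recursive expansion of $H_n$ from Proposition~\ref{def:HR canonical decomposition} and read off the $\gamma$-polynomial directly. Using the second formula in that proposition, one has $H_n=\sum_{S\subseteq[n-1]}\alpha_{[0,n]}(S)\,C_S(t)$ where
\[
C_S(t):=t^{|S|}\cdot\frac{t^{n-s_m}-1}{t-1}\cdot\prod_{i=1}^{|S|}\frac{t^{s_i-s_{i-1}-1}-1}{t-1},
\]
with $S=\{s_1<\cdots<s_m\}$ and $s_0=0$. A factor $\frac{t^{s_i-s_{i-1}-1}-1}{t-1}$ vanishes precisely when $s_i=s_{i-1}+1$, so $C_S\not\equiv 0$ if and only if $S\cup\{0\}$ is stable.

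Since $\beta_{[0,n]}$ is defined by Möbius inversion, $\alpha_{[0,n]}(S)=\sum_{U\subseteq S}\beta_{[0,n]}(U)$. Substituting and switching the order of summation (and noting that if $U\cup\{0\}$ is not stable then every $S\supseteq U$ fails stability as well) gives
\[
H_n=\sum_{\substack{U\subseteq[n-1]\\U\cup\{0\}\text{ stable}}}\beta_{[0,n]}(U)\sum_{\substack{S\supseteq U\\S\cup\{0\}\text{ stable}}}C_S(t).
\]
Since $\gamma\bigl(t^k(1+t)^{n-1-2k}\bigr)=t^k$, the theorem reduces to proving, for every $U$ with $U\cup\{0\}$ stable, the key identity
\begin{equation}\label{eq:plankey}
\sum_{\substack{S\supseteq U\\S\cup\{0\}\text{ stable}}}C_S(t)=t^{|U|}(1+t)^{n-1-2|U|}.
\end{equation}

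To prove \eqref{eq:plankey}, write $U\cup\{0\}=\{0=v_0<v_1<\cdots<v_k\}$, set $v_{k+1}:=n$ and $b_i:=v_i-v_{i-1}$. An admissible $S$ splits along the gaps $(v_{i-1},v_i)$: the elements of $S$ in the $i$-th gap determine a composition of $b_i$ with all parts $\geq 2$ for the $k$ inner gaps ($i\leq k$), and with all parts $\geq 2$ except the last, which is only required to be $\geq 1$, for the top gap ($i=k+1$). Both $C_S$ and the factor $t^{|S|}=t^{|U|}\prod_i t^{|S\cap(v_{i-1},v_i)|}$ factor along the gaps, so the left side of \eqref{eq:plankey} equals $t^{|U|}\prod_{i=1}^k F(b_i)\cdot G(b_{k+1})$, where $F(b)$ and $G(b)$ are the generating polynomials summing the corresponding composition weights. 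A short generating-function calculation, starting from $\sum_{a\geq 2}\frac{t^{a-1}-1}{t-1}u^a=\frac{u^2}{(1-u)(1-tu)}$ and collapsing the resulting geometric series, yields $F(b)=(1+t)^{b-2}$ for $b\geq 2$ and $G(b)=(1+t)^{b-1}$ for $b\geq 1$. Combined with $\sum_{i\leq k}(b_i-2)+(b_{k+1}-1)=n-1-2|U|$, this establishes \eqref{eq:plankey} and hence the theorem. The main technical step is the generating-function evaluation of the gap contributions; the rest is a bookkeeping exercise.
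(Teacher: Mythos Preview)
Your proof is correct. The argument is organized differently from the paper's, and the difference is worth noting.

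The paper applies $\gamma$ first: it takes the formula of Proposition~\ref{def:HR canonical decomposition}, pushes $\gamma$ through term by term to obtain a sum over $T$ (with $T\cup\{0\}$ stable) of $t^{|T|}\alpha_{[0,n]}(T)$ times a product of the polynomials $W_j=\gamma\bigl(\tfrac{t^{j+1}-1}{t-1}\bigr)$, and then invokes an external identity \cite[Lemma~4.23]{ferroni-matherne-vecchi} to rewrite that product as a signed sum over stable supersets $S\supseteq T$. Swapping the sums then directly produces $\beta_{[0,n]}(S)$.

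You reverse the order: you M\"obius-invert $\alpha$ to $\beta$ before touching $\gamma$, swap the sums, and are left with the inner sum $\sum_{S\supseteq U}C_S(t)$, which you evaluate to $t^{|U|}(1+t)^{n-1-2|U|}$ by a self-contained generating-function computation (factoring along the gaps of $U\cup\{0,n\}$ and collapsing the geometric series). Only then do you read off the $\gamma$-polynomial. Your identity~\eqref{eq:plankey} is in effect the ``pre-$\gamma$'' version of the cited lemma, proved from scratch. The payoff is that your argument is fully self-contained and does not require knowing the $\gamma$-polynomials $W_j$ or the FMV lemma; the paper's version is shorter precisely because that work has been outsourced. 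Both routes are equally natural, and the generating-function evaluation you give (yielding $F(b)=(1+t)^{b-2}$ and $G(b)=(1+t)^{b-1}$) is clean and checks out.
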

\begin{proof}
    Let $W_{j} = \gamma\left(({t^{j+1}-1})/{(t-1)}\right)$, with the convention that $W_{-1} = 0$. Then, by Definition \ref{def:HR canonical decomposition}, we may write
    \[
    \gamma_{n}[R] = \sum_{m\geq 0}\sum_{\substack{S\subseteq [n-1] \\ |S| = m }}t^m\alpha_{[0,n]}(S)\left(\prod_{i=1}^mW_{s_i - s_{i-1} - 2}\right)W_{n-s_m - 1},
    \]
    as all the polynomials in the sum have the same center of symmetry. Notice that since $W_{-1} = 0$, we can restrict the sum to stable sets. Then
    \begin{align*}
    \gamma_n[R] &= \sum_{\substack{T = \{s_1 <\cdots < s_m\}\subseteq [n-1] \\T\cup\{0\} \text{ stable}}}(-1)^{|T|}(-t)^{|T|}\alpha_{[0,n]}(T) \cdot \left(
W_{n - j_m - 1}\ \prod_{i=1}^m W_{j_i - j_{i-1} - 1} \right) \\
    &= \sum_{\substack{T = \{j_1 <\cdots < j_m\} \subseteq [n-1] \\T\cup\{0\} \text{ stable}}}(-1)^{|T|}(-t)^{|T|}\alpha_{[0,n]}(T) \sum_{\substack{S\cup\{0\}\text{ stable} \\ [n-1] \supseteq S \supseteq T}}(-t)^{|S\setminus T|} \\
    &= \sum_{S\cup\{0\} \text{ stable}}t^{|S|} \sum_{T\subseteq S}(-1)^{|S\setminus T|}\alpha_{[0,n]}(T) \\
    &= \sum_{S\cup\{0\} \text{ stable}}t^{|S|} \beta_{[0,n]}(S),
    \end{align*}
    where in the second equality we used \cite[Lemma 4.23]{ferroni-matherne-vecchi}. In the third equality we exchanged the two sums and used that if $S\cup\{0\}$ is stable, then all of its subsets are also stable.
\end{proof}
For $C,D \subseteq [0, N]$, denote by $R[C,D]$ the submatrix of $R$ with rows  indexed by $C$ and columns indexed by $D$.
\begin{corollary}\label{cor: coefficients of gamma as determinants}
    Let $R=(r_{n,k})_{n,k=0}^N$ be a lower triangular matrix with all diagonal elements equal to one. Then  the $n$th $\gamma$-Chow polynomial is equal to 
    \[
    \gamma_n[R] = \sum_{\substack{ S \subseteq [n-1] \\ S\cup\{0\}   \text{ stable}}}\det(R[S\cup \{n\},\{0\}\cup S])t^{|S|}.
    \]
    Moreover the $\gamma$-polynomial of the $n$th augmented Chow polynomial of $R$ is equal to 
        \[
    \gamma(G_n[R]) = \sum_{\substack{ S \subseteq [n-1] \\ S \text{ stable}}}\det(R[S\cup \{n\},\{0\}\cup S])t^{|S|}.
    \]
\end{corollary}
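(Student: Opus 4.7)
The plan is to combine Theorem \ref{gamma-beta} with a Hessenberg-determinant identity, which I will prove by a signed bijection. Theorem \ref{gamma-beta} gives
\[
\gamma_n[R] \;=\; \sum_{\substack{S\subseteq [n-1] \\ S\cup\{0\}\text{ stable}}} \beta_{[0,n]}(S)\,t^{|S|},
\]
so the first assertion reduces to the purely combinatorial identity
\[
\beta_{[0,n]}(S) \;=\; \det\bigl(R[S\cup\{n\},\,\{0\}\cup S]\bigr)
\]
for every $S\subseteq [n-1]$; note that the stability hypothesis plays no role at this step. Writing $S = \{s_1 < \cdots < s_m\}$ and setting $s_0 := 0$, $s_{m+1} := n$, the minor in question is the $(m+1)\times(m+1)$ matrix $M$ with $M_{i,j} = r_{s_i, s_{j-1}}$. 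Since $R$ is lower triangular with unit diagonal, $M$ is a lower Hessenberg matrix whose superdiagonal $M_{i,i+1} = r_{s_i,s_i}$ is identically $1$; hence only permutations $\sigma$ of $[m+1]$ satisfying $\sigma(i) \leq i+1$ for every $i \leq m$ contribute in the Leibniz expansion.

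The key step is to biject these surviving permutations with subsets $T = \{s_{i_1} < \cdots < s_{i_k}\} \subseteq S$. For each such $T$, define $\sigma_T$ by $\sigma_T(i_j) = i_{j-1}+1$ (with $i_0 := 0$), $\sigma_T(m+1) = i_k+1$, and $\sigma_T(i) = i+1$ for every remaining $i \in [m]\setminus\{i_1,\ldots,i_k\}$. One directly checks that $\sigma_T$ is a valid permutation and that its Leibniz contribution equals $r_{s_{i_1},0}\,r_{s_{i_2},s_{i_1}}\cdots r_{n,s_{i_k}} = \alpha_{[0,n]}(T)$, because every superdiagonal factor contributes $1$. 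Conversely, given any valid $\sigma$, iterating $j \mapsto \sigma(j)-1$ starting from $m+1$ produces a strictly decreasing chain that reaches $0$, and a brief induction (processing the indices off the chain in increasing order) forces $\sigma(i) = i+1$ at every such $i$. Thus $\sigma = \sigma_T$ with $T$ read off from the chain, giving the desired bijection.

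A direct inspection of $\sigma_T$ yields the cycle decomposition consisting of $(i_{j-1}+1, i_{j-1}+2,\ldots, i_j)$ for $j = 1,\ldots,k$ together with the tail cycle $(i_k+1,\ldots,m+1)$: these are $k+1$ cycles of total length $m+1$, so $\mathrm{sgn}(\sigma_T) = (-1)^{(m+1)-(k+1)} = (-1)^{|S\setminus T|}$. Summing over $T$ produces exactly $\beta_{[0,n]}(S)$ and proves the first identity. The augmented statement follows immediately via \eqref{aug-matrx}, $G_n[R] = H_{n+1}[\overline{R}]$: applying the first identity to $\overline{R}$ expresses $\gamma(G_n[R])$ as a sum over subsets $S'\subseteq [n]$ with $S'\cup\{0\}$ stable, and the shift $S \mapsto S' = \{s+1 : s\in S\}$ is a bijection from stable $S\subseteq [n-1]$ onto such $S'$ (stability of $S'\cup\{0\}$ already forces $1\notin S'$). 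From the very definition of $\overline{R}$ one verifies that the minor $\overline{R}[S'\cup\{n+1\},\{0\}\cup S']$ coincides entry-by-entry with $R[S\cup\{n\},\{0\}\cup S]$, so the two sums agree term by term. The principal obstacle is the sign and cycle bookkeeping in the bijection; once that is handled, both identities drop out.
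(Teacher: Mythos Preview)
Your proof is correct and follows the same overall structure as the paper: combine Theorem~\ref{gamma-beta} with the determinant identity $\beta_{[0,n]}(S) = \det(R[S\cup\{n\},\{0\}\cup S])$, then derive the augmented case from \eqref{aug-matrx}. The only difference is that the paper cites this determinant identity from \cite[Theorem~5.12]{branden-saud-1}, whereas you supply a direct self-contained proof via the Hessenberg structure of the minor and a bijection with subsets $T\subseteq S$. Your sign and cycle bookkeeping checks out (the permutations with $\sigma(i)\le i+1$ are precisely your $\sigma_T$, and the $k+1$ cycles you describe give $\mathrm{sgn}(\sigma_T)=(-1)^{|S\setminus T|}$), and your verification that the shift $S\mapsto S'=\{s+1:s\in S\}$ matches the $\overline{R}$-minors with the $R$-minors is accurate. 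So the two arguments are essentially the same, with yours being more self-contained at the cost of a bit more bookkeeping.
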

\begin{proof}
    The proof of the first statement follows from Theorem \ref{gamma-beta} and \cite[Theorem 5.12]{branden-saud-1},  where it was proved that $\beta_{[0,n]}(S)= \det(R[S\cup \{n\},\{0\}\cup S])$ for each $S \subseteq [n-1]$.

    The proof of the second statement follows from that of the first by using \eqref{aug-matrx}. 
\end{proof}

Notice that when $R$ is $\TN$, then Corollary \ref{cor: coefficients of gamma as determinants} immediately implies that the coefficients of the $\gamma$-Chow polynomials associated to $R$ are nonnegative.

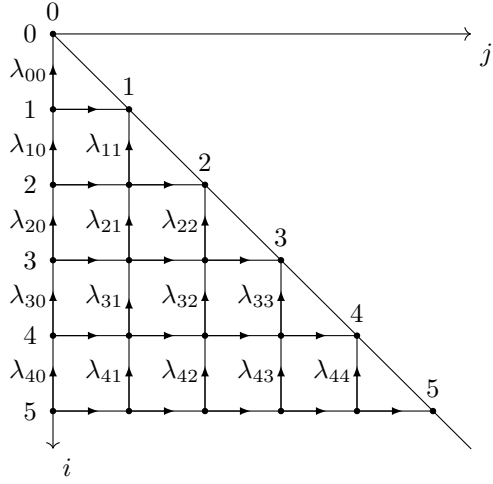
\begin{figure}
\centering
\begin{tikzpicture}[line cap=round,line join=round,x=1cm,y=1cm,scale=1]
    \draw [xstep=0.5cm,ystep=0.5cm];
    \clip(-1,-6) rectangle (7,1);
    \draw [->] (0,0) -- (5.5,0);
    \draw [->] (0,0) -- (0,-5.5);
    \draw  (0,-1) -- (1,-1);
    \draw  (0,-2) -- (2,-2);
    \draw  (0,-3) -- (3,-3);
    \draw  (0,-4) -- (4,-4);
    \draw  (0,-5) -- (5,-5);
    \draw  (1,-5) -- (1,-1);
    \draw  (2,-5) -- (2,-2);
    \draw  (3,-5) -- (3,-3);
    \draw  (4,-5) -- (4,-4);
    \draw [domain=0:5.5] plot(\x,{(-0-2*\x)/2});
    \draw [->,-latex] (0,-1) -- (0,-0.4);
    \draw [->,-latex] (0,-1) -- (0.6,-1);
    \draw [->,-latex] (0,-2) -- (0,-1.4);
    \draw [->,-latex] (0,-2) -- (0.6,-2);
    \draw [->,-latex] (1,-2) -- (1,-1.4);
    \draw [->,-latex] (1,-2) -- (1.6,-2);
    \draw [->,-latex] (0,-3) -- (0,-2.4);
    \draw [->,-latex] (0,-3) -- (0.6,-3);
    \draw [->,-latex] (1,-3) -- (1,-2.4);
    \draw [->,-latex] (1,-3) -- (1.6,-3);
    \draw [->,-latex] (2,-3) -- (2,-2.4);
    \draw [->,-latex] (2,-3) -- (2.6,-3);
    \draw [->,-latex] (0,-4) -- (0,-3.4);
    \draw [->,-latex] (0,-5) -- (0,-4.4);
    \draw [->,-latex] (0,-4) -- (0.6,-4);
    \draw [->,-latex] (0,-5) -- (0.6,-5);
    \draw [->,-latex] (1,-4) -- (1,-3.5);
    \draw [->,-latex] (1,-4) -- (1.6,-4);
    \draw [->,-latex] (2,-4) -- (2,-3.4);
    \draw [->,-latex] (2,-4) -- (2.6,-4);
    \draw [->,-latex] (3,-4) -- (3,-3.4);
    \draw [->,-latex] (3,-4) -- (3.6,-4);
    \draw [->,-latex] (1,-5) -- (1,-4.4);
    \draw [->,-latex] (1,-5) -- (1.6,-5);
    \draw [->,-latex] (2,-5) -- (2,-4.4);
    \draw [->,-latex] (2,-5) -- (2.6,-5);
    \draw [->,-latex] (3,-5) -- (3,-4.4);
    \draw [->,-latex] (3,-5) -- (3.6,-5);
    \draw [->,-latex] (4,-5) -- (4,-4.4);
    \draw [->,-latex] (4,-5) -- (4.6,-5);
    \draw (0,0.3) node {0};
    \draw (1,-0.7) node {1};
    \draw (2,-1.7) node {2};
    \draw (3,-2.7) node {3};
    \draw (4,-3.7) node {4};
    \draw (5,-4.7) node {5};
    \draw (-0.3,0) node {0};
    \draw (-0.3,-1) node {1};
    \draw (-0.3,-2) node {2};
    \draw (-0.3,-3) node {3};
    \draw (-0.3,-4) node {4};
    \draw (-0.3,-5) node {5};
    \draw (-0.7,-0.2) node[anchor=north west] {$\lambda_{00}$};
    \draw (-0.7,-1.2) node[anchor=north west] {$\lambda_{10}$};
    \draw (-0.7,-2.2) node[anchor=north west] {$\lambda_{20}$};
    \draw (-0.7,-3.2) node[anchor=north west] {$\lambda_{30}$};
    \draw (-0.7,-4.2) node[anchor=north west] {$\lambda_{40}$};
    \draw (0.3,-1.2) node[anchor=north west] {$\lambda_{11}$};
    \draw (0.3,-2.2) node[anchor=north west] {$\lambda_{21}$};
    \draw (0.3,-3.2) node[anchor=north west] {$\lambda_{31}$};
    \draw (0.3,-4.2) node[anchor=north west] {$\lambda_{41}$};
    \draw (1.3,-4.2) node[anchor=north west] {$\lambda_{42}$};
    \draw (2.3,-4.2) node[anchor=north west] {$\lambda_{43}$};
    \draw (3.3,-4.2) node[anchor=north west] {$\lambda_{44}$};
    \draw (1.3,-3.2) node[anchor=north west] {$\lambda_{32}$};
    \draw (2.3,-3.2) node[anchor=north west] {$\lambda_{33}$};
    \draw (1.3,-2.2) node[anchor=north west] {$\lambda_{22}$};
    \draw (5.5,0) node[anchor=north west] {$j$};
    \draw (0,-5.5) node[anchor=north west] {$i$};
    \begin{scriptsize}
        \draw [fill=black] (0,0) circle (1.0pt);
        \draw [fill=black] (2,-2) circle (1.0pt);
        \draw [fill=black] (0,-1) circle (1.0pt);
        \draw [fill=black] (0,-2) circle (1.0pt);
        \draw [fill=black] (1,-2) circle (1.0pt);
        \draw [fill=black] (0,-3) circle (1.0pt);
        \draw [fill=black] (1,-3) circle (1.0pt);
        \draw [fill=black] (2,-3) circle (1.0pt);
        \draw [fill=black] (0,-4) circle (1.0pt);
        \draw [fill=black] (0,-5) circle (1.0pt);
        \draw [fill=black] (1,-4) circle (1.0pt);
        \draw [fill=black] (2,-4) circle (1.0pt);
        \draw [fill=black] (3,-4) circle (1.0pt);
        \draw [fill=black] (1,-5) circle (1.0pt);
        \draw [fill=black] (2,-5) circle (1.0pt);
        \draw [fill=black] (3,-5) circle (1.0pt);
        \draw [fill=black] (4,-5) circle (1.0pt);
        \draw [fill=black] (1,-1) circle (1.0pt);
        \draw [fill=black] (3,-3) circle (1.0pt);
        \draw [fill=black] (4,-4) circle (1.0pt);
        \draw [fill=black] (5,-5) circle (1.0pt);
    \end{scriptsize}
\end{tikzpicture}
\caption{The directed graph $\Gamma_N$ with its weights.}
\label{fig:Gamma}
\end{figure}

 Suppose $R$ is a resolvable matrix. Then the minors of $R$ have a combinatorial interpretation in terms  of the $\lambda_{n,k}$'s as follows. 
 For $N \in \NN\cup\{\infty\}$, let $\Gamma_N$ be the directed graph on $\{ (i,j) \in \NN^2: j\leq i \leq N\}$ with edges 
$$
(i,j) \to (i,j+1) \ \ \ \mbox{ and } \ \ \ (i+1,j) \to (i,j), 
$$
and let $\lambda=(\lambda_{i,j})_{0\leq j \leq i < N}$ be an array of nonnegative numbers.

Attach the weight $\lambda_{i,j}$ to the vertical edge $(i+1,j) \to (i,j)$, and attach the weight $1$ to each horizontal edge, see Figure \ref{fig:Gamma}. Let further $r_{n,k}(\Gamma_N, \lambda)$ be the weighted sum of all paths from $(n,0)$ to $(k,k)$, where the weight of a path is the product of the weights of the edges used in the path, and let $R(\Gamma_N, \lambda)= \left(r_{n,k}(\Gamma_N, \lambda)\right)_{n,k=0}^N$.  By \cite[Theorems 2.1 and 2.6]{branden-saud-1}, $R=R(\Gamma_N, \lambda)$.

 Associate to $\sigma \in \mathfrak{S}_n$ a function $f=f_\sigma : [n] \rightarrow \NN$ for which $f(i)\leq i-1$ for each $i$ by  
$$
f_\sigma(j)= |\{i<j:\sigma(i)>\sigma(j)\}|. 
$$ 
Recall that $i \in [n-1]$ is called a \emph{descent} of $\sigma \in \mathfrak{S}_n$ if $\sigma(i) > \sigma(i+1)$. Let 
$\mathrm{D}(\sigma)= \{i \in [n-1] :  \sigma(i) > \sigma(i+1)\}$ denote the \emph{descent set} of $\sigma$. 
The following theorem now follows from a result due to Gessel and Viennot \cite[Corollary 6]{gessel-viennot}. 
\begin{theorem}\label{gessel-viennot-cor}
Let $R$ be a resolvable matrix, and let $\{\lambda_{n,k}\}$ be the associated array of nonnegative numbers, see Definition \ref{resolv}. Then 
$$
\det(R[S\cup\{n\}, \{0\}\cup S])= \sum_{\substack{ \sigma \in \mathfrak{S}_n  \\ \mathrm{D}(\sigma)=S}} \prod_{i=1}^n \lambda_{i-1, f_\sigma(i)}. 
$$
\end{theorem}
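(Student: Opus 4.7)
The plan is to apply the Lindström--Gessel--Viennot (LGV) lemma to the weighted-path expression $r_{n,k}=r_{n,k}(\Gamma_N,\lambda)$ for the entries of $R$ recalled just before the theorem, and then match non-intersecting path families to permutations with the prescribed descent set. Fix $S=\{s_1<\cdots<s_k\}\subseteq[n-1]$ and set $s_0=0$, $s_{k+1}=n$. The minor $\det(R[S\cup\{n\},\{0\}\cup S])$ is the $(k+1)\times(k+1)$ determinant whose $(m,m')$-entry is the total weight in $\Gamma_N$ of paths from the source $A_m=(a_m,0)$ to the sink $B_{m'}=(b_{m'},b_{m'})$, with $(a_1,\ldots,a_{k+1})=(s_1,\ldots,s_k,n)$ and $(b_1,\ldots,b_{k+1})=(0,s_1,\ldots,s_k)$.

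I would first argue that only the identity pairing contributes to the LGV expansion: the sources $A_m$ on the left boundary of $\Gamma_N$ and the sinks $B_m$ on the diagonal lie, in the non-crossing cyclic order corresponding to the identity matching, on the boundary of the planar region traversed by up-right paths, so every non-identity pairing forces two paths to share a vertex. Consequently
\[
\det(R[S\cup\{n\},\{0\}\cup S])=\sum_{(P_1,\ldots,P_{k+1})}\prod_{m=1}^{k+1}w(P_m),
\]
where the sum runs over non-intersecting families with $P_m\colon A_m\to B_m$.

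The main step is a weight-preserving bijection between such families and permutations $\sigma\in\mathfrak{S}_n$ with $\mathrm{D}(\sigma)=S$. I would encode a family by the sequence $(c_1,\ldots,c_n)$, where $c_i$ is the column of the unique vertical edge from row $i$ to row $i-1$ appearing across the $P_m$; since the row intervals $[s_{m-1}+1,s_m]$ traversed by the $P_m$ partition $[1,n]$, each downward step belongs to a unique $P_m$. The monotonicity of columns along the up-right path $P_m$ makes $(c_i)$ weakly decreasing on each block $[s_{m-1}+1,s_m]$ and bounded above by $b_m$; at each junction row $s_j$ the paths $P_j$ and $P_{j+1}$ occupy the column ranges $[0,c_{s_j}]$ and $[c_{s_j+1},s_j]$, respectively, so non-intersection amounts to the strict inequality $c_{s_j}<c_{s_j+1}$.

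These conditions exactly characterise the image of the code map $\sigma\mapsto f_\sigma$ on $\{\sigma\in\mathfrak{S}_n:\mathrm{D}(\sigma)=S\}$ --- namely values in $\{0,\ldots,i-1\}$, weakly decreasing at indices outside $S$, and strictly increasing at each element of $S$ --- so setting $c_i:=f_\sigma(i)$ gives the required bijection and identifies the path weight $\prod_{i=1}^n\lambda_{i-1,c_i}$ with $\prod_{i=1}^n\lambda_{i-1,f_\sigma(i)}$, concluding the plan. The main obstacle will be the careful bookkeeping that translates the geometric non-intersection conditions at each junction row $s_j$ into the strict-increase condition on the code at position $s_j$, and verifying that the block-wise path bounds $c_i\leq b_{m(i)}$ are equivalent, together with the block structure, to the global code bound $c_i\leq i-1$.
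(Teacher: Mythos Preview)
Your proposal is correct and follows the same route as the paper: the paper simply states that the identity ``follows from a result due to Gessel and Viennot \cite[Corollary~6]{gessel-viennot}'', and what you have written is precisely that argument spelled out for the network $\Gamma_N$---applying Lindstr\"om--Gessel--Viennot to the path description of the entries $r_{n,k}$, observing that planarity forces the identity matching, and using the standard bijection between inversion tables and permutations to identify non-intersecting families with permutations of prescribed descent set. Your bookkeeping is sound: within a block the column sequence is weakly decreasing with $c_{s_{m-1}+1}\le s_{m-1}$ (which together are equivalent to $c_i\le i-1$), and the vertex-disjointness at row $s_j$ gives exactly $c_{s_j}<c_{s_j+1}$, matching the well-known fact that $f_\sigma(i+1)\le f_\sigma(i)$ at ascents and $f_\sigma(i+1)>f_\sigma(i)$ at descents.
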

The next theorem now follows from Theorems \ref{gamma-beta} and \ref{gessel-viennot-cor}. 

\begin{theorem}\label{gamma-lambdas}
  Let $R$ be a resolvable matrix, and let $\{\lambda_{n,k}\}$ be the associated array of nonnegative numbers. Then 
\begin{align*}
      \gamma_n[R] &= \sum_{\substack{ \sigma \in  \mathfrak{S}_n  \\ \mathrm{D}(\sigma)\cup\{0\} \text{ stable} }} t^{\des(\sigma)} \prod_{i=1}^n \lambda_{i-1, f_\sigma(i)}, \quad \mbox{ and } \\
      \gamma(G_n[R]) &= \sum_{\substack{ \sigma \in  \mathfrak{S}_n  \\ \mathrm{D}(\sigma) \text{ stable} }} t^{\des(\sigma)} \prod_{i=1}^n \lambda_{i-1, f_\sigma(i)}.
\end{align*}

\end{theorem}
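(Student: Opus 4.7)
The plan is to combine two previously established ingredients in a direct way: Corollary \ref{cor: coefficients of gamma as determinants}, which expresses each coefficient of $\gamma_n[R]$ (and of $\gamma(G_n[R])$) as a minor of $R$, and Theorem \ref{gessel-viennot-cor}, which provides a Gessel--Viennot expansion of exactly these minors as a permutation sum weighted by the $\lambda_{i-1,f_\sigma(i)}$'s. No additional machinery should be required; the result is a double-sum-and-swap away.

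More concretely, I would start from
\[
\gamma_n[R] = \sum_{\substack{ S \subseteq [n-1] \\ S\cup\{0\} \text{ stable}}} \det(R[S\cup \{n\},\{0\}\cup S])\, t^{|S|},
\]
given by Corollary \ref{cor: coefficients of gamma as determinants}, and substitute the identity
\[
\det(R[S\cup\{n\}, \{0\}\cup S]) = \sum_{\substack{\sigma \in \mathfrak{S}_n \\ \mathrm{D}(\sigma) = S}} \prod_{i=1}^n \lambda_{i-1, f_\sigma(i)}
\]
from Theorem \ref{gessel-viennot-cor}. Swapping the order of summation, every $\sigma\in\mathfrak{S}_n$ is counted exactly once, namely at $S=\mathrm{D}(\sigma)$, and only those $\sigma$ with $\mathrm{D}(\sigma)\cup\{0\}$ stable contribute. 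Since $|\mathrm{D}(\sigma)|=\des(\sigma)$, the exponent $t^{|S|}$ becomes $t^{\des(\sigma)}$, and the first formula follows.

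For the augmented case, the same argument applies verbatim, using the second half of Corollary \ref{cor: coefficients of gamma as determinants}, which differs only in that the stability condition is imposed on $S$ alone rather than $S\cup\{0\}$. The "hard part" is really not hard at all here; the work was already done in proving Theorem \ref{gamma-beta} and Theorem \ref{gessel-viennot-cor}. The only thing one must double-check is that the index conventions used in the Gessel--Viennot formula (rows $S\cup\{n\}$, columns $\{0\}\cup S$, and the function $f_\sigma(j)=|\{i<j:\sigma(i)>\sigma(j)\}|$) match the ones used in the $\gamma$-expansion, so that the resulting sum genuinely runs over $\mathfrak{S}_n$ without extra sign or reindexing corrections. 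This is just a bookkeeping verification.
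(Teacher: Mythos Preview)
Your proposal is correct and essentially identical to the paper's own proof, which simply states that the theorem follows from Theorems \ref{gamma-beta} and \ref{gessel-viennot-cor} (equivalently, Corollary \ref{cor: coefficients of gamma as determinants} together with Theorem \ref{gessel-viennot-cor}). You have merely spelled out the substitution and swap of summation that the paper leaves implicit.
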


\subsection{\texorpdfstring{Interlacing properties of $\gamma$-Chow polynomials}{Interlacing gamma-Chow polynomials}}
Next we will prove some refined interlacing properties satisfied by $\gamma$-Chow polynomials of $\TN$-matrices. These will be used in Section~\ref{pavingsection} to prove that Chow polynomials of paving matroids are real-rooted. 

The following result relates the zeros of a polynomial with the zeros of its $\gamma$-polynomial.
\begin{lemma}\label{gammaint}
Suppose $g$ and $h$ are palindromic polynomials in $\RR_{\geq 0}[t]$. Then $g$ is real-rooted if and only if $\gamma(g)$ is real-rooted. Moreover: 
\begin{enumerate}
    \item If $g$ and $h$ have center of symmetry $n/2$ and $(n+1)/2$, respectively, then $g \prec h$ if and only if $\gamma(g) \prec \gamma(h)$.
    \item If $g$ and $h$ have the same center of symmetry, then $g \prec (1+t)h$ if and only if $\gamma(g) \prec \gamma(h)$.
\end{enumerate}
\end{lemma}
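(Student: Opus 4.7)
The engine of the argument is the change of variable $u = \phi(t) := t/(1+t)^2$ built into the defining identity $p(t) = (1+t)^n\gamma(p)(\phi(t))$ for a palindromic polynomial $p$ of center $n/2$. My first step is the algebraic identity
\[
(t-\alpha)(t-1/\alpha) = (1+t)^2\bigl[1 - (\alpha + 1/\alpha + 2)\,\phi(t)\bigr],
\]
which shows that each reciprocal pair $\{\alpha, 1/\alpha\}$ of zeros of a palindromic polynomial contributes a single linear factor $1 - (\alpha + 1/\alpha + 2)\,u$ to its $\gamma$-polynomial, whose unique zero $\phi(\alpha) = 1/(\alpha + 1/\alpha + 2)$ is nonpositive when $\alpha\le -1$. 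Since a palindromic $g\in\RR_{\ge 0}[t]$ has all its real zeros nonpositive and reciprocally paired, real-rootedness of $g$ translates into $\gamma(g)$ being a product of real linear factors with nonpositive roots. Conversely, a real nonpositive zero of $\gamma(g)$ pulls back under $\phi$ to a reciprocal pair of real zeros of $g$, using that $\phi$ restricts to monotone bijections $(-\infty,-1]\to (-\infty,0]$ and $[-1,0]\to (-\infty,0]$.

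For part (1), the palindromicity of $g$ and $h$ lets me encode each polynomial by its ``inner'' zeros in $[-1,0]$; the outer halves are reciprocal images of the inner ones and share their $\phi$-values. Interlacing $g \prec h$ of the full zero sets is therefore equivalent to interlacing of the inner halves. Transporting these inner zeros through the monotone bijection $\phi|_{[-1,0]} \colon [-1,0] \to (-\infty,0]$ yields interlacing of the zeros of $\gamma(g)$ and $\gamma(h)$; the correct sign of the Wronskian follows since this restriction of $\phi$ is order-preserving and $\gamma(g),\gamma(h)$ have positive leading coefficients. The reverse implication is identical.

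Part (2) reduces cleanly to part (1): by multiplicativity of $\gamma$ one has $\gamma((1+t)h) = \gamma(1+t)\,\gamma(h) = \gamma(h)$, and $(1+t)h$ has center of symmetry $(n+1)/2$ when $h$ has center $n/2$. Thus $g \prec (1+t)h$ is exactly the input for part (1) applied to $g$ and $(1+t)h$, yielding $\gamma(g)\prec\gamma((1+t)h)=\gamma(h)$, and conversely.

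The main obstacle will be careful handling of boundary cases: zeros of $g$ at $t=-1$ (where $\phi$ has a pole and the $\gamma$-degree correspondingly drops), zeros at $t=0$, and multiplicities in the pairing step. If this bookkeeping proves unpleasant, a more formal alternative is the Hermite--Biehler theorem (Theorem~\ref{thm: Hermite-Biehler}): from $\operatorname{Im}\phi(t) = \operatorname{Im}(t)(1-|t|^2)/|1+t|^4$ the map $\phi$ sends the upper semi-disk $\{|t|<1,\ \operatorname{Im}(t)>0\}$ to the upper half-plane, and the palindromic symmetry then transfers stability of $h+ig$ in $t$ to stability of $\gamma(h)+i\gamma(g)$ in $u$.
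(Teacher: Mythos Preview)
Your proof is correct and follows essentially the same route as the paper. The paper's own argument is terser: it declares the real-rootedness equivalence ``well known,'' handles zeros at $t=0$ by a reduction via Lemma~\ref{addz} and $\gamma(tf)=t\gamma(f)$, and then cites \cite[Proposition~2.5]{hosterstump} for the heart of part~(1); part~(2) is deduced from part~(1) via $\gamma((1+t)h)=\gamma(h)$, exactly as you do. What you have written out explicitly --- the factorization $(t-\alpha)(t-1/\alpha)=(1+t)^2\bigl[1-(\alpha+1/\alpha+2)\phi(t)\bigr]$ and the transport of interlacing through the monotone bijection $\phi|_{(-1,0]}$ --- is precisely the content behind that citation. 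Your acknowledgment that zeros at $t=-1$ and $t=0$ require bookkeeping is apt; the paper sidesteps the $t=0$ issue by its explicit reduction, and the $t=-1$ case (which drops the degree of $\gamma$) can be absorbed by a continuity/limiting argument or by factoring out $(1+t)$ first, just as you suggest.
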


\begin{proof}
 The initial statement is well known and straightforward. For (1), we may use Lemma \ref{addz} and   $\gamma(tf) = t\gamma(f)$ to reduce it to the case when $0$ is not a zero of $g$ or $h$. Then (1) follows as in \cite[Proposition~2.5]{hosterstump}. Statement (2) follows from (1) using $\gamma((1+t)f) = \gamma(f)$. 
\end{proof}

Let $R$ be a lower triangular matrix with all diagonal entries equal to one. Define  polynomials 
$\sigma_{n,k}$ and $\tau_{n,k}$, $0\leq k \leq n$,  in $\R[t]$ by 
$$
\sigma_{n,k} = \gamma(\SSS_n(d_{n,k})) \quad \mbox{ and } \quad \tau_{n,k} = \gamma(\SSS_{n+1}(d_{n,k})). 
$$

We collect some simple properties of the operators $\SSS_n$ in a lemma. 
\begin{lemma}\label{snid}
For any polynomial $f$ of degree at most $n$, 
\begin{align*}
\SSS_{n+1}(f) &= t\SSS_n(f) + f, \ \ \SSS_n^2(f)= \SSS_n(f),  \\
\I_n\SSS_{n+1}(f)& = \SSS_{n+1}(f) \ \     \mbox{ and }  \ \ \SSS_{n+1}\SSS_n(f) = (1+t) \SSS_n(f). 
\end{align*}
\end{lemma}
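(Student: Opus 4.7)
The plan is to verify the four identities in sequence, each by direct manipulation of the definition $\SSS_n(f) = (\I_n(f) - f)/(t-1)$. The two basic tools are the involutive nature of $\I_n$ on polynomials of degree at most $n$, and the shift relation $\I_{n+1}(f) = t\,\I_n(f)$ whenever $\deg f \leq n$, which is immediate from $t^{n+1}f(t^{-1}) = t\cdot t^n f(t^{-1})$. Note that $\SSS_n(f)$ is well defined on polynomials of degree at most $n$ because $\I_n(f) - f$ vanishes at $t=1$.

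First I would obtain the first identity by substituting $\I_{n+1}(f) = t\,\I_n(f)$ into the definition of $\SSS_{n+1}(f)$ and rewriting $t\I_n(f) - f$ as $t(\I_n(f)-f) + (t-1)f$. Before tackling the second identity, I would record two structural facts about $\SSS_n(f)$ that will also be used later: it has degree at most $n-1$ (because $\I_n(f)-f$ vanishes at $t=1$), and it is palindromic with centre of symmetry $(n-1)/2$, i.e.\ $\I_{n-1}(\SSS_n(f)) = \SSS_n(f)$. The palindromicity is seen by applying $\I_n$ to both sides of $(t-1)\SSS_n(f) = \I_n(f) - f$: the right-hand side becomes $f - \I_n(f) = -(t-1)\SSS_n(f)$ by the involutive property, while the left-hand side equals $(1-t)\I_{n-1}(\SSS_n(f))$, since for a polynomial $g$ of degree at most $n-1$ we have $\I_n((t-1)g) = (1-t)\I_{n-1}(g)$.

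The second identity then follows at once, because $\I_n(\SSS_n(f)) = t\,\I_{n-1}(\SSS_n(f)) = t\SSS_n(f)$ by the shift relation and the palindromicity just derived, so $\SSS_n^2(f) = (t\SSS_n(f) - \SSS_n(f))/(t-1) = \SSS_n(f)$. For the third identity I would plug the first identity into $\I_n(\SSS_{n+1}(f)) = \I_n(t\SSS_n(f)) + \I_n(f)$ and simplify: the first summand equals $\I_{n-1}(\SSS_n(f)) = \SSS_n(f)$ by the same shift-and-palindromicity argument, and the second is $f + (t-1)\SSS_n(f)$ directly from the definition of $\SSS_n$; adding them gives $t\SSS_n(f) + f = \SSS_{n+1}(f)$. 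Finally, the fourth identity is obtained by applying the first identity with $\SSS_n(f)$ in place of $f$ (permissible since $\deg \SSS_n(f) \leq n-1 \leq n$) and invoking the idempotence from the second identity.

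No step here presents a genuine obstacle; the only subtlety is that $\I_n$ is a degree-dependent involution, so the correct value of $n$ must be tracked carefully when applying the shift relation and when factoring polynomials inside $\I_n$.
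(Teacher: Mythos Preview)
Your proof is correct. The paper does not prove this lemma at all; it introduces it as ``some simple properties of the operators $\SSS_n$'' and states it without proof, so your direct verification from the definition $\SSS_n(f) = (\I_n(f)-f)/(t-1)$, together with the shift relation $\I_{n+1}(f) = t\,\I_n(f)$ and the palindromicity $\I_{n-1}(\SSS_n(f)) = \SSS_n(f)$, is exactly the kind of elementary check the authors have in mind.
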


\begin{lemma}\label{recsys}
For $0\leq k \leq n+1$, 
\begin{align*}
\sigma_{n+1, k} &= \sum_{j=k}^n \lambda_{n,j}\tau_{n,j}, \\
\tau_{n+1,k} &= t\sum_{j=0}^{k-1}\lambda_{n,j}\sigma_{n,j} +\sum_{j=k}^n \lambda_{n,j}\tau_{n,j}. 
\end{align*}
\end{lemma}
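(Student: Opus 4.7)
My plan is to substitute the recursion of Lemma~\ref{dnkrec} into the definitions of $\sigma_{n+1,k}$ and $\tau_{n+1,k}$ and then manipulate the result with the operator identities in Lemma~\ref{snid} until the $\gamma$-map can be applied termwise. For brevity let
\[
P := \sum_{j=0}^n \lambda_{n,j}\, d_{n,j} \qquad \text{and} \qquad Q_k := \sum_{j=k}^n \lambda_{n,j}\, d_{n,j},
\]
so that $d_{n+1,k} = t\SSS_n(P) + Q_k$. The overall strategy is to decompose $\SSS_{n+1}(d_{n+1,k})$ and $\SSS_{n+2}(d_{n+1,k})$ into summands sharing a common center of symmetry; once that is done, the additivity of $\gamma$ on such polynomials produces the two asserted identities.

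For $\sigma_{n+1,k}$ the decisive observation will be that $\SSS_{n+1}(t\SSS_n(P)) = 0$. Indeed, $\SSS_n(P)$ has degree at most $n-1$ and is fixed by $\I_{n-1}$ (Lemma~\ref{snid}), and a direct check shows $\I_{n+1}(t\SSS_n(P)) = t\SSS_n(P)$, which kills it under $\SSS_{n+1}$. Hence $\SSS_{n+1}(d_{n+1,k}) = \SSS_{n+1}(Q_k) = \sum_{j=k}^n \lambda_{n,j}\SSS_{n+1}(d_{n,j})$. Since each $\SSS_{n+1}(d_{n,j})$ is palindromic of center $n/2$, the $\gamma$-map distributes and produces the first identity.

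For $\tau_{n+1,k}$ I will begin with $\SSS_{n+2}(d_{n+1,k}) = t\SSS_{n+1}(d_{n+1,k}) + d_{n+1,k}$ (Lemma~\ref{snid}). Substituting the previous computation and expanding $Q_k$ via the identity $d_{n,j} = \SSS_{n+1}(d_{n,j}) - t\SSS_n(d_{n,j})$ (also immediate from Lemma~\ref{snid}), the $t\SSS_n(P)$ contribution cancels the $\SSS_n$-part of $Q_k$ on indices $j \geq k$, leaving
\[
\SSS_{n+2}(d_{n+1,k}) = (1+t)\sum_{j=k}^n \lambda_{n,j}\SSS_{n+1}(d_{n,j}) + t\sum_{j=0}^{k-1}\lambda_{n,j}\SSS_n(d_{n,j}).
\]
Every summand is palindromic with common center $(n+1)/2$; since $\gamma((1+t)\SSS_{n+1}(d_{n,j})) = \tau_{n,j}$ and $\gamma(t\SSS_n(d_{n,j})) = t\sigma_{n,j}$, applying $\gamma$ yields the second identity.

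The main obstacle, and really the only point requiring care, is the careful bookkeeping of centers of symmetry as one multiplies by $t$ or $(1+t)$ and passes between $\SSS_n$, $\SSS_{n+1}$, and $\SSS_{n+2}$. Once the centers line up, everything reduces to additivity of $\gamma$ on palindromic polynomials with a common center, and the nontrivial split structure of the second identity emerges precisely from the cancellation highlighted above.
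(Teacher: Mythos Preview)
Your argument is correct and follows essentially the same route as the paper: both start from Lemma~\ref{dnkrec}, massage $d_{n+1,k}$ with the identities of Lemma~\ref{snid}, and then apply $\gamma$ termwise once the centers of symmetry align. Your observation that $\SSS_{n+1}(t\SSS_n(P))=0$ is equivalent to the paper's use of the rewriting $d_{n+1,k}=\SSS_{n+1}(P)-\sum_{j<k}\lambda_{n,j}d_{n,j}$ together with $\SSS_{n+1}^2=\SSS_{n+1}$, and your explicit cancellation in the second identity spells out what the paper summarizes as ``after some computations.''
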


\begin{proof}
By Lemmas \ref{dnkrec} and \ref{snid}, 
\begin{equation}\label{sn1eq}
d_{n+1,k}= \SSS_{n+1}\left(\sum_{j \geq 0}\lambda_{n,j}d_{n,j}\right) - \sum_{j <k}\lambda_{n,j}d_{n,j}.
\end{equation}
Hence, by Lemma  \ref{snid}, 
$$
\SSS_{n+1}(d_{n+1,k})= \SSS_{n+1}\left(\sum_{j \geq 0}\lambda_{n,j}d_{n,j}\right) - \SSS_{n+1}\left(\sum_{j <k}\lambda_{n,j}d_{n,j}\right), 
$$
which proves the first equation by applying $\gamma$ on both sides. 

Applying $\SSS_{n+2}$ on both sides of \eqref{sn1eq}, using Lemma \ref{snid} yields after some computations
$$
\SSS_{n+2}(d_{n+1,k}) = (1+t)\SSS_{n+1}\left(\sum_{j \geq k}\lambda_{n,j}d_{n,j}\right)+ t \SSS_n \left(\sum_{j<k}\lambda_{n,j}d_{n,j}\right),
$$
from which the second equation follows after applying $\gamma$ on both sides. 
\end{proof}

\begin{theorem}\label{gammasysint}
If $R$ is resolvable and $0\leq n \leq N$, then 
\begin{enumerate}
\item $\sigma_{n,i} \prec \tau_{n,j}$ for all $i<j$, and 
\item $\{\sigma_{n,j}\}_{j=0}^n$ and $\{\tau_{n,j}\}_{j=0}^n$ are interlacing sequences. 
\end{enumerate}
\end{theorem}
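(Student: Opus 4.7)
I plan to prove (1) and (2) simultaneously by induction on $n$, using the recursions of Lemma~\ref{recsys} together with the convex-cone structure of Proposition~\ref{cones} and the reduction afforded by Lemma~\ref{wagner}.

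The base case $n=0$ is immediate: $\sigma_{0,0}=\gamma(\SSS_0(1))=0$ and $\tau_{0,0}=\gamma(\SSS_1(1))=1$ are single polynomials, so (1) is vacuous and (2) is trivial.

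For the inductive step, I first handle the $\sigma$-sequence at level $n+1$. By Lemma~\ref{recsys}, $\sigma_{n+1,k}=\lambda_{n,k}\tau_{n,k}+\sigma_{n+1,k+1}$. By the inductive hypothesis, $\tau_{n,k}\prec\tau_{n,j}$ for every $j>k$, so Proposition~\ref{cones} gives $\tau_{n,k}\prec\sigma_{n+1,k+1}$, the latter being a nonnegative combination of such $\tau_{n,j}$. A further application of Proposition~\ref{cones} then yields $\sigma_{n+1,k}\prec\sigma_{n+1,k+1}$ for each $k$. Since $\sigma_{n+1,n+1}=0$, Lemma~\ref{wagner} delivers the full interlacing of $\sigma_{n+1,0},\ldots,\sigma_{n+1,n+1}$.

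The $\tau$-sequence and the cross-claim (1) at level $n+1$ are more delicate, because the recursion
\[
\tau_{n+1,k+1}-\tau_{n+1,k}=\lambda_{n,k}(t\sigma_{n,k}-\tau_{n,k})
\]
mixes $\sigma$- and $\tau$-contributions whose leading coefficients may have opposite signs. The strategy is to view both $\tau_{n+1,k}$ and $\tau_{n+1,k+1}$ as nonnegative linear combinations of the common family $\{\tau_{n,j}\}_{j=0}^n\cup\{t\sigma_{n,j}\}_{j=0}^n$, and to show that this family, arranged in the alternating order
\[
\tau_{n,0},\,t\sigma_{n,0},\,\tau_{n,1},\,t\sigma_{n,1},\,\ldots,\,\tau_{n,n},\,t\sigma_{n,n},
\]
is itself an interlacing sequence. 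Granted this, Proposition~\ref{cones} together with Lemma~\ref{wagner} yields both the interlacing of $\{\tau_{n+1,k}\}$ and the cross-relation $\sigma_{n+1,i}\prec\tau_{n+1,j}$ for $i<j$, since each of these polynomials is a positive combination of the same interlacing family.

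The main obstacle is to set up and propagate the strengthened inductive hypothesis controlling the combined family $\{\tau_{n,j}\}\cup\{t\sigma_{n,j}\}$: one needs auxiliary interlacings such as $\tau_{n,i}\prec t\sigma_{n,j}$ for $i\leq j$ and $t\sigma_{n,i}\prec\tau_{n,j}$ for $i<j$, which must be verified in parallel with (1) and (2) by tracking carefully how the shuffling in Lemma~\ref{recsys} transports these relations from level $n$ to level $n+1$. Once this enlarged interlacing structure is in place, the remainder of the argument is a routine application of Proposition~\ref{cones} and Lemma~\ref{wagner}.
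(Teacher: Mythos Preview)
Your plan has a genuine gap: the alternating sequence
\[
\tau_{n,0},\ t\sigma_{n,0},\ \tau_{n,1},\ t\sigma_{n,1},\ \ldots,\ \tau_{n,n},\ t\sigma_{n,n}
\]
is \emph{not} interlacing in general, already on degree grounds. Recall that $\SSS_n(d_{n,k})$ is palindromic with center $(n-1)/2$ while $\SSS_{n+1}(d_{n,k})$ has center $n/2$, so $\deg\sigma_{n,k}\le\lfloor(n-1)/2\rfloor$ and $\deg\tau_{n,k}\le\lfloor n/2\rfloor$. For $n$ odd and $R$ the Pascal matrix one has $\sigma_{n,0}=\gamma(H_n)$ of exact degree $(n-1)/2$, so $t\sigma_{n,0}$ has degree $(n+1)/2$, strictly larger than any $\tau_{n,j}$. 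Concretely, for $n=3$ one computes $t\sigma_{3,0}=t+2t^2$ and $\tau_{3,1}=3t$, and $t\sigma_{3,0}\prec\tau_{3,1}$ is impossible. Hence the enlarged inductive hypothesis you propose cannot be set up, and the step from $\tau_{n+1,k}$ to $\tau_{n+1,k+1}$ does not go through by your route.

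The paper avoids this entirely by not proving (1) inductively. Since $\{d_{n,k}\}_k$ is already known to be $\I_n$-interlacing (Theorem~\ref{mainman}), Lemma~\ref{daint} gives $\SSS_n(d_{n,i})\prec \SSS_{n+1}(d_{n,j})$ for $i<j$, and Lemma~\ref{gammaint} translates this directly into $\sigma_{n,i}\prec\tau_{n,j}$. With (1) available for free at every level, the $\tau$-step is handled as follows: the endpoint relation $\tau_{n+1,0}\prec\tau_{n+1,n+1}$ again comes for free from Theorem~\ref{mainman} via Lemma~\ref{gammaint}, and the consecutive relation $\tau_{n+1,k}\prec\tau_{n+1,k+1}$ is obtained by writing
\[
\begin{pmatrix}\tau_{n+1,k}\\ \tau_{n+1,k+1}\end{pmatrix}
=\begin{pmatrix} t&0&1&1\\ t&t&0&1\end{pmatrix}
\begin{pmatrix}f_0\\ f_1\\ f_2\\ f_3\end{pmatrix},
\]
where $f_0,f_1,f_2,f_3$ (built from $\sigma_{n,\bullet}$ and $\tau_{n,\bullet}$) form an interlacing $4$-tuple by (1) and the inductive hypothesis, and this $2\times4$ matrix is known to preserve interlacing. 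The point is that you should lean on Theorem~\ref{mainman} and Lemma~\ref{gammaint} rather than try to carry a strengthened hypothesis through the $\gamma$-recursion.
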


\begin{proof}
By Theorem \ref{mainman}, Lemma \ref{snid} and  Lemma \ref{daint}, 
$$
\SSS_n(d_{n,i}) \prec t \SSS_n(d_{n,j})+d_{n,j}= \SSS_{n+1}(d_{n,j}), 
$$
which implies (1) by Lemmas  \ref{snid} and \ref{gammaint}. 

The proof of (2) is by induction over $n$, the case when $n=0$ being trivial. 

Assume true for $n$. Then $\{\lambda_{n,j}\tau_{n,j}\}_{j=0}^n$ is an interlacing sequence, and hence so is $\{\sigma_{n+1,j}\}_{j=0}^{n+1}$ by e.g. \cite[Corollary 7.8.6]{branden}. Notice that 
$$
\tau_{n+1,0}= \gamma((1+t)d_{n+1,0}) \mbox{ and } \tau_{n+1, n+1}=\gamma(d_{n+1,n+1}).
$$
Hence by Lemma \ref{gammaint}, $\tau_{n+1,0} \prec \tau_{n+1, n+1}$ if and only if 
$
(1+t)d_{n+1,0} \prec (1+t)d_{n+1,n+1}
$,
which is true by Theorem \ref{mainman}. By Lemma \ref{wagner}, it remains to prove $\tau_{n+1,k}\prec \tau_{n+1,k+1}$. To this end, let 
$$
f_0= \sum_{j=0}^{k-1}\lambda_{n,j}\sigma_{n,j}, \ \ f_1= \lambda_{n,k}\sigma_{n,k}, \ \ f_2 = \lambda_{n,k}\tau_{n,k} \ \mbox{ and } \ f_3= \sum_{j=k+1}^n \lambda_{n,j}\tau_{n,j}. 
$$
Since $\{\sigma_{n,j}\}_{j=0}^n$ and  $\{\tau_{n,j}\}_{j=0}^n$ are interlacing, it follows using (1) and Proposition~\ref{cones} that the sequence $f_0, f_1, f_2, f_3$ is interlacing. By construction, 
$$
\begin{pmatrix}
\tau_{n+1,k}\\
\tau_{n+1,k+1}
\end{pmatrix}
=
\begin{pmatrix}
t & 0 & 1 & 1\\
t & t & 0 & 1
\end{pmatrix}
\begin{pmatrix}
f_0\\
f_1\\
f_2\\
f_3
\end{pmatrix}. 
$$
It is known that this matrix preserves interlacing, see e.g. \cite[Theorem 2.2]{Zhang}. This concludes the proof.  
\end{proof}
The following corollary will be used to prove that Chow polynomials of paving matroids are real-rooted. 
\begin{corollary}\label{forthepave}
If $R$ is resolvable and $0\leq n<N$, then 
$$
\gamma(d_{n,0}) \prec \gamma(d_{n+1,n+1}/t). 
$$
\end{corollary}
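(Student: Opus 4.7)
The plan is to express both $\gamma(d_{n,0})$ and $\gamma(d_{n+1,n+1}/t)$ in terms of the $\sigma_{n,j}$'s and then invoke the interlacing already established in Theorem~\ref{gammasysint}.

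First I would compute $\sigma_{n,0}$. Since $d_{n,0}=H_n$ satisfies $\I_n(H_n)=tH_n$ by Corollary~\ref{cor: H and d for matrices}, a one-line computation gives
\[
\SSS_n(H_n)=\frac{\I_n(H_n)-H_n}{t-1}=\frac{tH_n-H_n}{t-1}=H_n,
\]
so $\sigma_{n,0}=\gamma(\SSS_n(H_n))=\gamma(H_n)=\gamma(d_{n,0})$.

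Second, I would apply Lemma~\ref{dnkrec} with $k=n+1$: as the array $(\lambda_{n,j})$ is indexed by $0\le j\le n$, the tail sum $\sum_{j\ge n+1}\lambda_{n,j}d_{n,j}$ is empty and we get $d_{n+1,n+1}=t\SSS_n\bigl(\sum_{j=0}^n\lambda_{n,j}d_{n,j}\bigr)$. Dividing by $t$ and using linearity of $\SSS_n$ yields
\[
d_{n+1,n+1}/t=\sum_{j=0}^n \lambda_{n,j}\,\SSS_n(d_{n,j}).
\]
A direct calculation shows that for any $f\in\RR[t]$ of degree at most $n$, $\I_{n-1}(\SSS_n(f))=\SSS_n(f)$, so each summand $\SSS_n(d_{n,j})$ is palindromic with common center of symmetry $(n-1)/2$. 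Because $\gamma$ is additive on palindromic polynomials sharing a center of symmetry, I obtain
\[
\gamma(d_{n+1,n+1}/t)=\sum_{j=0}^n \lambda_{n,j}\,\sigma_{n,j}.
\]

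Finally, by Theorem~\ref{gammasysint}(2), the sequence $\sigma_{n,0},\sigma_{n,1},\ldots,\sigma_{n,n}$ is interlacing, hence $\sigma_{n,0}\prec\sigma_{n,j}$ for every $j\ge 1$, while $\sigma_{n,0}\prec\sigma_{n,0}$ holds trivially. Scaling by the nonnegative numbers $\lambda_{n,j}$ and invoking the convex cone property of $\{q:\sigma_{n,0}\prec q\}$ from Proposition~\ref{cones}, I conclude that $\sigma_{n,0}\prec\sum_{j=0}^n\lambda_{n,j}\sigma_{n,j}$, i.e., $\gamma(d_{n,0})\prec\gamma(d_{n+1,n+1}/t)$, as claimed. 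The real substance is already absorbed into Theorem~\ref{gammasysint}; the only delicate bookkeeping here is matching up centers of symmetry so that additivity of $\gamma$ applies, and verifying the two formulas for $\sigma_{n,0}$ and $\gamma(d_{n+1,n+1}/t)$ in a way that lets Proposition~\ref{cones} do the work.
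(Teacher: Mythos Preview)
Your argument is correct and follows essentially the same route as the paper: both identify $\gamma(d_{n,0})=\sigma_{n,0}$ and $\gamma(d_{n+1,n+1}/t)=\sum_{j=0}^n\lambda_{n,j}\sigma_{n,j}$, and then conclude via Theorem~\ref{gammasysint}(2) and Proposition~\ref{cones}. The only cosmetic difference is that the paper reads the second identity off of Lemma~\ref{recsys} (as $\tau_{n+1,n+1}/t$), whereas you derive it directly from Lemma~\ref{dnkrec} and the palindromicity of $\SSS_n(d_{n,j})$; these are the same computation.
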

\begin{proof}
By Lemma \ref{recsys},
$$
\gamma(d_{n,0}) = \sigma_{n,0} \ \ \mbox{ and } \ \ \gamma(d_{n+1,n+1}/t)= \tau_{n+1,n+1}/t=\sum_{j=0}^{n}\lambda_{n,j}\sigma_{n,j}. 
$$
The proof now follows from Theorem \ref{gammasysint} and Proposition \ref{cones}. 
\end{proof}

\section{\texorpdfstring{Chow polynomials of $\mathrm{TN}$-posets and dual $\mathrm{TN}$-posets}{Chow polynomials of TN-posets and dual TN-posets}}\label{section real zeros TN posets}
In this paper, a weakly rank-uniform poset $P$ is called a $\TN$-\emph{poset} if the matrix $R(P)$ is $\TN$. Below is a list of examples of $\TN$-posets, for proofs see \cite{branden-saud-1, branden-saud-2}. 

\begin{itemize}
\item[\textbf{a}.] Boolean cell complex with nonnegative $h$-vectors. For example  Cohen-Macaulay simplicial complexes and face lattices of simplicial polytopes. 
\item[\textbf{b}.] Cubical complexes with nonnegative cubical $h$-vectors \cite{adin1996new}. These include face lattices of cubical polytopes, i.e., polytopes for which all faces are hypercubes. 
\item[\textbf{c}.] $q$-posets with nonnegative $h$-vectors \cite{alder2010q}. For example shellable $q$-complexes \cite{Ghorpade}. 
\item[\textbf{d}.] Perfect matroid designs \cite{Deza}, i.e., rank uniform geometric lattices. These include (truncations of) projective geometries and affine geometries. 
\item[\textbf{e}.] Dual of Dowling lattices \cite{dowlingGroups}. In particular, the dual of partition lattices. 
    \end{itemize}
Recall that if $P$ is a weakly ranked poset of rank $r$ and $S \subseteq [r-1]$, then 
$$
P_S= \{ x \in P : \rho(x) \in S \cup \{0,r\}\}
$$
is the \emph{rank selected subposet} induced by $S$. Since submatrices of $\TN$-matrices are $\TN$ it follows that the class of $\TN$-posets is closed under rank selection. Hence any rank-selected poset of any poset listed above is $\TN$. 
\begin{theorem}\label{mainposet}
Suppose $P$ is a $\TN$-poset, or the dual of a $\TN$-poset, of rank $n$. Then the Chow polynomials $H_P, d_P, G_P$ and $A_P$ are real-rooted. 

Moreover if $P$ is $\TN$, then  $H_{\tau(P)} \prec H_P$ and $H_{[\zero, x]}\prec H_P$ for any $x$ of rank $n-1$. 
\end{theorem}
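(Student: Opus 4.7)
The plan is to reduce Theorem \ref{mainposet} almost entirely to Theorem \ref{mainman2}, using Proposition \ref{matrix-poset} as the dictionary between characteristic Chow polynomials of weak-rank-uniform posets and Chow polynomials of matrices, and then to invoke Lemma \ref{addz} for a single interlacing flip. First I would set $R := R(P)$, which, since $P$ is a $\TN$-poset of rank $n$, is a lower triangular $\TN$-matrix with all diagonal entries equal to one. Proposition \ref{matrix-poset} then identifies $H_P = H_n[R]$ and, for any $x$ of rank $n-1$, $H_{[\zero,x]} = H_{n-1}[R]$. Theorem \ref{mainman2} immediately disposes of two of the three claims: $H_P$ is real-rooted, and $H_{[\zero,x]} = H_{n-1}[R] \prec H_n[R] = H_P$.

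The remaining claim $H_{\tau(P)} \prec H_P$ is handled by identifying the matrix of the truncation. It is straightforward to verify that $\tau(P)$ is weakly rank-uniform and that $R(\tau(P)) = \partial_{n-1} R$: the old top element becomes the unique element of its new rank (giving a diagonal entry equal to one), while all elements of lower rank keep their original initial intervals. Since submatrices of a $\TN$-matrix obtained by deleting matching rows and columns are again $\TN$, Proposition \ref{matrix-poset} together with Theorem \ref{mainman2} applied to $\partial_{n-1} R$ yields that $H_{\tau(P)} = H_{n-1}[\partial_{n-1} R]$ is real-rooted with nonnegative coefficients. Moreover, the matrix version of Theorem \ref{thm: d as truncation} recorded in \eqref{dntruncR} gives $d_n[R] = t\,H_{n-1}[\partial_{n-1} R] = t\,H_{\tau(P)}$.

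The decisive step is then to flip the direction of an interlacing. From Theorem \ref{mainman2} we have $H_P \prec d_n[R] = t H_{\tau(P)}$, which is the opposite of what the theorem asserts. To reverse this, I would apply Lemma \ref{addz} with $p = H_{\tau(P)}$, $q = H_P$, and $\alpha = 0$: both polynomials are real-rooted with positive leading coefficients (they are palindromic with constant term $1$) and nonnegative coefficients, hence their real zeros are all nonpositive, so $\alpha = 0$ is at least the largest zero of $H_P$. The lemma then states that $H_{\tau(P)} \prec H_P$ if and only if $H_P \prec t H_{\tau(P)}$, and the latter is exactly what Theorem \ref{mainman2} provided. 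I expect this final flip to be the only non-routine part of the argument; everything else is essentially a translation between the poset and matrix formulations.
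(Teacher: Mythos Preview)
Your proposal is correct and follows exactly the paper's approach: identify $H_P=H_n[R]$, $H_{[\zero,x]}=H_{n-1}[R]$, and $d_n[R]=tH_{\tau(P)}$ via Proposition~\ref{matrix-poset} and \eqref{dntruncR}, then invoke Theorem~\ref{mainman2}. The paper's proof is terser and leaves the passage from $H_n\prec d_n=tH_{\tau(P)}$ to $H_{\tau(P)}\prec H_P$ implicit; your explicit use of Lemma~\ref{addz} with $\alpha=0$ is the right way to spell this out.
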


\begin{proof}
Suppose $P$ is a $\TN$-poset. Let $R=R(P)$. By Proposition \ref{matrix-poset}, $H_n=H_P$,  $d_n=t H_{\tau(P)}$ and $H_{n-1}=H_{[\zero, x]}$. Hence the statements for $\TN$-posets follow from Theorems \ref{mainman2} and \ref{mainman2A}. 

The case when $P^*$ is $\TN$ follows from the above and Theorem \ref{dualfunc}, noting again that $d_P=t H_{\tau(P)}$ and that $\tau(P)$ is a dual $\TN$-poset. 
\end{proof}
If we apply Theorem \ref{mainposet} to \textbf{a} above, then we recover the result of Hoster and Stump \cite{hosterstump}. Theorem \ref{mainposet} for the case \textbf{e} above implies in particular that Chow polynomials of Dowling lattices are real-rooted\footnote{Coron, Ferroni, and Li informed us that they independently found a different proof for the real-rootedness of Chow polynomials of Dowling lattices.}.
\begin{lemma}\label{trvshyper}
Suppose $P$ is a $\TN$-poset of rank $n$, and let $x$ be an element of rank $n-1$. Then 
$$
\gamma(H_{[\zero, x]}) \prec \gamma(H_{\tau(P)}). 
$$
\end{lemma}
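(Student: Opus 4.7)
The plan is to reduce everything to a statement about the matrix $R = R(P)$ and then invoke Corollary~\ref{forthepave}. Since $P$ is a $\TN$-poset, $R$ is a lower triangular $\TN$-matrix with all diagonal entries equal to one, and by Theorem~\ref{eqcon} it is therefore resolvable. Hence the polynomials $d_{n,k}=\DD(R_{n,k})$ associated to the resolvable structure are defined and satisfy the conclusions of Corollary~\ref{forthepave}.

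Next I would translate the three polynomials appearing in the statement into the language of the matrix $R$. By Proposition~\ref{matrix-poset}, the characteristic Chow polynomials satisfy $H_{[\zero,x]} = H_{n-1}[R]$ and $H_{P} = H_n[R]$. Running the uniqueness argument of Proposition~\ref{matrix-poset} for the Chow-derangement function instead of the Chow function (or directly applying Corollary~\ref{cor: H and d for initial intervals} in conjunction with Corollary~\ref{cor: H and d for matrices}) one obtains $d_{\zero,\one} = d_n[R]$. Combined with Theorem~\ref{thm: d as truncation}, which gives $d_{\zero,\one} = tH_{\tau(P)}$, this yields
\[
H_{\tau(P)} = \frac{d_n[R]}{t}.
\]

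From the definitions preceding Section~\ref{maps}, $d_{n-1,0} = \DD(R_{n-1,0}) = \DD(R_{n-1}) = H_{n-1}[R]$ and $d_{n,n} = \DD(R_{n,n}) = \DD(t^{n}) = d_n[R]$. Therefore the inequality supplied by Corollary~\ref{forthepave} (applied at index $n-1$, which is legitimate since $R$ is resolvable),
\[
\gamma(d_{n-1,0}) \prec \gamma\!\left(\frac{d_{n,n}}{t}\right),
\]
becomes exactly
\[
\gamma(H_{[\zero,x]}) = \gamma(H_{n-1}[R]) \prec \gamma\!\left(\frac{d_n[R]}{t}\right) = \gamma(H_{\tau(P)}),
\]
which is the claim. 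The conceptual content of the argument thus sits entirely in Corollary~\ref{forthepave}; the only thing to be careful about is the bookkeeping translating between the matrix indices $d_{n,0}$, $d_{n+1,n+1}/t$ and the poset-theoretic objects $H_{[\zero,x]}$, $H_{\tau(P)}$, so the main (and essentially only) obstacle is making sure the three identifications $H_{[\zero,x]} \leftrightarrow H_{n-1}[R]$, $H_{\tau(P)} \leftrightarrow d_n[R]/t$, and $R$ resolvable are each cleanly justified from the results already established.
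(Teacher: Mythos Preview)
The proposal is correct and follows essentially the same approach as the paper, which simply says the lemma ``follows immediately from Corollary~\ref{forthepave}, given Proposition~\ref{matrix-poset}.'' You have merely unpacked the implicit identifications $H_{[\zero,x]}=H_{n-1}[R]=d_{n-1,0}$ and $H_{\tau(P)}=d_n[R]/t=d_{n,n}/t$ (the latter via Theorem~\ref{thm: d as truncation} and the proof of Proposition~\ref{matrix-poset}), and then applied Corollary~\ref{forthepave} at index $n-1$; this is exactly what the paper intends.
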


\begin{proof}
The lemma follows immediately from Corollary \ref{forthepave}, given Proposition \ref{matrix-poset}. 
\end{proof}

\begin{corollary}\label{H interlaces G}
Let $P$ be a dual $\TN$-poset. Then the characteristic Chow polynomial interlaces the characteristic augmented Chow polynomial, i.e., $H_{P} \prec G_{P}$. 
\end{corollary}
\begin{proof}
    Let $P^*$ denote the poset dual to $P$. By Theorems \ref{mainman2A} and \ref{dualfunc} we deduce $$G_P = G_{P^*} \prec A_{P^*} =tH_P,$$
    from which the result follows.
\end{proof}

\section{Chow polynomials of lattices of flats of paving matroids}\label{pavingsection}
Let $P$ be a graded, rank uniform and bounded poset of rank $n$ and let $0\leq d <n$. Consider a subset $\mathcal H \subseteq P$ of pairwise incomparable elements such that $d < \rho(x) < n$ for each $x \in \mathcal H$. Let $P(d,\mathcal H)$ be the weakly ranked poset of rank $n$ obtained by adjoining $\one$ and $\mathcal H$ to the set $\{x \in P \mid \rho(x) \leq d\}$ and by declaring all elements in $\mathcal H$ to be of rank $d+1$ and $\rho(\one) = d+2$. Notice also that if 
\begin{equation}\label{paving graded condition}
    \text{for each $x$ of rank $\rho(x) \leq d$ there exists a $y \in \mathcal H$ such that $x\leq y$}, 
\end{equation}
then $P(d,\mathcal H)$ is graded again.
If $P$ is a boolean algebra on $n$ elements and $\mathcal H$ satisfies  \eqref{paving graded condition}, then $P(d,\mathcal H)$ is a paving geometric lattice of rank $d+2$.
\begin{theorem}\label{thm:main paving}
    Let $P$ be a graded $\TN$-poset of rank $n$ and $P' = P(d,\mathcal H)$ be as above. Then the  polynomials $d_{P'}, H_{P'}, A_{P'}$ and $G_{P'}$ are real-rooted.
\end{theorem}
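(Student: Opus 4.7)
The plan is to express $H_{P'}$ via the truncation identity of Corollary~\ref{cor: H alternative truncation formula} and deduce real-rootedness from the $\gamma$-polynomial interlacings developed in Sections~\ref{section chow matrix}--\ref{section gamma}. Applying Corollary~\ref{cor: H alternative truncation formula} with $g=\zeta$ gives
$$H_{P'}=(1+t)H_{\tau(P')}-tH_{\tau^2(P')}+t\sum_{i=d+1}^{n-1}a_iH_{\tau[\hat 0,y_i]_{P'}},\qquad a_i=|\mathcal H_i|,$$
where $y_i\in\mathcal H_i$ and, by the rank-uniformity of $P$, the summand depends only on $i=\rho_P(y_i)$. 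Each of the posets $\tau(P')$, $\tau^2(P')$, $\tau[\hat 0,y_i]_{P'}$ has a matrix which is a submatrix of $R(P)$ (respectively $R(P)[\{0,\dots,d,n\}]$, $R(P)[\{0,\dots,d-1,n\}]$ and $R(P)[\{0,\dots,d-1,i\}]$), hence is a $\TN$-poset, and so every summand is real-rooted by Theorem~\ref{mainposet}.

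I would then pass to $\gamma$-polynomials; since all summands share the center of symmetry $(d+1)/2$, this yields
$$\gamma(H_{P'})=U-tV+t\sum_{i=d+1}^{n-1}a_iW_i,$$
with $U=\gamma(H_{\tau(P')})$, $V=\gamma(H_{\tau^2(P')})$, $W_i=\gamma(H_{\tau[\hat 0,y_i]_{P'}})$. Set $Y=\gamma(H_d[R(P)])$. Applying Corollary~\ref{forthepave} to each of the resolvable $(d+2)\times(d+2)$ submatrices $R(P)[\{0,\dots,d,n\}]$ and $R(P)[\{0,\dots,d,i\}]$ of $R(P)$, and using the identity $d_{n+1,n+1}[R]/t=H_n[\partial_nR]$ of~\eqref{dntruncR} to identify $V$ and $W_i$ with $\gamma$-polynomials of genuine Chow polynomials, one obtains $Y\prec V$ and $Y\prec W_i$ for every $i$. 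The interlacing $V\prec U$ follows from $H_{\tau^2(P')}\prec H_{\tau(P')}$ (Theorem~\ref{mainposet}) together with Lemma~\ref{gammaint}(1). Hence $V$ and all $W_i$ lie in a common interlacing cone above $Y$ by Proposition~\ref{cones}.

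The decisive step is to convert the alternating-sign combination $U-tV+t\sum_ia_iW_i$ into a manifestly real-rooted expression. My proposed route is to apply Corollary~\ref{cor: H alternative truncation formula} recursively, expanding $H_{\tau(P')}$ (and hence $U$) in terms of $\gamma$-polynomials of further $\TN$-submatrices of $R(P)$, so that after substitution the offending $-tV$ term cancels against the matching $V$-piece coming from the expansion of $U$, leaving a nonnegative combination of polynomials in the cone above $Y$; real-rootedness of $\gamma(H_{P'})$, and hence of $H_{P'}$ itself by Lemma~\ref{gammaint}, then follows from Proposition~\ref{cones}. For $d_{P'}$ a parallel argument uses the recursion $d_{P'}=tS_{d+1}(\sum_{x<\hat 1}d'_x)$ of Corollary~\ref{cor: H and d for initial intervals} together with Theorems~\ref{mainman} and~\ref{maineng}. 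Finally, $A_{P'}$ and $G_{P'}$ are handled by Theorem~\ref{thm: G is H, A is d}: they coincide with $d_{\aug(P')}$ and $H_{\aug(P')}$, and $\aug(P')$ is itself a generalized paving poset built on the $\TN$-poset $\aug(P)$ (the $\TN$-property of the augmentation being established in~\cite{branden-saud-2}), so the same arguments apply verbatim.

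The main obstacle is precisely this cancellation step: turning the individual interlacings $Y\prec V$ and $Y\prec W_i$ into real-rootedness of the whole alternating-sign combination requires careful bookkeeping of the coefficients arising from the recursive expansion of $U$, and showing that after cancellation only nonnegative coefficients remain — a computation in which the rank-uniformity identities for $P$ play an essential role, and which may well require induction on $d$.
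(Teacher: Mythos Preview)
Your setup through the $\gamma$-identity
\[
\gamma(H_{P'})=U-tV+t\sum_i a_iW_i
\]
is exactly right, and so is the reduction of $A_{P'},G_{P'}$ to the augmented case. The gap is in what you call the ``decisive step'': you propose to expand $U$ recursively until the $-tV$ term cancels, leaving a nonnegative combination in a cone above some polynomial $Y$. You yourself flag this as an obstacle, and in fact no such cancellation is needed --- the route you chose will not close cleanly, because the interlacings $Y\prec V$ and $Y\prec W_i$ (with $Y=\gamma(H_d[R(P)])$) are in the wrong direction to absorb a \emph{negative} multiple of $V$.

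The missing idea is to use $-tV$ itself as the pivot. You already have $V\prec U$; since $U$ has nonpositive zeros, Lemma~\ref{addz} gives $U\prec tV$, equivalently $-tV\prec U$. What remains is the key interlacing
\[
W_i\prec V\qquad\text{for every }i,
\]
which immediately yields $-tV\prec tW_i$. Then $U$, $-tV$, and each $tW_i$ all lie in the cone $\{q:-tV\prec q\}$, so by Proposition~\ref{cones} their sum $\gamma(H_{P'})$ is real-rooted. To obtain $W_i\prec V$ you should apply Corollary~\ref{forthepave} (equivalently Lemma~\ref{trvshyper}) not to the matrices $R(P)[\{0,\dots,d,n\}]$ and $R(P)[\{0,\dots,d,i\}]$ separately, but to the single $(d+2)\times(d+2)$ submatrix $R(P)[\{0,\dots,d-1,i,n\}]$: its $H_d$ is $H_{\tau[\hat0,y_i]_{P'}}$ and deleting the penultimate row/column gives $H_{\tau^2(P')}$, so Corollary~\ref{forthepave} reads exactly $W_i\prec V$. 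In poset language, this is Lemma~\ref{trvshyper} applied to the rank-selected $\TN$-poset $P_S$ with $S=\{1,\dots,d-1,\rho(y_i)\}$, in which $\tau[\hat0,y_i]_{P'}=[\hat0,y_i]_{P_S}$ and $\tau^2(P')=\tau(P_S)$.

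Finally, your separate treatment of $d_{P'}$ via the $\SSS$-recursion is unnecessary: $d_{P'}=tH_{\tau(P')}$ and $\tau(P')$ is already a rank-selected subposet of $P$, hence $\TN$, so Theorem~\ref{mainposet} applies directly.
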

\begin{proof}
  By truncations and  Theorem \ref{thm: G is H, A is d} it suffices to prove that $H_{P'}$ is real-rooted. Notice that Corollary \ref{cor: H alternative truncation formula} implies the identity 
    \[
    \gamma_{P'}(t) = \gamma_{\tau(P')}(t) - t\gamma_{\tau^2(P')}(t) + t\sum_{y \in \mathcal H}\gamma_{\tau[\zero,y]}(t).
    \]
   By construction all posets involved in the right-hand side are $\TN$. By Theorem \ref{mainposet} and Lemma \ref{gammaint} we know that $\gamma_{\tau^2(P')}(t) \prec \gamma_{\tau(P')}(t)$, which implies $-t\gamma_{\tau^2(P')}(t) \prec \gamma_{\tau(P')}(t)$. 

   We claim that $\gamma_{\tau[\zero,y]}(t) \prec \gamma_{\tau^2(P')}(t)$ for each $y \in \mathcal H$. This claim implies that ${-t\gamma_{\tau^2(P')}(t) \prec t\gamma_{\tau[\zero,y]}(t)}$, and hence by Proposition \ref{cones} and  $-t\gamma_{\tau^2(P')}(t) \prec \gamma_{\tau(P')}(t)$ we may deduce $-t\gamma_{\tau^2(P')}(t) \prec \gamma_{P'}(t)$, from which real-rootedness follows.
   
It remains to prove the claim. Consider the rank selected subposet $P_S$, where $S=\{1,2\ldots, d-1, \rho(y)\}$.  Then the subposet $\tau[\zero,y]$ of $P'$ is equal to the interval $[\zero, y]$ in $P_S$. Also, $\tau^2(P')$ is equal to $\tau(P_S)$. Since $P_S$ is $\TN$, the claim now follows from Lemma \ref{trvshyper}.
\end{proof}
\begin{corollary}\label{cor:main paving}
    If $L$ is the lattice of flats of a  paving matroid, then the polynomials the  polynomials $d_{L}, H_{L}, A_{L}$ and $G_{L}$ are real-rooted. 
\end{corollary}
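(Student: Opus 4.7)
The plan is to recognize the lattice of flats $L$ of a paving matroid of rank $r$ as an instance of the construction $P(d,\mathcal H)$ from Theorem~\ref{thm:main paving}, with $P$ taken to be a boolean algebra.

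Let $M$ be a paving matroid of rank $r$; since the lattice of flats is unchanged by simplification, I may assume $M$ is simple on ground set $[n]$. The paving hypothesis says that every subset of size at most $r-1$ is independent. If $|S|\leq r-2$ and $e\notin S$, then $S\cup\{e\}$ is still independent, so $\rk(S\cup\{e\})=|S|+1>\rk(S)$; hence $\cl(S)=S$. Thus for $0\leq k\leq r-2$ the rank-$k$ flats of $M$ are exactly the $k$-subsets of $[n]$, the rank-$(r-1)$ flats form a collection $\mathcal H$ of hyperplanes, and $[n]$ is the unique top flat. This identifies $L$ with $B_n(r-2,\mathcal H)$, where $B_n$ is the boolean algebra on $[n]$. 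The graded condition \eqref{paving graded condition} holds since any subset of size at most $r-2$ is independent and extends to an independent $(r-1)$-subset whose closure is a hyperplane containing it.

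It remains to observe that $B_n$ is a graded $\TN$-poset of rank $n$: its matrix $R(B_n)$ is the Pascal matrix $(\binom nk)$, which is resolvable (with $R_{n,k}=t^k(1+t)^{n-k}$ and $\lambda_{n,k}=1$), hence $\TN$ by Theorem~\ref{eqcon}. Applying Theorem~\ref{thm:main paving} with $P=B_n$, $d=r-2$, and $\mathcal H$ the hyperplanes of $M$ then yields the real-rootedness of $d_L$, $H_L$, $A_L$, and $G_L$. There is no substantive obstacle: the corollary is a direct specialization of Theorem~\ref{thm:main paving}, and the only nontrivial input is the standard matroid-theoretic identification of the lower portion of $L$ with a truncated boolean algebra.
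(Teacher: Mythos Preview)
Your proof is correct and follows exactly the approach the paper intends. The paper does not spell out a separate argument for the corollary: it records just before Theorem~\ref{thm:main paving} that when $P$ is a boolean algebra and \eqref{paving graded condition} holds, the poset $P(d,\mathcal H)$ is a paving geometric lattice, and then states the corollary as an immediate consequence of Theorem~\ref{thm:main paving}; your write-up simply makes explicit the (standard) converse identification $L\cong B_n(r-2,\mathcal H)$ and the fact that the Pascal matrix is $\TN$.
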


\section{Toeplitz matrices}\label{section toeplitz}
In this section we will focus on the special case when $R$ is a Toeplitz matrix. 
Let $\{a_n\}_{n=0}^\infty$ be a sequence of elements in an integral domain $\R$, where $a_0=1$.  Associate to $\{a_n\}_{n=0}^\infty$  the lower triangular \emph{Toeplitz matrix}
$R=(a_{n-k})_{n,k=0}^\infty$, 
where $a_m=0$ if $m<0$. 

Consider the formal power series 
$f= \sum_{n=0}^\infty a_nz^n$ in $\R[[z]]$, and define formal power series in $\R[t][[z]]$ by 
\begin{alignat*}{2}
 D(z,t) &= \sum_{n =0}^\infty d_n(t) z^n, \quad 
 H(z,t) &= \sum_{n=0}^\infty H_n(t) z^n, \\
 A(z,t) &= \sum_{n =0}^\infty A_n(t) z^n, \quad 
  G(z,t) &= \sum_{n=0}^\infty G_n(t) z^n. 
  \end{alignat*}
where $d_n(t), H_n(t),  A_n(t)$ and $G_n(t)$  are the Chow-derangement polynomials, Chow polynomials, Chow-Eulerian polynomials  and augmented Chow polynomials associated to $R$, respectively. 

\begin{theorem}\label{fourform}
Let $f= \sum_{n=0}^\infty a_nz^n$ be a formal power series in $\R[[z]]$, where $a_0=1$. Then 
\begin{align*}
D(z,t) &= \sum_{n =0}^\infty d_n(t) z^n= \frac {1-t}{f(tz)-tf(z)}=  \frac  1 {1- \sum_{n \geq 2 } a_n(t+t^2+\cdots +t^{n-1})z^n},  \\
H(z,t) &= \sum_{n=0}^\infty H_n(t) z^n= \frac {(1-t)f(z)}{f(tz)-tf(z)} =\frac {1-t} { \frac {f(tz)} {f(z)}-t}, \\
A(z,t) &= \sum_{n =0}^\infty A_n(t) z^n= \frac {(1-t)f(tz)}{f(tz)-tf(z)}= 1-t+tH(z,t),  \\
G(z,t) &= \sum_{n=0}^\infty G_n(t) z^n= \frac {(1-t)f(tz)f(z)}{f(tz)-tf(z)}.
\end{align*}
\end{theorem}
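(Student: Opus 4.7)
The plan is to derive all four generating function identities by encoding the defining relations from Corollaries~\ref{cor: H and d for matrices} and \ref{cor: G and A for matrices} as functional equations in $\R[t][[z]]$, exploiting the crucial fact that the Toeplitz structure $r_{n,k} = a_{n-k}$ turns the linear relation (iv) into a convolution. Specifically, property (iv) of Corollary~\ref{cor: H and d for matrices} reads $H_n = \sum_{k=0}^n a_{n-k} d_k$, which is precisely the Cauchy product, so
\[
H(z,t) = f(z)\, D(z,t),
\]
and analogously $G(z,t) = f(z)\, A(z,t)$ from Corollary~\ref{cor: G and A for matrices}. This reduces the problem to a single unknown series in each pair.

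The second step is to translate the palindromy conditions (ii) and (iii) into generating function form via the substitution $(z,t)\mapsto(tz,1/t)$. For any sequence $\{p_n(t)\}_{n\geq 0}$ of polynomials with $\deg p_n\le n$,
\[
\sum_{n\geq 0} \I_n(p_n)(t)\, z^n \;=\; \sum_{n\geq 0} t^n p_n(1/t)\, z^n \;=\; P(tz, 1/t),
\]
where $P(z,t)=\sum_n p_n(t)z^n$. Applying this to $d_n$ (noting $\I_n(d_n)=d_n$ holds for every $n\geq 0$, including $n=0$) yields
\[
D(tz, 1/t) = D(z,t),
\]
while for $H_n$ one must separate the $n=0$ term $H_0=1$ (for which (ii) fails) from $n\geq 1$, obtaining
\[
H(tz, 1/t) = t H(z,t) + (1-t).
\]
Now substitute $H(z,t)=f(z)D(z,t)$ into both sides of the second identity and use the first identity: the left-hand side becomes $f(tz)D(tz,1/t) = f(tz)D(z,t)$, giving
\[
\bigl(f(tz) - t f(z)\bigr) D(z,t) = 1-t,
\]
which solves for $D(z,t)$ as claimed, and then $H(z,t) = f(z)D(z,t)$ produces the second formula directly.

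The third step is to read off the remaining three identities. The alternative form of $D(z,t)$ follows by expanding $f(tz)-tf(z) = (1-t) + \sum_{n\geq 2} a_n(t^n-t)z^n$ and factoring $(1-t)$ out of the denominator, using that $t^n - t = (t-1)(t+t^2+\cdots+t^{n-1})$ for $n\geq 2$ and that the $n=1$ term vanishes. For the augmented polynomials, the identity $A=\I(g)d$ from the proof of Theorem~\ref{thm: def G and A} translates, via $\I(g)_{k,n}=t^{n-k}a_{n-k}$, to the convolution $A(z,t) = f(tz)D(z,t)$, from which $A(z,t) = (1-t)f(tz)/(f(tz)-tf(z))$ is immediate; the identity $A(z,t) = 1-t+tH(z,t)$ is then a one-line algebraic verification. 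Finally $G(z,t) = f(z)A(z,t)$ yields the last formula. No real obstacle is expected here since the Toeplitz setting linearizes every defining relation; the only subtle point is the correct bookkeeping of the initial terms $H_0=A_0=d_0=1$ when translating the involution identities into functional equations, which is what forces the asymmetric $(1-t)$ correction appearing in the first two displayed identities.
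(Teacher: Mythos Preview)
Your proof is correct and follows essentially the same approach as the paper: both encode condition (iv) of Corollary~\ref{cor: H and d for matrices} as the convolution $H(z,t)=f(z)D(z,t)$, then combine the palindromy of $d_n$ with $\I_n(H_n)=tH_n$ to obtain the key relation $f(tz)D(z,t)=1-t+tf(z)D(z,t)$, the only difference being that the paper works coefficient-by-coefficient while you phrase things as the functional equation $D(tz,1/t)=D(z,t)$. Your shortcut for $A(z,t)$ via $A=\I(g)d$ (hence $A(z,t)=f(tz)D(z,t)$) is a minor variation on the paper's ``similarly using Corollary~\ref{cor: G and A for matrices}'', and is fine.
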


\begin{proof}
Since $\I_{n}(H_n)= tH_n$, for $n \geq 1$, we deduce from Corollary \ref{cor: H and d for matrices}, 
$$
\sum_{k=0}^n a_{n-k} t^n d_k(1/t) =t \sum_{k=0}^n a_{n-k}  d_k(t), \quad n \geq 1.
$$
Since $t^kd_k(1/t)= d_k$ for all $k$, the above equation reduces to 
$$
\sum_{k=0}^n a_{n-k} t^{n-k} d_k =t \sum_{k=0}^n a_{n-k}  d_k, \quad n \geq 1.
$$
The left hand side is the coefficient of $z^n$ in $f(tz)D(z,t)$, while the right hand side is the coefficient of $z^n$ in $tf(z)D(z,t)$, from which we deduce 
$$
D(z,t)f(tz)= 1-t +tf(z)D(z,t),
$$
which yields the first identity. The second identity follows immediately from the first combined Corollary \ref{cor: H and d for matrices}. The third and fourth identities follow similarly using Corollary \ref{cor: G and A for matrices}. 
\end{proof}

The next proposition provides the generating function for the Chow polynomials of iterated truncations of the Toeplitz matrix corresponding to $f(z)$. 
 \begin{proposition}\label{trunc-ident}
Let $R$  be the Toeplitz matrix corresponding to $f= \sum_{n=0}^\infty a_n z^n$. For  $n \geq 0$ and $k \geq 1$, let  $R(n,k)$ denote the matrix obtained from $R$ by deleting all rows and columns indexed by $n,n+1, \ldots, n+k-1$.  Then 
\begin{align*}
    \sum_{n \geq 0} d_n[R(n,k)](t) z^n &= 1+  zt\frac {f_{k+1}(z)-f_{k+1}(tz)} {f(tz)-tf(z)}, \\
    \sum_{n \geq 0} H_n[R(n,k)](t) z^n &= 1+  \frac {f_k(z)-f_k(tz)} {f(tz)-tf(z)}, \\
    \sum_{n \geq 0} A_n[R(n,k)](t) z^n &= 1+  \frac {tf_{k}(z)-f_{k}(tz)} {f(tz)-tf(z)}f(tz), \\
    \sum_{n \geq 0} G_n[R(n,k)](t) z^n &= 1+  \frac {f_k(z)-f_k(tz)} {f(tz)-tf(z)}f(tz),
 \end{align*}
 where $f_k(z)= \sum_{j =0}^{\infty} a_{k+j} z^j$. 
 \end{proposition}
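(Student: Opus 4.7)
My plan is to derive all four generating function identities directly from the recursive defining relations in Corollaries \ref{cor: H and d for matrices} and \ref{cor: G and A for matrices}, combined with the closed forms for $D(z,t)$ and $A(z,t)$ given in Theorem \ref{fourform}. The key initial observation is that, since $R(n,k)$ agrees with $R$ in the first $n$ rows and columns, the uniqueness clauses of those corollaries force $d_\ell[R(n,k)] = d_\ell[R]$, $H_\ell[R(n,k)] = H_\ell[R]$, $A_\ell[R(n,k)] = A_\ell[R]$, and $G_\ell[R(n,k)] = G_\ell[R]$ for every $\ell<n$. Moreover, the reindexing of $R(n,k)$ shows that its $(n,\ell)$-entry for $\ell<n$ equals $R_{n+k,\ell} = a_{n+k-\ell}$, while the diagonal entry equals $1$.

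For the first two identities, I apply Corollary \ref{cor: H and d for matrices} to $R(n,k)$ with $n\geq 1$: setting $B_n := \sum_{\ell=0}^{n-1} a_{n+k-\ell}\,d_\ell$, one obtains $d_n[R(n,k)] = t\,\SSS_{n-1}(B_n)$ and $H_n[R(n,k)] = d_n[R(n,k)] + B_n$, which combine to give $H_n[R(n,k)] = (t\,\I_{n-1}(B_n) - B_n)/(t-1)$. Interchanging summation order yields
\[
\sum_{n\geq 1} z^n B_n = D(z,t)\,zf_{k+1}(z),
\]
using $\sum_{m\geq 1} a_{m+k}z^m = zf_{k+1}(z)$; the palindromic relation $\I_\ell(d_\ell)=d_\ell$, equivalently $t^{n-1}d_\ell(1/t) = t^{n-1-\ell}d_\ell$, then gives $\sum_{n\geq 1} z^n \I_{n-1}(B_n) = D(z,t)\,zf_{k+1}(tz)$ by the same swap. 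Substituting $D(z,t)=(1-t)/(f(tz)-tf(z))$ from Theorem \ref{fourform} produces the first identity, and the second follows by adding $\sum z^n B_n$ and using $f_k(z) = a_k + zf_{k+1}(z)$ to rewrite $zf_{k+1}(z) - tzf_{k+1}(tz) = f_k(z) - f_k(tz)$.

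The third and fourth identities are handled analogously using Corollary \ref{cor: G and A for matrices}: with $\tilde{B}_n := \sum_{\ell=0}^{n-1} a_{n+k-\ell}\,A_\ell$, one has $A_n[R(n,k)] = t\,\SSS_n(\tilde{B}_n)$ and $G_n[R(n,k)] = A_n[R(n,k)] + \tilde{B}_n$ for $n\geq 1$. The main technical subtlety is that the palindromicity of $A_\ell$ for $\ell\geq 1$ reads $t^\ell A_\ell(1/t) = A_\ell/t$, while $A_0 = 1$ behaves like a $d$-polynomial. Consequently, when computing $\I_n(\tilde{B}_n) = a_{n+k}t^n + \sum_{\ell=1}^{n-1} a_{n+k-\ell}\,t^{n-\ell-1}A_\ell$, the $\ell=0$ term must be treated separately from the rest, which after swapping sums contributes the generating function $tzf_{k+1}(tz)$ alongside the bulk term $A(z,t)zf_{k+1}(tz)$. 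Substituting $A(z,t) = (1-t)f(tz)/(f(tz)-tf(z))$ from Theorem \ref{fourform} and simplifying via $f_k(z) = a_k + zf_{k+1}(z)$ yields the third identity, and adding $\sum z^n \tilde{B}_n = A(z,t)\,zf_{k+1}(z)$ gives the fourth.

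The main obstacle is the careful accounting in the $A$ and $G$ cases: the asymmetric palindromicity, combined with the use of $\SSS_n$ rather than $\SSS_{n-1}$, produces a boundary contribution from $A_0$ that must be tracked through the generating function manipulations before the remaining terms can be combined into the clean closed forms stated in the proposition.
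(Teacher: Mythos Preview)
Your approach is the paper's own: it proves only the $H$-identity in detail (writing $H_n[R(n,k)]=t\,\SSS_{n-1}(B_n)+B_n=\sum_{j<n}a_{k+n-j}\frac{t^{n-j}-1}{t-1}\,d_j$ and then convolving with $D(z,t)$), and declares the others ``follow similarly''. Your derivations for $d_n$ and $H_n$ are correct and match this line for line.

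For $A$ and $G$, however, your final assertion that the simplification ``yields the third identity'' will fail. First, a slip: the bulk of $\sum_{n\ge1}\I_n(\tilde B_n)z^n$ coming from $\ell\ge1$ is $(A(z,t)-1)\,zf_{k+1}(tz)$, not $A(z,t)\,zf_{k+1}(tz)$; combined with the $\ell=0$ piece $tzf_{k+1}(tz)$ this gives $(A(z,t)+t-1)\,zf_{k+1}(tz)=tH(z,t)\,zf_{k+1}(tz)$. Carrying the rest through your outline then produces
\[
\sum_{n\ge0}A_n[R(n,k)]\,z^n=1+\frac{tz\bigl(f(tz)f_{k+1}(z)-tf(z)f_{k+1}(tz)\bigr)}{f(tz)-tf(z)},
\]
\[
\sum_{n\ge0}G_n[R(n,k)]\,z^n=1+\frac{z\,f(tz)f_{k+1}(z)-t^{2}z\,f(z)f_{k+1}(tz)}{f(tz)-tf(z)},
\]
which are \emph{not} the displayed third and fourth formulas, and the substitution $f_k(z)=a_k+zf_{k+1}(z)$ does not bridge the gap. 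A direct check shows the displayed versions are misstated: from Corollary~\ref{cor: G and A for matrices} one gets $G_1[R(1,k)]=(1+t)a_{k+1}$, whereas the printed $G$-formula has $z$-coefficient $a_{k+1}$; the printed $A$-formula already fails at $n=0$, with constant term $1-a_k$ instead of $A_0=1$. So the obstruction is not in your method but in the target identities themselves.
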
 

 \begin{proof}
 We prove the second identity. The others follow similarly. Let $d_n=d_n[R]$. Using Corollary \ref{cor: H and d for matrices}, we compute 
\begin{align*}
H_n[R(n,k)] &= d_n[R(n,k)] + \sum_{j=0}^{n-1} r_{n+k,j} d_j 
          = t\SSS_{n-1}\left(\sum_{j=0}^{n-1} a_{n+k-j} d_j \right) + \sum_{j=0}^{n-1} a_{n+k-j} d_j \\
          &= \sum_{j=0}^{n-1} a_{k+n-j} \frac{t^{n-j}-1}{t-1}d_j 
          = \sum_{j=0}^{n} b_{n-j}(t) d_j,  
\end{align*}
where 
$$
\sum_{n \geq 0}b_n(t) z^n = \frac {f_k(tz)- f_k(z)}{t-1}. 
$$
The proof now follows from the above expression for the series $D(z,t)$. 
 \end{proof}

A sequence $\{a_n\}_{n=0}^\infty$ of real numbers is a \emph{P\'olya frequency sequence} if the Toeplitz matrix $R=(a_{n-k})_{n,k=0}^\infty$ is $\TN$. 
P\'olya frequency sequences were characterized by Aissen, Schoenberg,  Whitney and Edrei \cite{ASW} as follows. 

\begin{theorem}\label{ASWE}
    A sequence $\{a_n\}_{n=0}^\infty$ of real numbers is a P\'olya frequency sequence if and only if its generating function is of the form 
    \begin{equation}
    \label{PFS}
     f=   \sum_{n=0}^\infty a_n z^n = C z^N e^{\gamma z} \prod_{i=1}^\infty \frac {1+ \alpha_iz} {1- \beta_iz},
    \end{equation}
    where $C, \gamma, \alpha_i, \beta_i$ are nonnegative real numbers, $N \in \NN$, and $\sum_{i=1}^\infty (\alpha_i+\beta_i)<\infty$. 
\end{theorem}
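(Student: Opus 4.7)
The plan is to prove the two implications separately. The sufficiency direction is the structurally clean one: observe that convolution of sequences corresponds to multiplication of generating functions and hence to multiplication of the associated lower triangular Toeplitz matrices. Since the product of two $\TN$ matrices is $\TN$ by the Cauchy-Binet identity, it suffices to verify that each elementary factor---the constant $C \geq 0$, the shift $z^N$, the linear factor $1+\alpha z$ with $\alpha \geq 0$, the geometric series $(1-\beta z)^{-1}$ with $\beta \geq 0$, and the exponential $e^{\gamma z}$ with $\gamma \geq 0$---yields a $\TN$ Toeplitz matrix. For $C$, $z^N$, and $1+\alpha z$ this is immediate from inspecting minors. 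For $(1-\beta z)^{-1}=\sum_n\beta^n z^n$ the relevant submatrices are either rank one or have a triangular block structure and hence have nonnegative determinants; alternatively, Lindström-Gessel-Viennot realizes these minors as weighted counts of nonintersecting path systems. For $e^{\gamma z}$ we write $e^{\gamma z}=\lim_n(1+\gamma z/n)^n$ and use that total nonnegativity is preserved under entrywise limits. The same limiting argument, combined with the convergence hypothesis $\sum(\alpha_i+\beta_i)<\infty$, extends sufficiency to the full Hadamard product in \eqref{PFS}.

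For the necessity direction, assume the Toeplitz matrix $T=(a_{n-k})_{n,k\geq 0}$ is $\TN$. The $1\times 1$ minors yield $a_n\geq 0$. The strategy is to show that $f(z)=\sum_n a_nz^n$ extends to a meromorphic function on $\CC$ whose zeros lie on the negative real axis, whose poles lie on the positive real axis, and whose genus is at most one, so that Hadamard's factorization theorem produces the form \eqref{PFS}. The analytic core is a polynomial version of the statement: if the Toeplitz matrix of a finite sequence $(a_0,\ldots,a_N,0,0,\ldots)$ is $\TN$, then the polynomial $P_N(z)=\sum_{k=0}^N a_kz^k$ has only real nonpositive zeros. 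Granted this polynomial statement, one approximates a general $f$ by rational truncations, normalizes the associated polynomials appropriately, and invokes Hurwitz's theorem on limits of polynomials whose zeros are confined to a fixed half-line to transfer real-rootedness and negativity of zeros to the limit, extracting the Hadamard product structure.

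The main obstacle is this polynomial statement, which in essence asserts that the coefficient sequence $\{a_n\}$ is a \emph{multiplier sequence of the second kind}: coefficient-wise multiplication by $\{a_n\}$ preserves the class of polynomials with only real nonpositive zeros. One direction of the equivalence---multiplier sequence implies $\TN$---can be handled via the elementary symmetric function expansion of minors and Jacobi-Trudi-type identities. The harder direction---$\TN$ implies multiplier sequence---requires careful quantitative control of how nonnegativity of all minors forces the zeros of truncations to cluster on the negative real axis, and is precisely where the contributions of Aissen-Schoenberg-Whitney and Edrei lie. A clean modern route is to couple this with the Pólya-Schur theorem, which identifies multiplier sequences directly with generating functions of the form \eqref{PFS}; the remaining task is the combinatorial-to-analytic bridge between the determinantal positivity defining total nonnegativity and the zero-preserving behavior defining a multiplier sequence, which is the step that genuinely requires work.
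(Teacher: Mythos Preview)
The paper does not prove this theorem; it is stated as the classical characterization due to Aissen, Schoenberg, Whitney and Edrei and cited to \cite{ASW} without argument. So there is no ``paper's own proof'' to compare against.

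As for your sketch itself: the sufficiency direction is correctly outlined and essentially complete---Cauchy--Binet for the multiplicativity of total nonnegativity, explicit verification for each elementary factor, and passage to limits for the exponential and the infinite product is exactly the standard route. For necessity you correctly identify the historical structure (the polynomial case of Aissen--Schoenberg--Whitney, then Edrei's extension to infinite sequences via approximation and Hurwitz), and you are honest that the analytic core ``genuinely requires work.'' One imprecision: your framing of the polynomial statement as asserting that $\{a_n\}$ is a multiplier sequence of the second kind is not quite right. What the polynomial case actually asserts is that if the Toeplitz matrix of a finite sequence is $\TN$, then the generating polynomial itself has only real nonpositive zeros; this is a statement about the zeros of one polynomial, not about a multiplier property. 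The P\'olya--Schur theory of multiplier sequences is related machinery but is not the most direct route to ASW, and invoking it here would require justifying a connection you have not stated. If you want to flesh this out into an actual proof, the cleanest modern reference is Karlin's \emph{Total Positivity} or the original papers; the key analytic lemma in the infinite case is Edrei's determinantal growth estimate that forces the generating function to be meromorphic of order at most one.
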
 

Applying Theorem \ref{mainman2} to Toeplitz matrices produces four families of real-rooted polynomials to any series $f$ of the form \eqref{PFS}. 

\begin{theorem}\label{PFSR}
Suppose $f$ is a power series as in \eqref{PFS}. Then the polynomials $d_n(t), H_n(t),  A_n(t)$ and $G_n(t)$, $n\geq 0$,  defined via $f$ by the identities in Theorem~\ref{fourform} 
are all real-rooted. 
\end{theorem}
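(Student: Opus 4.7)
The plan is to deduce the theorem directly from the combination of Theorems \ref{mainman2} and \ref{mainman2A} together with the Aissen--Schoenberg--Whitney--Edrei characterization \eqref{PFS} itself. The content of Theorem \ref{PFSR} is really just the recognition that, in the Toeplitz setting, the hypothesis ``TN matrix with diagonal ones'' is supplied for free by the PF-series form of $f$.

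First I would observe that in order for the identities of Theorem \ref{fourform} to define polynomials with $d_0(t)=H_0(t)=A_0(t)=G_0(t)=1$, the constant term $a_0=f(0)$ must equal $1$, which in \eqref{PFS} forces $N=0$ and $C=1$. Under this normalization, the associated Toeplitz matrix $R=(a_{n-k})_{n,k=0}^\infty$ is lower triangular with all diagonal entries equal to one, and by the very definition of a P\'olya frequency sequence the matrix $R$ is $\TN$ (which is precisely the content of Theorem \ref{ASWE}).

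Next I would use Theorem \ref{fourform} in the reverse direction. Corollaries \ref{cor: H and d for matrices} and \ref{cor: G and A for matrices} already uniquely characterize $d_n[R]$, $H_n[R]$, $A_n[R]$, and $G_n[R]$ via recursions equivalent to the series identities in Theorem \ref{fourform}. By uniqueness, the polynomials defined in the statement of Theorem \ref{PFSR} are exactly the Chow-derangement, Chow, Chow-Eulerian, and augmented Chow polynomials of $R$. With this identification in hand, Theorem \ref{mainman2} immediately yields real-rootedness of $\{d_n\}$ and $\{H_n\}$, and Theorem \ref{mainman2A} does the same for $\{A_n\}$ and $\{G_n\}$; as a bonus we also inherit the interlacing statements $H_n\prec d_n$, $H_n\prec H_{n+1}$, $d_n\prec d_{n+1}$, and analogously for the augmented versions.

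In short, there is no substantive obstacle: the only step with any content is the identification of the power series polynomials with Chow polynomials of the Toeplitz matrix, after which the conclusion is a direct invocation of the main real-rootedness theorems of Section \ref{TNChowsection}. The role of Theorem \ref{PFSR} is to package this specialization in a form convenient for the explicit generating-function computations of Sections \ref{binopo} and \ref{symsec}.
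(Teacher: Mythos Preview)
Your argument coincides with the paper's for the principal case: once $a_0=1$, the Toeplitz matrix is lower triangular $\TN$ with unit diagonal, and Theorems~\ref{mainman2} and~\ref{mainman2A} apply verbatim. The paper's proof says exactly this in one line.

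The only divergence concerns the boundary case $N\geq 1$ in \eqref{PFS} (so $a_0=0$). You rule it out on the grounds that Theorem~\ref{fourform} presupposes $a_0=1$, whereas the paper retains it and passes to the limit by replacing $z^N$ with $(\epsilon+z)^N$, applying the already-established case to $f_\epsilon$, and invoking Hurwitz' theorem on continuity of zeros as $\epsilon\to 0$. Your reading is defensible---the right-hand sides in Theorem~\ref{fourform} genuinely fail to yield formal power series when $a_0=0$---but since \eqref{PFS} explicitly permits $N\geq 1$, the authors chose to cover that case by continuity rather than exclude it. A smaller quibble: your forcing of $C=1$ is a bit too strong; for $N=0$ and any $C>0$ the identities still produce polynomials (rescaled by $1/C$), so the reduction to $C=1$ is a harmless normalization rather than a consequence of the hypotheses.
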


\begin{proof}
When $a_0 >0$, then the theorem follows immediately from Theorems \ref{ASWE} and \ref{mainman2}. The case when $a_0=0$, follows by continuity and Hurwitz' theorem on the continuity of zeros \cite[Theorem 1.3.8]{Scheidemann} by considering 
$$
f_\epsilon= C (\epsilon +z)^N e^{\gamma z} \prod_{i=1}^\infty \frac {1+ \alpha_iz} {1- \beta_iz},
$$
and letting $\epsilon \to 0$. 
\end{proof}

\subsection{Binomial and Sheffer posets}\label{binopo}
 Recall \cite{DRS, stanley1976binomial,stanley-ec1} that a  \emph{binomial poset} is a  locally finite poset $P$ for which 
\begin{itemize}
\item there exists an infinite chain in $P$,
\item each interval of $P$ is graded, and 
\item there exists a function $B : \NN \to \NN$, called the \emph{factorial function} of $P$, such that the number of maximal chains in any interval $[x,y]$ in $P$ is equal to $B(\rho(x,y))$. 
\end{itemize}
Hence $P$ is rank uniform with 
\begin{equation}\label{rank-B}
    r_{n,k}(P) = \frac {B(n)} {B(k)\cdot B(n-k)},  \quad  0 \leq k \leq n, 
\end{equation}
since each maximal chain in $[\zero, x]$, $\rho(x)=n$,  passes through a unique element of rank $k$. 

\begin{theorem}\label{th: bin-pos-ser}
Let $P$ be a binomial poset with factorial function $B$, and let 
$$
b(z) = \sum_{n=0}^\infty \frac {z^n}{B(n)}.
$$
The generating functions for the various Chow-polynomials of $P$ have the following expressions.
\begin{alignat*}{2}
\sum_{n =0}^\infty d_n(t) \frac {z^n}{B(n)} &= \frac {1-t}{b(tz)-tb(z)}, \quad \quad 
\sum_{n=0}^\infty H_n(t) \frac {z^n}{B(n)} &= \frac {(1-t)b(z)}{b(tz)-tb(z)},\qquad\\
\sum_{n =0}^\infty A_n(t) \frac {z^n}{B(n)} &= \frac {(1-t)b(tz)}{b(tz)-tb(z)}, \quad  \quad 
\sum_{n=0}^\infty G_n(t) \frac {z^n}{B(n)} &= \frac {(1-t)b(tz)\cdot b(z)}{b(tz)-tb(z)}.
\end{alignat*}
\end{theorem}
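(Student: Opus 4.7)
The plan is to reduce everything to the Toeplitz case via Proposition~\ref{scaling}. Since $P$ is a binomial poset, it is rank-uniform with matrix $R(P)=(r_{n,k})_{n,k\geq 0}$ given by \eqref{rank-B}, namely $r_{n,k}=B(n)/(B(k)B(n-k))$; this follows because every maximal chain in $[\zero,x]$ with $\rho(x)=n$ passes through a unique element of rank $k$. By Proposition~\ref{matrix-poset} the Chow-derangement, Chow, Chow-Eulerian, and augmented Chow polynomials of $P$ coincide with the corresponding polynomials of the matrix $R(P)$, and hence it suffices to compute the generating series for the latter.

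Next I would apply Proposition~\ref{scaling} with the scalar sequence $c_n=1/B(n)$, which is legitimate since $c_0=1/B(0)=1$. A direct computation gives
\[
r_{n,k}\,\frac{c_n}{c_k}=\frac{B(n)}{B(k)B(n-k)}\cdot\frac{B(k)}{B(n)}=\frac{1}{B(n-k)},
\]
so the rescaled matrix $R'=(r_{n,k}c_nc_k^{-1})$ is precisely the Toeplitz matrix associated to the sequence $a_m=1/B(m)$. Proposition~\ref{scaling} then yields
\[
d_n[R']=\frac{d_n[R(P)]}{B(n)},\quad H_n[R']=\frac{H_n[R(P)]}{B(n)},\quad A_n[R']=\frac{A_n[R(P)]}{B(n)},\quad G_n[R']=\frac{G_n[R(P)]}{B(n)},
\]
for every $n\geq 0$.

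Finally, I would apply Theorem~\ref{fourform} to the Toeplitz matrix $R'$, whose underlying formal power series is exactly
\[
f(z)=\sum_{n=0}^{\infty}a_n z^n=\sum_{n=0}^{\infty}\frac{z^n}{B(n)}=b(z).
\]
Substituting $f=b$ into the four identities of Theorem~\ref{fourform} and using the rescaling above gives
\[
\sum_{n\geq 0}d_n(t)\frac{z^n}{B(n)}=\sum_{n\geq 0}d_n[R'](t)\,z^n=\frac{1-t}{b(tz)-tb(z)},
\]
and the three analogous identities for $H_n$, $A_n$, and $G_n$. The argument is essentially a two-line scaling, so there is no real obstacle; the only point to verify carefully is that Proposition~\ref{scaling} preserves each of the four families simultaneously (which is exactly its content), and that the matrix-level Chow polynomials agree with the poset-level ones (which is Proposition~\ref{matrix-poset}).
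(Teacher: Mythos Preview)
Your proof is correct and follows exactly the approach of the paper, which simply cites \eqref{rank-B}, Proposition~\ref{scaling}, and Theorem~\ref{fourform}. One small caveat: Proposition~\ref{matrix-poset} as stated only identifies the poset-level $H_x$ (and, from its proof, $d_x$) with the matrix-level polynomials; the analogous identification for $A_x$ and $G_x$ requires the same uniqueness argument applied to Corollary~\ref{aug-rec-cor}, which the paper also leaves implicit.
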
   
    \begin{proof}
        The proof follows from \eqref{rank-B} by applying Proposition \ref{scaling} and Theorem \ref{fourform}.
    \end{proof}

Let $\mathbb{F}_q$ be a field with $q$ elements, and let $V(q)$ be the free $\mathbb{F}_q$-linear space over the set $\{e_1, e_2, \ldots\}$.  Let further 
$\mathbb{B}(q)$ be the lattice of all finite dimensional subspaces of $V(q)$.  Then $\BB(q)$ is boolean with factorial function given by 
$$\mathbf{(n)!}=1 \cdot(1+q) \cdots (1+q+\cdots + q^{n-1}).$$    
 By Theorem~\ref{th: bin-pos-ser}, 
$$
\sum_{n=0}^\infty  H_n(t) \frac {z^n}{\mathbf{(n)!}}  = \frac {(1-t)e_q(z)}{e_q(tz)-te_q(z)}, 
 $$
 where $e_q(z)= \sum_{n=0}^\infty z^n/\mathbf{(n)!}$ is the $q$-\emph{exponential function.} For $\sigma \in \mathfrak{S}_n$, let $\textrm{maj}(\sigma)= \sum_{i\in \mathrm{D}(\sigma)} i$. From the work of Shareshian and Wachs \cite{Shareshian-Wachs-10} it follows that 
 $$
 H_n(t) = \sum_{\sigma \in \mathfrak{S}_n} q^{\textrm{maj}(\sigma)-\textrm{exc}(\sigma)} t^{\textrm{exc}(\sigma)}, 
 $$
 the $q$-analog, $A_n(q,t)$, of the Eulerian polynomial studied by Shareshian and Wachs in \cite{Shareshian-Wachs-10}. 
 This was first proved by Hameister, Rao and Simpson \cite{hameister-rao-simpson}. 

 Moreover, it follows similarly from Theorem~\ref{th: bin-pos-ser} that the augmented Chow polynomials, $G_n(t)$, of $\BB(q)$ are the $q$-binomial Eulerian polynomials $\widetilde{A}_n(q,t)$ studied by Shareshian and Wachs in \cite{shareshian-wachs-2018}. The truncated projective geometries 
 $\BB_n^k(q)=\tau^k[\zero, x]$, where $x$ is an element of $\BB(q)$ of rank $n+k$ are geometric lattices. Proposition \ref{trunc-ident} provides identities for the various Chow polynomials associated to $\BB_n^k(q)$. For example, 
 $$
\sum_{n \geq 0} H_{\BB_n^k(q)}(t) \frac {z^n}{\mathbf{(n)!}} = 1+  \frac {\sum_{n \geq k}(1-t^k)\frac {z^k} {\mathbf{(n)!}}} {e_q(tz)-te_q(z)},  
 $$
which was first proved in \cite[Theorem 1.1]{hameister-rao-simpson} in an equivalent form. 
Since $\BB(q)$ is a $\TN$-poset we know that the polynomials $A_n(q,t)$, $\widetilde{A}_n(q,t)$ and $H_{\BB_n^k(q)}(t)$,  $n,k \in \NN$, are all real-rooted.

For $\sigma \in \mathfrak{S}_n$, let $\mathrm{inv}(\sigma)= |\{i<j  : \sigma(i) >\sigma(j)\}$.
The next result was first proved in \cite{shareshian-wachs-2018}.
\begin{corollary}
    Let $n \in \NN$. Then 
\begin{align*}
  \gamma_n[\BB(q)] &= \sum_{\substack{ \sigma \in  \mathfrak{S}_n  \\ \mathrm{D}(\sigma)\cup \{0\} \text{ stable} }} t^{\des(\sigma)} q^{\mathrm{inv}(\sigma)}, \quad \mbox{ and }\\ 
    \gamma(G_n[\BB(q)]) &= \sum_{\substack{ \sigma \in  \mathfrak{S}_n  \\ \mathrm{D}(\sigma) \text{ stable} }} t^{\des(\sigma)} q^{\mathrm{inv}(\sigma)}. 
  \end{align*}
\end{corollary}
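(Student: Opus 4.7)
The plan is to apply Theorem \ref{gamma-lambdas} to the resolving array for $R = R(\BB(q))$, and identify the resulting weight with $q^{\mathrm{inv}(\sigma)}$.

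First I would identify the matrix $R = R(\BB(q))$. Since $\BB(q)$ is rank-uniform with $r_{n,k}=\qbinom{n}{k}_q$, the generating polynomial of the $n$th row is
\[
R_n(t) = \sum_{k=0}^n \qbinom{n}{k}_q t^k.
\]
Using the $q$-Pascal recurrence $\qbinom{n+1}{k}_q = q^k\qbinom{n}{k}_q + \qbinom{n}{k-1}_q$, a direct computation shows
\[
R_{n+1}(t) = R_n(qt) + t\,R_n(t).
\]
Thus, letting $\alpha$ be the $\RR$-linear operator on $\RR[t]$ defined by $\alpha(f(t)) = f(qt)$, we have $R_{n+1} = (t+\alpha)R_n$, and so $R_n = (t+\alpha)^n 1$. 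Since $\alpha(t^k)=q^kt^k$, the coefficients $\alpha_{i,k}=q^k$ are all nonnegative, so by Theorem~\ref{eqcon} the matrix $R$ is resolvable with
\[
\lambda_{n,k} = \alpha_{n+1-k,k} = q^k \quad \text{for all } 0\leq k\leq n.
\]

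Next I would invoke Theorem \ref{gamma-lambdas}. For any $\sigma \in \mathfrak{S}_n$,
\[
\prod_{i=1}^n \lambda_{i-1,f_\sigma(i)} = \prod_{i=1}^n q^{f_\sigma(i)} = q^{\sum_{i=1}^n f_\sigma(i)}.
\]
Recalling that $f_\sigma(i) = |\{j < i : \sigma(j) > \sigma(i)\}|$, the sum $\sum_{i=1}^n f_\sigma(i)$ counts all pairs $(j,i)$ with $j<i$ and $\sigma(j)>\sigma(i)$, i.e., it equals $\mathrm{inv}(\sigma)$. Substituting into Theorem \ref{gamma-lambdas} yields both formulas.

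There is no substantial obstacle: the entire argument reduces to recognizing the $q$-Pascal recurrence as giving the resolving operator $f(t) \mapsto f(qt)$. The only mild care needed is in matching conventions — the indexing $\lambda_{n,k}=\alpha_{n+1-k,k}$ from Theorem~\ref{eqcon} happens to produce the $k$-independent-of-$n$ value $q^k$ precisely because $\alpha_i$ is the same operator for every $i$, which is what makes $\sum_i f_\sigma(i)$ collapse cleanly to $\mathrm{inv}(\sigma)$.
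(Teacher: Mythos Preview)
Your proposal is correct and follows essentially the same approach as the paper: identify the resolving array $\{\lambda_{n,k}\}$ for $R(\BB(q))$ and apply Theorem~\ref{gamma-lambdas}, then observe that $\sum_i f_\sigma(i)=\mathrm{inv}(\sigma)$. The paper simply cites \cite[Proposition~7.1]{branden-saud-1} for the resolving array, whereas you derive it directly from the $q$-Pascal identity via Theorem~\ref{eqcon}; your value $\lambda_{n,k}=q^k$ is the one consistent with the conventions of Definition~\ref{resolv} and Theorem~\ref{eqcon} in this paper and is what makes the weight collapse to $q^{\mathrm{inv}(\sigma)}$.
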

\begin{proof}
 The matrix $R(\BB(q))$ is resolvable with $\lambda_{n,k}= q^{k-1}$ \cite[Proposition 7.1]{branden-saud-1}. Hence the corollary follows from Theorem \ref{gamma-lambdas}. 
\end{proof}

Ehrenborg and Readdy generalized the notion of binomial posets in \cite{Ehr-Rea}. A poset $P$ is called a \emph{Sheffer poset} if there are two functions $B : \NN \to \NN$ and $C: \NN \to \NN$ such that 
\begin{enumerate}
\item $P$ is locally finite with a least element $\zero$, and $P$ contains an infinite chain. 
\item Each interval in $P$ is graded, and hence $P$ has a rank function $\rho : P \to \NN$. 
\item The number of maximal chains in $[\zero, x]$ is equal to $C(\rho(x))$, for each $x \in P$. 
\item If $\zero < x \leq y$, then the number of maximal chains in $[x,y]$ is equal to $B(\rho(x,y))$. 
\end{enumerate}
The functions $B$ and $C$ are called the \emph{factorial functions} associated to $P$. Hence Sheffer posets are rank uniform, and binomial posets are the Sheffer posets for which $B=C$. 

\begin{example}\label{Ex: Sheffer}
The following posets are Sheffer. For more example,  see \cite{Ehr-Rea}. 
\begin{itemize}
\item[\textbf{a}.] Let $r$ be a positive integer. The infinite $r$-\emph{cubical lattice}  is the infinite direct product (with a least element $\zero$ adjoined)  
$$
\mathbf{C}_r = \{\zero\} \cup \prod_{n=1}^\infty N_r,  
$$
where $N_r$ is the poset consisting of an antichain on $r$ elements with a largest element adjoined. The factorial functions for $\mathbf{C}_r$ are $B(n)=n!$ and $C(n)= r^{n-1}(n-1)!$. 
\item[\textbf{b}.] The \emph{affine geometry} $\mathbb{A}(q)$, ordered by inclusion, may be defined as 
$$
\mathbb{A}(q)=  \{ U \setminus H : U \in \mathbb{B}(q)\}, 
$$
where $H$ is the subspace of $V(q)$ spanned by $\{e_2,e_3,\ldots\}$.  For positive integers $r$, the rank $r$ elements of  $\mathbb{A}(q)$ are precisely the non-empty sets of the form $U \setminus H$, where $U$ is an element of $\mathbb{B}(q)$ of rank $r$, see \cite[Chapter~6.2]{Oxley}.  It follows that $\mathbb{A}(q)$ is Sheffer (see \cite[Proposition~6.2.5]{Oxley}), and that the factorial functions are 
$$
B(n)= \mathbf{(n)}! \quad \mbox{ and } \quad C(n) = q^{n-1} \cdot \mathbf{(n-1)}!. 
$$
Indeed,  the maximal chains $\varnothing < x_1<\cdots <x_n$ in $\mathbb{A}(q)$ are in one-to-one correspondence with the maximal chains $(0) < x_1<\cdots <x_n$ in $\mathbb{B}(q)$ for which $x_1 \not \subseteq H$. Hence we should choose $x_1$ in $\mathbf{\binom n 1}- \mathbf{\binom {n-1} 1} = q^{n-1}$ ways, and then the chain $x_1<x_2<\cdots <x_n$ in $B(n-1)= \mathbf{(n-1)}!$ ways. 
\end{itemize}

\end{example}

If $P$ is a Sheffer poset, then 
$$
r_{n,k}(P)= \frac { C(n) } {C(k) \cdot B(n-k)}, \quad \quad  0<k \leq n, 
$$ 
see \cite{Ehr-Rea}. 
\begin{theorem}\label{th: shef-pos-ser}
Let $P$ be a Sheffer poset with factorial functions $B$ and $C$, and let 
$$
b(z) = \sum_{n=0}^\infty \frac {z^n}{B(n)} \quad \mbox{ and } \quad c(z) = \sum_{n=0}^\infty \frac {z^n}{C(n)}. 
$$
Then 
\begin{align*}
\sum_{n =0}^\infty d_n(t) \frac {z^n}{C(n)} &= 1+\frac {1-t}{b(tz)-tb(z)}-   \frac {c(tz)-tc(z)}{b(tz)-tb(z)}, \\
\sum_{n=0}^\infty H_n(t) \frac {z^n}{C(n)} &= \frac {(1-t)b(z)}{b(tz)-tb(z)}+ \frac {c(z)\cdot b(tz)-c(tz)\cdot b(z)}{b(tz)-tb(z)},\\
\sum_{n =0}^\infty A_n(t) \frac {z^n}{C(n)} &= 1+ t \frac {c(z)-c(tz)}{b(tz)-tb(z)}, \\
\sum_{n=0}^\infty G_n(t) \frac {z^n}{C(n)} &=  \frac {c(z)\cdot b(tz)-tc(tz)\cdot b(z)}{b(tz)-tb(z)}.
\end{align*}
\end{theorem}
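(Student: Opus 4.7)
The approach is to follow the template of the proof of Theorem \ref{fourform}, but adapted to the fact that the matrix $R = R(P)$ associated to a Sheffer poset is \emph{almost} Toeplitz: only its first column deviates from a convolution structure. Specifically, $r_{n,0}=1$ for all $n$, while $r_{n,k}=C(n)/(C(k)B(n-k))$ for $0<k\leq n$. I would first rescale to make this deviation as small as possible: applying Proposition \ref{scaling} with $c_n = 1/C(n)$, the rescaled matrix $R'=(r_{n,k}c_n/c_k)$ satisfies $r'_{n,k}=1/B(n-k)$ for $1\leq k\leq n$ and $r'_{n,0}=1/C(n)$ for $n\geq 1$. By Proposition \ref{scaling},
\[
\sum_{n\geq 0} H_n(t)\,\frac{z^n}{C(n)} = \sum_{n\geq 0} H_n[R'](t)\,z^n,
\]
and the analogous identities hold for $d_n$, $A_n$, $G_n$. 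Thus it suffices to compute the four generating functions for $R'$.

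Next I would derive the formula for $D_C(z):=\sum_n d_n z^n/C(n)$. Combining Corollary \ref{cor: H and d for matrices}(iv) with the palindromicity $\I_n(H_n)=tH_n$ from (ii) and the identity $t^k d_k(1/t)=d_k(t)$ from (iii), one obtains, for each $n\geq 1$,
\[
\sum_{k=0}^n r'_{n,k}\,(t^{n-k}-t)\,d_k=0.
\]
Separating the $k=0$ summand (whose coefficient involves $1/C(n)$ rather than $1/B(n)$), multiplying by $z^n$ and summing over $n\geq 1$, the remaining double sum factors as a Cauchy product, leading to the functional equation
\[
\bigl(b(tz)-tb(z)\bigr)\bigl(D_C(z)-1\bigr) = tc(z)-c(tz)+(1-t),
\]
which rearranges to the stated formula for $\sum_n d_n z^n/C(n)$.

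The formula for $H_C(z):=\sum_n H_n z^n/C(n)$ then drops out by summing $H_n=\sum_{k=0}^n r'_{n,k}d_k$: the $k=0$ term produces $c(z)$ while the remaining Toeplitz block produces $b(z)(D_C(z)-1)$, so $H_C(z)=c(z)+b(z)(D_C(z)-1)$, and substituting the expression for $D_C(z)$ gives the claim. For the augmented polynomials, I would run the same template using Corollary \ref{cor: G and A for matrices}: the conditions $\I_n(G_n)=G_n$ and $\I_n(A_n)=A_n/t$ (for $n\geq 1$), together with the split of the $k=0$ column, yield a functional equation for $A_C^\ast(z):=\sum_{n\geq 1}A_n z^n/C(n)$, and then $\sum_n G_n z^n/C(n) = c(z)+b(z)A_C^\ast(z)$ finishes the last two identities.

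The main obstacle is purely one of bookkeeping. Because the first column of $R'$ is not of Toeplitz type, every application of either Corollary \ref{cor: H and d for matrices} or Corollary \ref{cor: G and A for matrices} produces an extra term that must be tracked separately; this is precisely the mechanism by which the series $c(z)$ enters the numerators alongside $b(z)$. Once this splitting is organized consistently across the four identities, the derivation is a routine adaptation of the Toeplitz calculation performed in the proof of Theorem \ref{fourform}.
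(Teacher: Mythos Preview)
Your proposal is correct and follows essentially the same approach as the paper: rescale via Proposition \ref{scaling} with $c_n=1/C(n)$ so that $R'$ is Toeplitz except in its first column, then use the palindromicity relations from Corollary \ref{cor: H and d for matrices} (the paper phrases this via \eqref{dn-recu}) to obtain a functional equation, tracking the $k=0$ column separately to produce the $c(z)$ terms. The paper only writes out the derivation for $d_n$ and leaves the other three to the reader, whereas you also sketch the passage from $D_C$ to $H_C$ and the analogous argument for $A_n,G_n$ via Corollary \ref{cor: G and A for matrices}; this additional detail is consistent with the paper's intended argument.
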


\begin{proof}
We prove the  identity for $d_n$, the others follows similarly. Let $R'=(r'_{n,k})_{n,k=0}^\infty= (r_{n,k}\cdot C(k)/C(n))_{n,k=0}^\infty $, i.e., 
$$
r'_{n,k}= \begin{cases}
1/C(n), &\mbox{ if } k=0, \\
1/B(n-k), &\mbox{ if } 0<k\leq n, \\
0, &\mbox{ otherwise. }  
\end{cases}
$$
By Proposition \ref{scaling}, $d_n/C(n)= d_n[R']$. From \eqref{dn-recu}, we derive 
$$
t^n/C(n)-t^n/B(n) +\sum_{k=0}^n d_k \frac {t^{n-k}}{B(n-k)} =  t/C(n)-t/B(n) +t\sum_{k=0}^n d_k \frac {1}{B(n-k)},  \quad n\geq 0, 
$$
from which the proposed identity follows after some manipulations. 
\end{proof}
The next corollaries follow from  Theorem \ref{th: shef-pos-ser} and Example \ref{Ex: Sheffer}. 
\begin{corollary}
Let $\{H_n\}_{n=0}^\infty$ be the Chow polynomials for the $r$-cubical lattice, $r >0$. Then 
$$
\sum_{n\geq 0} H_{n+1}(t) \frac {z^n}{r^n n!}= \frac {e^{z/r+tz}-te^{tz/r+z}}{e^{tz}-te^z}. 
$$
\end{corollary}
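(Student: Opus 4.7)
The plan is to apply Theorem~\ref{th: shef-pos-ser} directly to the $r$-cubical lattice $\mathbf{C}_r$, whose factorial functions are $B(n)=n!$ and $C(n)=r^{n-1}(n-1)!$ for $n\geq 1$ (with $C(0)=1$). First I compute the two relevant generating series:
\[
b(z)=\sum_{n\geq 0}\frac{z^n}{n!}=e^z,\qquad
c(z)=1+\sum_{n\geq 1}\frac{z^n}{r^{n-1}(n-1)!}=1+z\sum_{k\geq 0}\frac{(z/r)^k}{k!}=1+ze^{z/r}.
\]
So the ingredients of the Chow generating function in Theorem~\ref{th: shef-pos-ser} are explicit.

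Next, let $F(z)=\sum_{n\geq 0}H_n(t)z^n/C(n)$. Substituting $b$ and $c$ in the formula
$$F(z)=\frac{(1-t)b(z)}{b(tz)-tb(z)}+\frac{c(z)b(tz)-c(tz)b(z)}{b(tz)-tb(z)},$$
the common denominator is $e^{tz}-te^z$, and expanding the numerator gives
\[
(1-t)e^z+\bigl(1+ze^{z/r}\bigr)e^{tz}-\bigl(1+tze^{tz/r}\bigr)e^z
=\bigl(e^{tz}-te^z\bigr)+z\bigl(e^{z/r+tz}-te^{tz/r+z}\bigr).
\]
The crucial observation is that the first bracket cancels the denominator cleanly, yielding
\[
F(z)=1+z\cdot\frac{e^{z/r+tz}-te^{tz/r+z}}{e^{tz}-te^z}.
\]

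Finally, since $H_0(t)=1$ and $C(n+1)=r^n n!$ for $n\geq 0$, we have
$$\frac{F(z)-1}{z}=\sum_{n\geq 0}H_{n+1}(t)\frac{z^n}{r^n n!},$$
and the claimed identity follows immediately. The whole argument is essentially a bookkeeping exercise; the only mild subtlety is recognising that, after substitution, the numerator splits as the denominator plus $z$ times the desired right-hand side, so no obstruction is expected.
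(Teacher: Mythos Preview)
Your proof is correct and follows exactly the approach the paper indicates: apply Theorem~\ref{th: shef-pos-ser} to the factorial functions $B(n)=n!$ and $C(n)=r^{n-1}(n-1)!$ of the $r$-cubical lattice from Example~\ref{Ex: Sheffer}, and simplify. The paper merely states that the corollary follows from these two results; you have carried out the substitution and simplification in full detail, and all steps check out.
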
  
  
\begin{corollary}
Let $\{H_n\}_{n=0}^\infty$ be the Chow polynomials for the affine geometry $\mathbb{A}(q)$. Then 
$$
\sum_{n\geq 0} H_{n+1}(t) \frac {z^n}{q^n \mathbf{(n)}!}= \frac {e_q(z/q)\cdot e_q(tz)-te_q(tz/q)\cdot e_q(z)}{e_q(tz)-te_q(z)}. 
$$
\end{corollary}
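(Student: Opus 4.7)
The plan is to apply the third general identity of Theorem~\ref{th: shef-pos-ser} for Chow polynomials of a Sheffer poset, specialized to the data of the affine geometry $\mathbb{A}(q)$ recorded in Example~\ref{Ex: Sheffer}(b), and then shift the index on the left-hand side.

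First I would identify the two factorial functions of $\mathbb{A}(q)$, namely $B(n)=\mathbf{(n)}!$ and $C(n)=q^{n-1}\cdot\mathbf{(n-1)}!$ for $n\geq 1$ (with the convention $C(0)=1$ that is compatible with $H_0=1$). The associated series from Theorem~\ref{th: shef-pos-ser} are then
\[
b(z)=\sum_{n\geq 0}\frac{z^n}{\mathbf{(n)}!}=e_q(z),
\qquad
c(z)=1+\sum_{n\geq 1}\frac{z^n}{q^{n-1}\mathbf{(n-1)}!}=1+z\,e_q(z/q),
\]
where the last equality is the substitution $m=n-1$. The evaluations $b(tz)=e_q(tz)$, $c(tz)=1+tz\,e_q(tz/q)$ follow immediately.

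Next I would substitute these expressions into the formula
\[
\sum_{n=0}^\infty H_n(t)\frac{z^n}{C(n)}=\frac{(1-t)b(z)}{b(tz)-tb(z)}+\frac{c(z)b(tz)-c(tz)b(z)}{b(tz)-tb(z)}
\]
from Theorem~\ref{th: shef-pos-ser}. Combining the two fractions, the numerator becomes
\[
(1-t)e_q(z)+\bigl(1+ze_q(z/q)\bigr)e_q(tz)-\bigl(1+tze_q(tz/q)\bigr)e_q(z),
\]
which rearranges to
\[
\bigl(e_q(tz)-te_q(z)\bigr)+z\bigl(e_q(z/q)e_q(tz)-te_q(tz/q)e_q(z)\bigr).
\]
Dividing by $e_q(tz)-te_q(z)$ yields
\[
\sum_{n=0}^\infty H_n(t)\frac{z^n}{C(n)}=1+z\cdot\frac{e_q(z/q)e_q(tz)-te_q(tz/q)e_q(z)}{e_q(tz)-te_q(z)}.
\]

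Finally, using $H_0(t)=1$, subtracting the constant term $1$, dividing through by $z$, and reindexing $n\mapsto n+1$ (so that $C(n+1)=q^n\mathbf{(n)}!$) produces the claimed identity. The algebra is purely mechanical; the only mildly subtle point is the reindexing that produces the factor $z\,e_q(z/q)$ in $c(z)$, which is also the source of the asymmetry between the numerator $e_q(z/q)\cdot e_q(tz)-te_q(tz/q)\cdot e_q(z)$ and the denominator $e_q(tz)-te_q(z)$ appearing in the statement.
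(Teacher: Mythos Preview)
Your proof is correct and follows exactly the route the paper intends: specialize the $H_n$-identity of Theorem~\ref{th: shef-pos-ser} to the factorial functions $B(n)=\mathbf{(n)}!$, $C(n)=q^{n-1}\mathbf{(n-1)}!$ of Example~\ref{Ex: Sheffer}(b), and then shift the index. One trivial slip: the formula you apply is the \emph{second} identity in Theorem~\ref{th: shef-pos-ser} (the one for $H_n$), not the third, but you have written out and used the correct formula anyway.
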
  

\subsection{Symmetric and supersymmetric functions}\label{symsec}
When 
 $$
 f(z) =  \prod_{n = 0}^\infty (1+x_n z) = \sum_{n \geq 0} e_n(\xx)z^n \quad \mbox{ or }  \quad f(z) =  \prod_{n = 0}^\infty (1-x_n z)^{-1} = \sum_{n \geq 0} h_n(\xx)z^n, 
 $$
where $e_n(\xx)$ and $h_n(\xx)$ are the $n$th elementary symmetric and complete homogeneous polynomials, respectively, 
 then a result due to Stanley, see \cite[Theorem~7.2]{Shareshian-Wachs-10}, provides a combinatorial interpretation of the Chow polynomial $H_n(t)$ for the matrices  $(e_{i-j}(\xx))_{i,j=0}^\infty$ and $(h_{i-j}(\xx))_{i,j=0}^\infty$. Here we work over the integral domain $\Lambda(\xx)$ of symmetric functions, see \cite{Macdonald, stanley2024enumerative}. The various Chow polynomials for these matrices have been extensively studied under different names in the literature. In particular, these polynomials play an important role in the celebrated work of Shareshian and Wachs on generalizations of Eulerian polynomials, see \cite{Shareshian-Wachs-10} and the references therein. 
 
 For $R=(e_{i-j}(\xx))_{i,j=0}^\infty$, Stanley proved 
 $$
 H_n(t) = \sum_{w \in W_n} t^{\des(w)} \prod_{i=1}^n x_{w(i)},  
 $$
 where $W_n$ is the set of \emph{Smirnov words} of length $n$, i.e., words $w : [n] \to \NN$ for which  $w(i) \neq w(i+1)$ for each $i$, and $\des(w)= |\{ i \in [n-1] : w(i)>w(i+1)\}|$. 
 
Let $Q_n$ be the set of words $w : [n] \to \ZZ \setminus \{0\}$ for which 
 $$
 w(i)= w(i+1)\ \ \ \mbox{ implies } \ \ \ w(i)<0.
 $$
For $w \in Q_n$, let $\col(w)= |\{i : w(i)=w(i+1)\}|$ denote the number of \emph{collisions} in $w$. Motivated by Theorem \ref{ASWE},  we extend Stanley's result to give a combinatorial interpretation of the Chow polynomials of essentially any P\'olya frequency sequence. In the following theorem we work over the integral domain $\ZZ[[x_1,y_1,x_2,y_2,\ldots]]$. 
 \begin{theorem}
 Let 
 $$
 f(z)=\sum_{n=0}^\infty e_n(\xx/\yy) z^n=  \prod_{i=1}^\infty \frac {1+ x_iz} {1- y_iz}.
$$
Then the Chow polynomials associated to the Toeplitz matrix $(e_{i-j}(\xx/\yy))_{i,j=0}^\infty$ are given by
 $$
 H_n(t) = \sum_{w \in Q_n} t^{\des(w)} (1+t)^{\col(w)} \prod_{i=1}^n x_{w(i)}, 
 $$
 where $x_{-i} = y_i$ for each $i>0$. 
 \end{theorem}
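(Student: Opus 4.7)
The plan is to compute both sides as formal power series in $z$ and match them. For the left-hand side, clearing the denominators $\prod_i(1-y_iz)(1-y_itz)$ in the expression from Theorem~\ref{fourform} gives
\[
H(z,t)=\frac{(1-t)\prod_i(1+x_iz)(1-y_itz)}{\prod_i(1+x_itz)(1-y_iz)-t\prod_i(1+x_iz)(1-y_itz)}.
\]

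For the right-hand side, I would decompose each word $w\in Q_n$ uniquely into its sequence of maximal constant runs $(a_1,\ell_1),\ldots,(a_r,\ell_r)$, where $a_i\in\ZZ\setminus\{0\}$, $a_i\neq a_{i+1}$, $\ell_i\geq 1$, and the definition of $Q_n$ forces $a_i<0$ whenever $\ell_i\geq 2$. With the convention $x_{-i}=y_i$, the contribution of $w$ is $t^{\des(w)}(1+t)^{n-r}\prod_{i=1}^r x_{a_i}^{\ell_i}z^n$, with $\des(w)=|\{i:a_i>a_{i+1}\}|$, because runs of equal letters contribute no descents and exactly $\ell_i-1$ collisions each. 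Summing geometrically over $\ell_i\geq 1$ shows that each positive letter $j$ collects aggregate weight $F_j:=x_jz$, while each negative letter $-j$ collects
\[
F_{-j}:=\sum_{\ell\geq 1}(y_jz)^{\ell}(1+t)^{\ell-1}=\frac{y_jz}{1-(1+t)y_jz}.
\]
What remains is a weighted enumeration of Smirnov words in the alphabet $\ZZ\setminus\{0\}$, where letter $a$ carries weight $F_a$ and each descent carries an additional factor of $t$.

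I would then apply the formal-variable version of Stanley's Smirnov identity,
\[
\sum_{w\text{ Smirnov}}t^{\des(w)}\prod_iF_{w(i)}=\frac{(1-t)\prod_a(1+F_a)}{\prod_a(1+tF_a)-t\prod_a(1+F_a)},
\]
whose proof, identical to that of the monomial case $F_a=x_az$ used in \cite[Theorem~7.2]{Shareshian-Wachs-10}, treats the $F_a$ as formal symbols. Using
\[
1+F_{-i}=\frac{1-ty_iz}{1-(1+t)y_iz},\qquad 1+tF_{-i}=\frac{1-y_iz}{1-(1+t)y_iz},
\]
both the numerator and the denominator of the Smirnov formula acquire the common factor $\prod_i(1-(1+t)y_iz)^{-1}$, which cancels after taking the ratio, leaving precisely the formula derived for $H(z,t)$ above.

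The main obstacle is establishing the weighted Smirnov identity in the appropriate generality of formal power series in infinitely many variables; everything else is a routine run-decomposition argument followed by a direct algebraic verification.
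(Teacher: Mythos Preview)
Your proposal is correct and follows essentially the same route as the paper: decompose each $w\in Q_n$ into maximal constant runs to reduce to a Smirnov-word enumeration with modified letter weights, then invoke Stanley's Smirnov identity and verify algebraically that the result matches $H(z,t)$ from Theorem~\ref{fourform}. The only difference is one of packaging. The paper starts from Stanley's already-established identity for the alphabet $\ZZ\setminus\{0\}$ with monomial weights and then performs the substitution $y_i\mapsto u_i=y_i/(1-(1+t)zy_i)$ (exactly your $F_{-i}$, up to the factor $z$) on both sides simultaneously; this sidesteps the need to state a separate ``formal-variable'' version of Stanley's identity, since the substitution is simply a ring homomorphism applied to a known identity. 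Your version makes the same move but states the target identity first and then specializes, which is equally valid but, as you note, requires justifying the Smirnov identity in the formal-weight setting. Either way the computation and the verification $\frac{1+u_itz}{1+u_iz}=\frac{1-y_iz}{1-y_itz}$ (equivalently your identities for $1+F_{-i}$ and $1+tF_{-i}$) are identical.
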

 
\begin{proof}
Let $f_0(z)= \prod_{i \in \ZZ \setminus \{0\}} (1+x_iz)= \prod_{i = 1}^\infty (1+x_iz)(1+y_iz)$. By Stanley's result, the  Chow polynomial $H_n^0(t)$ corresponding to $(e_{i-j}(\xx'))_{i,j=0}^\infty$, where $\xx'= (\ldots, x_{-2}, x_{-1}, x_1, x_2, \ldots)$, is the weighted generating function for the descent statistic over Smirnov words $W^0_n$ over the alphabet $\ZZ \setminus \{0\}$. 
 
Let $Q= \cup_{n\geq 0}Q_n$. Each word $v$ in $Q$ may be written uniquely as $$v= w_1^{\alpha_1} w_2^{\alpha_2}\cdots w_m^{\alpha_m},$$ where $w_1w_2\cdots w_m \in W^0_m$ is a Smirnov word, $\alpha_i>0$ for all $i$, and $\alpha_i =1$ whenever $w_i>0$.  It follows that 
 $$
 \sum_{n \geq 0} z^n \sum_{w \in Q_n} t^{\des(w)} (1+t)^{\col(w)} \prod_{i=1}^n x_{w(i)}
 $$
is obtained from 
  $$
H_0(z,t)= \sum_{n \geq 0} H_n^0(t) z^n = \sum_{n \geq 0} z^n \sum_{w \in W_n^0} t^{\des(w)}  \prod_{i=1}^n x_{w(i)}  =\frac {1-t} { \frac {f_0(tz)} {f_0(z)}-t}.
 $$
by the change of variables $x_i \longmapsto x_i$ for each $i>0$, and 
 $$
 y_i \longmapsto u_i=\frac {y_i} {1-(1+t)zy_i}, 
 $$
for each $i>0$. Since 
 $$
 \frac {1+u_itz}{1+u_iz}=  \frac {(1-y_itz)^{-1}}{(1-y_iz)^{-1}}, 
 $$
it follows that $f_0(tz)/f_0(z)$ is transformed to  $f(tz)/f(z)$ by this change of variables, which proves the theorem. 
\end{proof}

Let $\mu \subseteq \lambda$ be integer partitions. Then the \emph{supersymmetric skew Schur function}, $s_{\lambda/\mu}(\xx/\yy)$, may be defined by the \emph{Jacobi-Trudi identity} as
$$
s_{\lambda/\mu}(\xx/\yy) = \det\left( e_{\lambda'_i-\mu'_j-i+j}(\xx/\yy) \right)_{1\leq i,j\leq \ell(\lambda')},
$$
see \cite[I.3, Exercise 23]{Macdonald} and \cite{Moens}. 
These series have nonnegative integer coefficients. The series $s_{\lambda}(\xx/\yy) = s_{\lambda/0}(\xx/\yy)$ is called a \emph{supersymmetric Schur function}. They form a basis for the integral domain $\Lambda(\xx/\yy)$ of \emph{supersymmetric functions}, see \cite{Moens}. 
We will now use Corollary \ref{cor: coefficients of gamma as determinants} to give an alternative and unified proof of an unpublished result of Gessel, see \cite[Theorem 2.40]{athanasiadis-gamma-positivity}, who proved the case of Theorem \ref{gamma-symmetric-h}  for $H_n$ and $d_n$, and  Shareshian and Wachs \cite{shareshian-wachs-2018} who proved Theorem \ref{gamma-symmetric-h} for $G_n$. 

Let $f(\xx/\yy;t) \in \Lambda(\xx/\yy)[t]$ be a palindromic polynomial in $t$ with center of symmetry $n/2$ and with coefficients in $\Lambda(\xx/\yy)$. Then we may write  
$$
f(\xx/\yy;t) = \sum_{k=0}^{\lfloor n/2 \rfloor} \gamma_k(\xx/\yy) t^k (1+t)^{n-2k}.
$$
We say that $f(\xx/\yy;t)$ is \emph{super Schur $\gamma$-positive} if each $\gamma_k(\xx/\yy)$ has a nonnegative expansion in super Schur functions. Notice that $f(\xx/\yy;t) \in \Lambda(\xx/\yy)$ is super Schur $\gamma$-positive if and only if $f(\xx/0;t)$ is Schur  $\gamma$-positive if and only if $f(0/\yy;t)$ is Schur  $\gamma$-positive. This is because the maps from supersymmetric functions to symmetric functions defined by 
$$
s_\lambda(\xx/\yy) \longmapsto s_\lambda(\xx) \quad \mbox{ and }  \quad s_\lambda(\xx/\yy) \longmapsto s_{\lambda'}(\yy) 
$$
are bijective. 
Theorem \ref{gamma-symmetric-h} says that the coefficients of the $\gamma$-Chow polynomials associated to $(e_{i-j}(\xx/\yy))_{i,j=0}^\infty$ are super Schur nonnegative.

\begin{theorem}\label{gamma-symmetric-h}
Let $R=(e_{i-j}(\xx/\yy))_{i,j=0}^\infty$.  Then the polynomials $d_n(t), H_n(t), A_n(t)$ and $G_n(t)$, $n \geq 0$, corresponding to $R$ are all 
super Schur $\gamma$-positive. 

This same is true for the various Chow polynomials in Proposition \ref{trunc-ident} for $R$. 
\end{theorem}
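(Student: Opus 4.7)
My plan is to reduce every statement in the theorem to one key observation: the minors of $R = (e_{i-j}(\xx/\yy))_{i,j=0}^\infty$ that appear in the $\gamma$-polynomial formulas are supersymmetric skew Schur functions, hence expand nonnegatively in supersymmetric Schur functions. This will give super Schur $\gamma$-positivity immediately from Corollary \ref{cor: coefficients of gamma as determinants}. First I would use that corollary to write
\[
\gamma(H_n[R]) = \sum_{\substack{S \subseteq [n-1]\\ S\cup\{0\} \text{ stable}}} \det(R[S\cup\{n\},\{0\}\cup S])\, t^{|S|},
\]
and analogously for $\gamma(G_n[R])$ but with $S$ itself stable; so super Schur positivity of each minor implies super Schur $\gamma$-positivity of $H_n$ and $G_n$.

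Next I would identify each minor with a supersymmetric skew Schur function. For $S = \{s_1<\cdots<s_m\}$, reverse both the row and column orderings (which contributes $(-1)^{2\binom{m+1}{2}}=1$, i.e.\ no sign change) to obtain a matrix of the form $(e_{\tilde r_i - \tilde c_j}(\xx/\yy))_{i,j=1}^{m+1}$ with strictly decreasing $\tilde r = (n,s_m,\ldots,s_1)$ and $\tilde c = (s_m,\ldots,s_1,0)$. Setting $\lambda'_i := \tilde r_i + i - (m+1)$ and $\mu'_j := \tilde c_j + j - (m+1)$, the strict decrease of $\tilde r$ and $\tilde c$ makes $\lambda', \mu'$ weakly decreasing, nonnegative, and $\mu \subseteq \lambda$; the Jacobi-Trudi identity for supersymmetric skew Schur functions then identifies the minor with $s_{\lambda/\mu}(\xx/\yy)$. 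Finally, invoke the Littlewood-Richardson rule $s_{\lambda/\mu}(\xx/\yy) = \sum_\nu c^{\lambda}_{\mu\nu} s_\nu(\xx/\yy)$ with nonnegative $c^\lambda_{\mu\nu}$, which holds because $\{s_\nu(\xx/\yy)\}$ is a basis of $\Lambda(\xx/\yy)$ and the product rule matches the symmetric case.

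For $d_n$ and $A_n$ I would use \eqref{dntruncR} and \eqref{DR-def}, rewriting them as $d_n = tH_{n-1}[\partial_{n-1}R]$ and $A_n = tH_n[\partial_n\overline R]$, and apply Corollary \ref{cor: coefficients of gamma as determinants} to $\partial_{n-1}R$ and $\partial_n\overline R$. The relevant minors of $\partial_{n-1}R$ are, after relabeling, minors of $R$ (with the index $n-1$ skipped). The minors of $\overline R$ that appear along column $0$ satisfy $\overline R_{i,0} = a_{i-1} = R_{i,1}$ for $i \geq 1$, so such a minor of $\overline R$ equals the minor of $R$ with that first column replaced by column $1$; this is still a minor of $R$, hence again a supersymmetric skew Schur function by Jacobi-Trudi. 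The same approach handles the truncated Chow polynomials in Proposition \ref{trunc-ident}, since $R(n,k)$ is itself a submatrix of $R$ and $\overline{R(n,k)}$ behaves as above.

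The main obstacle is purely bookkeeping: verifying, for each of the eight families of polynomials, that the minors produced by Corollary \ref{cor: coefficients of gamma as determinants} are indeed (after relabeling rows/columns and possibly shifting one column) minors of the Toeplitz matrix $R$, and that the reversal and relabeling never introduces a wrong sign. Once the identification with supersymmetric skew Schur functions is nailed down, the appeal to the Littlewood-Richardson rule is immediate.
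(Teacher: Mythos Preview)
Your proposal is correct and follows essentially the same route as the paper: express the $\gamma$-coefficients via Corollary~\ref{cor: coefficients of gamma as determinants}, reduce $d_n, A_n, G_n$ to Chow polynomials of modified matrices via \eqref{dntruncR}, \eqref{aug-matrx}, \eqref{DR-def}, identify the resulting minors as supersymmetric skew Schur functions by Jacobi--Trudi, and conclude with the Littlewood--Richardson expansion. The paper's proof is terser and leaves the bookkeeping (row/column reversal, the structure of $\overline R$, the truncated matrices $R(n,k)$) implicit, whereas you spell these out; your added detail is accurate and helpful.
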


\begin{proof}
By Corollary \ref{cor: coefficients of gamma as determinants}, the coefficients $\gamma_k(\xx/\yy)$ of the $\gamma$-polynomial corresponding to $H_n(t)$ are nonnegative sums of minors of $(e_{i-j}(\xx/\yy))_{i,j=0}^\infty$. The same is true for $\gamma$-polynomial corresponding to $d_n(t),  A_n(t)$ and $G_n(t)$ by \eqref{dntruncR}, \eqref{aug-matrx} and \eqref{DR-def}. Each such minor is equal to a supersymmetric skew Schur function by the Jacobi-Trudy identity. 
Moreover, 
$$
s_{\lambda/\mu}(\xx/\yy) = \sum_{\nu} c_{\mu \nu}^\lambda \cdot s_{\nu}(\xx/\yy),
$$
where $c_{\mu \nu}^\lambda\geq 0$ is a Littlewood-Richardson coefficient, see e.g. \cite[Chapter 2]{Moens}. The theorem follows. 
\end{proof}

\bibliographystyle{amsalpha}
\bibliography{bibliography}

\end{document}